

\documentclass[11pt]{amsart}
\usepackage[parfill]{parskip}    
\usepackage{graphicx, txfonts} 
\DeclareGraphicsRule{.tif}{png}{.png}{`convert #1 `dirname #1`/`basename #1 .tif`.png}
\usepackage{epstopdf}
\usepackage{amsthm, amssymb, amsfonts, amsmath, enumerate}
\usepackage{mathrsfs} 
\usepackage{relsize} 
\usepackage{tikz}

\usepackage{hyperref}

\newcommand{\algt}{\mbox{Alg}_T}

\newcommand{\algtm}{\algt(\M)}

\newcommand{\alg}{\mathsf{Alg}}

\def\Kk{\mathcal{K}}
\def\Ll{\mathcal{L}}

\def\E{\mathscr{E}}

\def\Ee{\mathcal{E}}

\def\Pp{\mathcal{P}}

\def\Set{\mathrm{Set}}

\def\Int{\mathrm{Int}}

\def\Aa{\mathcal{A}}

\def\Tt{\mathcal{T}}
\def\Uu{\mathcal{U}}
\def\Gg{\mathcal{G}}
\def\Ff{\mathcal{F}}

\usepackage{diagrams}
\usepackage[usenames,dvipsnames]{pstricks}
\usepackage{epsfig}
\usepackage{pst-grad} 
\usepackage{pst-plot} 




\usepackage[all,2cell]{xy}
\input xy
\xyoption{all}
\pagestyle{headings}
\UseTwocells

\usepackage{diagrams}

\usepackage[usenames,dvipsnames]{pstricks}
\usepackage{pst-grad} 
\usepackage{pst-plot} 

\newtheorem{theorem}{Theorem}[section]

\newtheorem{proposition}[theorem]{Proposition}

\newtheorem{lemma}[theorem]{Lemma}

\theoremstyle{definition}
\newtheorem{definition}[theorem]{Definition}
\newtheorem{defn}[theorem]{Definition}
\newtheorem{defin}[theorem]{Definition}

\newtheorem{example}[theorem]{Example}

\newtheorem{pro}[theorem]{Proposition}
\newtheorem{lem}[theorem]{Lemma}

\theoremstyle{remark}
\newtheorem{remark}[theorem]{Remark}
\newtheorem{notation}[theorem]{Notation}

\numberwithin{equation}{section}




\newcommand{\M}{\mathcal{M}}

\newcommand{\calC}{\mathcal{C}}

\newcommand{\Alg}{\mathbb{A}{\rm lg}}
\newcommand{\Cat}{\mathbb{C}{\rm at}}

\newcommand{\fC}{\mathfrak{C}}

\newcommand{\Sigmac}{\Sigma_{\fC}}


\renewcommand{\emptyset}{\varnothing}

\renewcommand{\hat}[1]{\widehat{#1}}

\renewcommand{\bar}[1]{\left. #1 \right|}

\DeclareMathOperator{\Sym}{Sym}

\DeclareMathOperator{\CMon}{CMon}



\newcommand{\algom}{\alg(\cat{O};\M)}

\newcommand{\po}{\ar@{}[dr]|(.7){\Searrow}}
\newcommand{\pb}{\ar@{}[dr]|(.3){\Nwarrow}}

\newcommand{\cat}[1]{\mathcal{#1}}

\newcommand{\boxprod}{\mathbin\square}

\usepackage[margin=1.5in]{geometry}
\usepackage[colorinlistoftodos]{todonotes}

\newcounter{todocounter}




\def\Kk{\mathcal{K}}
\def\Ll{\mathcal{L}}

\def\Ee{\mathcal{E}}

\def\Alg{\mathrm{Alg}}

\def\Pp{\mathcal{P}}

\def\colim{\mathrm{colim}}

\def\Set{\mathrm{Set}}

\def\Cat{\mathrm{Cat}}
\def\CAT{\mathrm{CAT}}
\def\Int{\mathrm{Int}}

\def\Aa{\mathcal{A}}

\def\Uu{\mathcal{U}}
\def\Ff{\mathcal{F}}

\newcommand{\MM}{\mathsf{M}}
\newcommand{\ot}{\leftarrow}
\def\XX{\mathbf{X}}
\def\Int{\mathrm{Int}}

\def\WW{{\mathcal{W}}}
\def\db{{\bar{\Delta}}}

\def\Ii{\mathcal{I}}

\def\Jj{\mathcal{J}}

\newcommand{\h}{{\mathcal P}^{{\mathcal P}_{f,g}}}
\newcommand{\ho}{\mathcal{P}^{\mathcal{P}+2\mathcal{A}}}

\newcommand{\cop}{\mathcal{P}^{\mathcal{P}+\mathcal{A}}}
\newcommand{\ts}{\mbox{$\mathbf{p}$}}

\newcommand{\wa}{\mbox{${\bf w}$}}

\newcommand{\qa}{\mbox{${\bf q}$}}
\newcommand{\la}{\mbox{${\bf l}$}}

\newcommand{\LX}{\mbox{${\bf L}^{}$}}

\newcommand{\HS}{\mbox{${\bf T}^{\scriptstyle \tt S}$}}

\newcommand{\HT}{\mbox{${\bf T}^{{\scriptstyle \tt   T}}$}}

\newcommand{\Sm}{\mbox{${\it S}$}}

{\begin{itemize}\tt}%
{\end{itemize}}

\begin{document}

\title{Quasi-tame substitudes and the Grothendieck construction}


\author{Michael Batanin}
\address{Mathematical Institute of the Academy \\ \v{Z}itn\'a 25, 115~67 Prague 1, Czech Republic}
\email{bataninmichael@gmail.com}

\author{Florian De Leger}
\address{Mathematical Institute of the Academy \\ \v{Z}itn\'a 25, 115~67 Prague 1, Czech Republic}
\email{de-leger@math.cas.cz}

\author{David White}
\address{Department of Mathematics \\ Denison University
\\ Granville, OH 43023}
\email{david.white@denison.edu}
\thanks{The authors were supported by RVO:67985840 and Praemium Academiae of M. Markl.}

\maketitle

\begin{abstract}
This paper continues the study of the homotopy theory of algebras over polynomial monads initiated by the first author and Clemens Berger. We introduce the notion of a quasi-tame polynomial monad (generalizing tame ones) and produce transferred model structures (left proper in many settings) on algebras over such a monad. Our motivating application is to produce model structures on Grothendieck categories, which are used in a companion paper to give a unified approach to the study of operads, their algebras, and their modules. We prove a general result regarding when a Grothendieck construction can be realized as a category of algebras over a polynomial monad, examples illustrating that quasi-tameness is necessary as well as sufficient for admissibility, and an extension of classifier methods to a non-polynomial situation, namely the case of commutative monoids.
\end{abstract}

{\small \tableofcontents}

\section{Introduction}

It is often advantageous to have a model structure on a category of algebras over an operad or higher operad. Such model structures were crucial to the recent proof of a general stabilization theorem that implied a strong form of the Baez-Dolan stabilization hypothesis in a wide range of settings \cite{crm,bous-loc-semi,white-oberwolfach,Reedy-paper}. The existing literature has dozens of papers working out model structures on various flavors of operads and their algebras and modules. In this paper, we unify many seemingly disparate approaches, and study the homotopy theory of (a wide variety of flavors of) operads and their algebras/modules via the Grothendieck construction. That is, we begin with a category $\cat{B}$ (e.g., a category of operads), and a functor $\Phi:\cat{B}^{op}\to \CAT$ to the large category of categories (e.g., $\Phi(O)$ can be the category of $O$-algebras, left $O$-modules, $(O,P)$-bimodules if $\cat{B}$ is the category of pairs of operads, etc.). The idea is to find some conditions under which 
the Grothendieck construction $\int \Phi$ has a transferred model structure.

We assume that $\cat{B}$ is a category of algebras of a polynomial monad. As demonstrated  in \cite{batanin-berger}  this  assumption holds for most flavors of categories of operads.  Our main observation in this paper  is that in  such a case, $\int \Phi$ can also be encoded as a category of algebras over a polynomial monad. Unfortunately, the methods of \cite{batanin-berger} are not enough to produce a transferred model structure on $\int \Phi$ (which we call a global model structure), because this polynomial monad is rarely tame \cite[Definition 6.19]{batanin-berger}. We therefore generalize \cite{batanin-berger}, first to produce filtrations associated to any polynomial monad (rather than only to a tame one), and then we introduce the notion of a quasi-tame polynomial monad $P.$ The notion of quasi-tameness should be considered as a homotopy invariant version of the notion of tameness. We then show that the canonical filtration for a quasi-tame monad decomposes in such a way that the transferred model structure on $P$-algebras exists.   Indeed, we conjecture that a lack of quasi-tameness yields obstructions to the existence of the transferred model structure so that quasi-tameness is really the best possible hypothesis on a polynomial monad, and we provide some evidence in support of this conjecture.

In \cite[Theorem 3.9]{companion} we prove that the existence of the global model structure on $\int \Phi$ implies the existence of so-called horizontal and vertical model structures on $\cat B$ and every $\Phi(O)$. Hence, our main result, the existence of the global model structure in very general settings, allows us to recover almost all known previous results regarding model structures on categories of operads and their categories of algebras and modules, as part of the same general theory. Furthermore, \cite[Theorem 3.18]{companion} proves that, when the global model structure is left/right proper, then so are the horizontal and vertical model structures. For this reason, in Section \ref{sec:quasi-tame}, we prove a theorem demonstrating when a category of algebra over a quasi-tame polynomial monad, such as $\int \Phi$, is left/right proper. In \cite[Section 3.4]{companion}, this properness result is used to deduce rectification results for vertical model structures, again unifying results spread across the existing literature, and also proving new results in contexts where rectification has not been studied.

It is sometimes the case that $\int \Phi$ has only the global \textit{semi}-model structure and not a full model structure. In \cite[Section 3.2]{companion}, we treat this situation carefully. With Proposition \ref{prop:poly-for-Gr} in hand, it is easy to produce global semi-model structures, so in this paper we focus on the more difficult question of the existence of a full model structure. Our approach also works beyond the setting of polynomial monads, as we demonstrate in Section \ref{sec:non-poly}. 

We work in the ambient setting of a cofibrantly generated monoidal model category $\M$, and we transfer the (semi-)model structure on $\int \Phi$ from a suitable category of collections built from $\M$. 
Since the (semi-)model structure on $\int \Phi$ arises via the machinery of transfer, it is automatically cofibrantly generated (and combinatorial if $\M$ is). 

The generality of our approach allows for applications in the theory of operads, algebras, and modules for a wide variety of operads including non-symmetric, symmetric, cyclic, modular, hyperoperads, and twisted modular operads. 
As we previously mentioned, we begin with a polynomial monad that encodes $\int \Phi$, then produce a framework to transfer a model structure to $\int \Phi$, then deduce the existence of model structures on $\cat B$ and $\Phi(O)$'s. Thus, this paper can also be thought of as an enriched extension of \cite{batanin-berger}, since it results in model structures on $\algom$ for operads $O$ valued in $\M$, rather than only operads $O$ arising from Set-valued polynomials.

We prove our main transfer theorem in slightly more generality in Section \ref{sec:quasi-tame}, using the language of substitudes, in order that our work be applicable to the homotopy theory of $n$-operads. Our approach also works beyond the setting of polynomial monads and substitudes, if one is able to produce a transferred model structure on $\int \Phi$ through other means. We illustrate in \cite[Section 3.4]{companion} with an application in the setting of the model category of small categories, where algebras over any finitary monad admit transferred model structures.

We now explain the layout of the paper. We start in Section \ref{sec:preliminaries} with a review of basic definitions and notation that we will use throughout the paper. In Section \ref{sec:Gr(T)}, we prove a general result that allows us to encode the Grothendieck construction $\int \Phi$ as a category of algebras over a polynomial monad. We then provide many examples that we will study throughout the paper. 

In Section \ref{sec:quasi-tame}, we introduce the notion of quasi-tame polynomial monads, and we prove that algebras over quasi-tame monads have transferred model structures (which are, moreover, left proper if the ambient model category $\M$ is well-behaved), and that the monads encoding the Grothendieck construction are often quasi-tame. Following \cite{Reedy-paper} and motivated by applications to $n$-operads, we work in the generality of substitudes, which are equivalent to operads with a category of colors. We recall the details in Section \ref{subsec:substitudes}.

In Section \ref{sec:applications}, we provide several applications of the global model structure, including to operads, opetopic sequences, homotopy algebras, and twisted modular operads. Lastly, in Section \ref{sec:non-poly}, we discuss the case when $\int \Phi$ is encoded by a non-polynomial monad. We show how to produce a model structure in the case of commutative monoids and their modules, and how to prove this model structure is left proper. 

\section{Preliminaries} \label{sec:preliminaries}

We assume the reader is familiar with monoidal model categories \cite{hovey-book} and the basics of colored operads (a.k.a. multicategories) \cite{BM07}. Our main results will have applications in the setting of colored non-symmetric operads, and various flavors of colored symmetric operads. Recall that a symmetric operad $P$ is \textit{constant free} if $P(0)=\emptyset$ is the initial object of $\M$. 
Call $P$ \textit{reduced} (or \textit{0-reduced}) if $P(0)=\ast$ is the terminal object of $\M$. Call $P$ \textit{non-reduced} if it has no restriction on $P(0)$. 
In this section we remind the reader of some less standard notions that we will require.

\subsection{Polynomial monads}

In this section we recall the theory of polynomial monads \cite{batanin-berger}. 
Recall that a natural transformation between two functors is called {\it cartesian} if all naturality squares are pullbacks. A \emph{monad} $T$ on a category with pullbacks is called \emph{cartesian} if $T$ preserves pullbacks and both, the multiplication and the unit of the monad $T$, are cartesian natural transformations.

Polynomial functors have been studied extensively in category theory (e.g. in \cite{kock-poly}). A polynomial monad $T$ on a comma category $\Set/I$ is a monad in the 2-category of overcategories, polynomial functors, and cartesian natural transformations. In particular, this implies that $T$ has a decomposition as $ t_! \circ p_* \circ s^*: \Set/I \to \Set/I$ generated by a polynomial:

\[
\xymatrix{
	I & E \ar[l]_s \ar[r]^p & B \ar[r]^t & I
}
\]

We will often think of $I$ as a set of colors, of $B$ (resp. $E$) as a set of operations (resp. marked operations), of $s$ (resp. $t$) as a source (resp. target) morphism, and of $p$ as a projection. We will always assume the morphism $p$ has finite fibers.

\subsection{Internal algebra classifiers} \label{subsec:classifiers}

In order to transfer a model structure to the category $T$-algebras, we must compute a pushout of the following form:

\begin{align} \label{diagram:pushout}
\xymatrix{
	T(K) \po \ar[r] \ar[d] & T(L) \ar[d] \\
	X \ar[r] & P
}
\end{align}

Our tool for analyzing such pushouts is the method of classifiers in \cite{batanin-berger}, which converts the problem of computing pushouts of $T$-algebras into a problem of internal categories. This method then finds a representing object (a category) for the pushout, and computes the colimit of this category via a discrete final subcategory in the case where $T$ is tame (Section \ref{subsec:tame}). This procedure can be viewed as a categorification of the simplicial bar construction. We now recall the relevant terms.

Any polynomial monad $T$ induces a monad on $\Cat$. Such a monad is called a \emph{$2$-monad}. Strict algebras for this $2$-monad are called {\it categorical $T$-algebras}. As usual, categorical $T$-algebras can either be considered as internal categories in $T$-algebras, or as $T$-algebras in categories. They form a $2$-category with respect to strict categorical $T$-algebra morphisms and $T$-natural transformations.

\begin{defin}
Let $A$ be a categorical $\,T$-algebra. An \textit{internal $T$-algebra} $X$ in $A$ is a lax morphism of categorical $T$-algebras $X:1\to A,$ where $1$ is the terminal categorical $T$-algebra. The internal $T$-algebras in $A$ form a category $\Int_T(A)$ and this correspondence defines a $2$-functor:
$$\Int_T:\Alg_T(\Cat)\to\Cat.$$
\end{defin}

\begin{theorem}[\cite{batanin-eckmann-hilton}]\label{bar1} 
The $2$-functor $\,\Int_T$ is represented by a categorical $T$-algebra $\HT.$ The underlying categorical object of  $\HT$ is the $2$-truncated simplicial object
\begin{equation}\label{HT}
	\xygraph{!{0;(2,0):(0,1)::} {T^31}="p0" [r] {T^21}="p1" [r] {T1}="p2"
		"p2":"p1"|-{T\eta^T_1} "p1":@<1.5ex>"p2"^-{\mu^T_1} "p1":@<-1.5ex>"p2"_-{T(!)} "p0":@<1.5ex>"p1"^-{\mu^T_{T1}} "p0":"p1"|-{T\mu^T_1} "p0":@<-1.5ex>"p1"_-{T^2(!)}}
\end{equation}
of the simplicial bar-resolution $B(T,T,1)_\bullet$ of the terminal categorical $T$-algebra $1$.
\end{theorem}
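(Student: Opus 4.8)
The plan is to prove Theorem~\ref{bar1} by an explicit construction, guided by the lax-morphism-classifier formalism for $2$-monads. By definition an internal $T$-algebra in $A$ is a lax morphism $1\to A$, so $\Int_T(A)$ is the category $\Alg_T(\Cat)_{\ell}(1,A)$ of lax morphisms out of the terminal categorical $T$-algebra (here $\Alg_T(\Cat)$ carries strict morphisms, the subscript $\ell$ indicating lax ones). Thus a representing object for $\Int_T$ is precisely a lax-morphism classifier of $1$, and general $2$-dimensional monad theory computes such a classifier as a codescent object of the bar resolution $B(T,T,1)_\bullet$; the content of the theorem is the sharper claim that this codescent object has underlying categorical object the $2$-truncated diagram \eqref{HT}. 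I would prove this in three moves: (i) check that \eqref{HT} is a genuine category object internal to $\Alg_T(\Cat)$; (ii) exhibit a universal internal $T$-algebra $u\in\Int_T(\HT)$; (iii) show that $f\mapsto f\cdot u$ defines a $2$-natural bijection $\Alg_T(\Cat)(\HT,A)\xrightarrow{\ \sim\ }\Int_T(A)$. Throughout I work with underlying categories, transporting the $T$-structure along $F_T\dashv U_T$ with $U_TF_T=T$; the faces and degeneracies appearing in \eqref{HT} are visibly assembled from $\mu^T$, $\eta^T$, $T(!)$, hence are maps of $T$-algebras.

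For (i), the simplicial identities are routine, and the one substantive point is the Segal condition at level two, $T^3 1\cong T^2 1\times_{T1}T^2 1$ with the two legs $\mu^T_1$ and $T(!)$, together with the fact that $T\eta^T_1$ splits both faces, so that \eqref{HT} assembles into a category object (and not merely a semicategory or a pseudo-category). Here I would use that $T$ is cartesian: the naturality square of $\mu^T$ at the map $T1\to 1$ is a pullback, and this is exactly the Segal square, while the remaining category-object axioms reduce to the monad axioms for $T$. For (ii), the underlying object of $u$ is the image of the generator under $\eta^T_1\colon 1\to T1=\Ob\HT$, and the lax structure $2$-cell $\overline{u}$ has component at $\xi\in T^2 1$ the corresponding morphism of $\HT$ (so $\overline{u}$ is built from $\eta^T_{T1}$); its unit and associativity coherence axioms again unwind to the monad unit and associativity for $T$. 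For instance, when $T$ is the free-monoid monad this recovers the classical fact that the classifier for internal monoids in strict monoidal categories is the augmented simplex category, equipped with its generic monoid.

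For (iii), one direction is postcomposition, $f\mapsto f\cdot u$. For the inverse, given an internal $T$-algebra $(x,\overline{x})$ in $A$ with $T$-structure $a\colon T|A|\to|A|$, set $f$ on objects to be $b\in T1\mapsto a(Tx(b))$ and $f$ on morphisms to be $a\cdot T\overline{x}$ (its component at $\xi\in T^2 1$ runs from $a(Tx(\mu^T_1\,\xi))$ to $a(Tx(T(!)\,\xi))$, using the algebra and $\mu^T$-naturality identities); strictness of $f$ then dictates its value on the degree-two part of \eqref{HT}, functoriality of $f$ (compatibility with the composition face $T\mu^T_1$) is forced by the associativity axiom for $\overline{x}$, and unitality by the unit axiom. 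A direct computation shows these two assignments are mutually inverse, and $2$-naturality follows from the same bookkeeping applied to $T$-natural transformations. The main obstacle — the step that requires genuine care — is exactly this correspondence: one must show that the \emph{strictness} of $f\colon\HT\to A$ is equivalent to endowing $x$ with a lax structure cell \emph{together with its two coherence axioms and nothing more}, i.e., that the $2$-truncation of $B(T,T,1)_\bullet$ imposes precisely the right relations — neither too few (no higher coherences are needed, since lax morphism data is $2$-dimensional) nor too many. Alternatively one could stay abstract: invoke the codescent-object description of $Q1$ and verify that, because $1$ is terminal, this codescent object is already presented by the diagram \eqref{HT}, with cartesianness of $T$ once more securing the required Segal conditions; but the hands-on argument above seems the most transparent.
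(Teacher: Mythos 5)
This theorem is recalled here by citation to Batanin's \emph{Eckmann--Hilton argument and higher operads} paper rather than proved, so there is no proof in this paper to compare against directly. Your proposal is nonetheless a correct reconstruction of the argument, and in spirit it matches the treatment in the cited reference: internal $T$-algebras in $A$ are lax morphisms $1\to A$, the representing object is the lax-morphism classifier of $1$, computed as the codescent object of the bar resolution $B(T,T,1)_\bullet$, and for a cartesian (in particular polynomial) monad that codescent object collapses to the $2$-truncated diagram \eqref{HT} because the Segal square is precisely the $\mu$-naturality square at $!:T1\to 1$, which is a pullback since $\mu^T$ is cartesian.

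Two small remarks on accuracy, neither affecting the argument. First, in part (ii) you write that the lax cell $\overline u$ has ``component at $\xi\in T^21$''; components of $\overline u$ are indexed by $\xi\in T1$ (objects of $T\ast$), with the component $\overline u_\xi$ being the \emph{element} $\eta^T_{T1}(\xi)\in T^21$, a morphism of $\HT$ from $\xi$ to $\eta^T_1(\ast)$ by the unit law and naturality of $\eta^T$. Second, it would be worth spelling out in part (i) that each level $T^{n+1}1$ is a free $T$-algebra and that the face/degeneracy maps are $T$-algebra morphisms (the $\mu$-faces by associativity of $\mu$, the others because they are in the image of the free functor), so that \eqref{HT} is a category object \emph{in} $\Alg_T$, not just in $\Cat$; the composition face $T\mu^T_1$ in particular must be an algebra map for $\HT$ to be a categorical $T$-algebra. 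With those points made explicit, the three-step plan — category object, universal internal algebra, $2$-natural bijection — is the expected proof and is sound.
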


The categorical $T$-algebra $T^T$ classifies internal $T$-algebras, i.e. every morphism of categorical $T$-algebras $1 \to \calC$ (that picks out internal $T$-algebras in $\calC$) corresponds one-to-one with a morphism $T^T \to \calC$ of categorical $T$-algebras. This categorical $T$-algebra $\HT$ will be called the \emph{internal algebra classifier} of $T$ because of its universal property.

There is an analogous formula in the non absolute case. Namely, for a morphism of polynomial monads $f: S \to T$, which is given on colors by a function $\phi: J \to I$, we have the following commutative square of adjunctions:  
\[
\xymatrix{
	\Alg_S(\Set) \ar@<-.5ex>[rr]_-{f_!} \ar@<-.5ex>[dd]_-{\scriptstyle \mathcal{U}_S} && \Alg_T(\Set) \ar@<-.5ex>[ll]_-{f^*} \ar@<-.5ex>[dd]_-{\scriptstyle \mathcal{U}_T} \\
	\\
	\Set^J \ar@<-.5ex>[rr]_-{\phi_!} \ar@<-.5ex>[uu]_-{\scriptstyle \mathcal{F}_S} && \Set^I \ar@<-.5ex>[ll]_-{\phi^*} \ar@<-.5ex>[uu]_-{\scriptstyle \mathcal{F}_T}
}
\]
Here $\phi^*$ is the  restriction functor induced by $\phi: J \to I$ and $\phi_!: \Set^J\to \Set^I$ is its left adjoint given by coproducts over fibers of $\phi$.

\begin{defin}Let $f: S \to T$ be a morphism of polynomial monads. Let $A$ be a categorical $T$-algebra. An internal $S$-algebra in $A$ is a lax morphism of categorical $S$-algebras $X:1\to f^*(A).$ There is a $2$-functor
	$$\Int_S: \Alg_T(\Cat)\to\Cat$$which associates to $A$ the category of internal $S$-algebras in $A.$ \end{defin}

\begin{theorem}[\cite{batanin-eckmann-hilton}]\label{TST}
The $2$-functor $\,\Int_S$ is represented by a categorical $T$-algebra $\HS.$ The underlying categorical object of $\HS$ is the $2$-truncated simplicial object
	\begin{equation}\label{objects}
	\xygraph{!{0;(2,0):(0,1)::} {\mathcal{F}_T \phi_! S^2(1)}="p0" [r] {\mathcal{F}_T \phi_! S(1)}="p1" [r] {\mathcal{F}_T \phi_!(1)}="p2"
"p2":"p1"|-{} "p1":@<1.5ex>"p2"^-{} "p1":@<-1.5ex>"p2"_-{} "p0":@<1.5ex>"p1"^-{} "p0":"p1"|-{} "p0":@<-1.5ex>"p1"_-{}}
\end{equation}
of the two-sided bar-construction $B(\mathcal{F}_T \cdot \phi_!,S,1)_\bullet$ where $1$ is a terminal categorical $S$-algebra.
\end{theorem}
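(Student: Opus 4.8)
The plan is to reduce this to the absolute case (Theorem \ref{bar1}) by a standard adjunction-and-Yoneda argument, then identify the represented object explicitly as the two-sided bar construction. First I would recall that, by Theorem \ref{bar1} applied to the monad $S$ on $\Set^J$, the $2$-functor $\Int_S \colon \Alg_S(\Cat) \to \Cat$ sending a categorical $S$-algebra $B$ to its category of internal $S$-algebras is represented by $\HT$ (with $T$ replaced by $S$), whose underlying categorical object is the $2$-truncation of $B(S,S,1)_\bullet$. Now, given a categorical $T$-algebra $A$, by definition the internal $S$-algebras in $A$ are the internal $S$-algebras in the categorical $S$-algebra $f^*(A)$, i.e. $\Int_S(A) = \Int_S^{\mathrm{abs}}(f^*(A))$. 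So the task is to compute a representing object for the composite $2$-functor $f^* $ followed by the absolute $\Int_S$.

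Next I would use the adjunction $f_! \dashv f^*$ at the level of categorical algebras (the $2$-monad maps on $\Cat$ induced by $f$ give an adjunction $\Alg_S(\Cat) \leftrightarrows \Alg_T(\Cat)$ just as in the square of adjunctions displayed above), together with the absolute representability:
\[
\Alg_T(\Cat)(f_!(\HS_{\mathrm{abs}}), A) \;\cong\; \Alg_S(\Cat)(\HS_{\mathrm{abs}}, f^*(A)) \;\cong\; \Int_S^{\mathrm{abs}}(f^*(A)) \;=\; \Int_S(A),
\]
where $\HS_{\mathrm{abs}}$ denotes the internal $S$-algebra classifier from Theorem \ref{bar1}. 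Hence $\Int_S$ is represented by $\HS := f_!(\HS_{\mathrm{abs}})$. It remains to compute the underlying categorical object. Since $f_!$ on categorical algebras is computed on underlying objects by $\phi_!$ followed by the free-$T$-algebra functor $\mathcal{F}_T$ (this is the ``base change'' formula for free algebras along a morphism of polynomial monads, and $f_!$ commutes with the colimits defining the bar construction), applying $\mathcal{F}_T \phi_!$ levelwise to $B(S,S,1)_\bullet$ and truncating yields exactly the displayed $2$-truncated simplicial object with levels $\mathcal{F}_T\phi_! S^k(1)$, which is the $2$-truncation of $B(\mathcal{F}_T\phi_!, S, 1)_\bullet$. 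The face and degeneracy maps are induced from those of $B(S,S,1)_\bullet$ together with the unit $S1 \to f^* f_! S 1$ (equivalently the action map), giving the two-sided bar construction in the claimed form.

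The main obstacle I expect is the careful bookkeeping in the penultimate step: verifying that $f_!$ on \emph{categorical} $T$-algebras really is computed on underlying categorical objects by $\mathcal{F}_T \circ \phi_!$ applied levelwise, and that this is compatible with $2$-truncation. This requires that the induced $2$-monad on $\Cat$ from $T$ is again ``polynomial enough'' that the base-change square of adjunctions lifts from $\Set$-valued algebras to $\Cat$-valued algebras, and that $f_!$ (a left adjoint) commutes with the reflexive coequalizers and geometric realization implicit in the bar construction, as well as with the truncation functor. Once one knows the $2$-monad on $\Cat$ is cartesian and that $\mathcal{F}_T\phi_!$ computes $f_!$ on underlying objects, the identification of faces and degeneracies is formal, coming from naturality of the bar construction and the triangle identities for $f_! \dashv f^*$.
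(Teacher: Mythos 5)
Your broad strategy is the natural one and is sound in outline: reduce to the absolute Theorem~\ref{bar1} applied to $S$, use the $2$-adjunction $f_!\dashv f^*$ between strict categorical $S$- and $T$-algebras together with Yoneda to conclude $\HS \cong f_!(S^S)$, and finally use that $f_!$, as a left $2$-adjoint, preserves the codescent/bar construction presenting $S^S$. (Note the paper itself does not prove Theorem~\ref{TST}; it is recalled from~\cite{batanin-eckmann-hilton}, so the comparison here is with the standard argument rather than an in-text proof.)

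However, the step you yourself flagged as the ``main obstacle'' contains a genuine error. You assert that ``$f_!$ on categorical algebras is computed on underlying objects by $\phi_!$ followed by $\mathcal{F}_T$,'' i.e.\ that $\mathcal{U}_T f_! \cong \mathcal{F}_T\phi_!\,\mathcal{U}_S$. This is false: taking $f$ to be the identity would give $\mathcal{U}_T \cong \mathcal{F}_T\mathcal{U}_T$, i.e.\ every algebra's underlying object is the free algebra on itself. The correct adjoint-square identity, obtained by passing to left adjoints in $\mathcal{U}_S f^* = \phi^* \mathcal{U}_T$, is $f_!\circ\mathcal{F}_S \cong \mathcal{F}_T\circ\phi_!$. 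This matters because it propagates into your final identification: ``applying $\mathcal{F}_T\phi_!$ levelwise to $B(S,S,1)_\bullet$'' would yield levels $\mathcal{F}_T\phi_!\,S^{n+1}(1)$ (since $B(S,S,1)_n = S^{n+1}(1)$ on underlying objects), which is off by one power of $S$ from both the displayed answer $\mathcal{F}_T\phi_!\,S^n(1)$ and what you write in the same sentence. The correct computation instead observes that $B(S,S,1)_n$ is the \emph{free} $S$-algebra $\mathcal{F}_S\bigl(S^n(1)\bigr)$, so that
\[
f_!\bigl(B(S,S,1)_n\bigr)\;=\;f_!\,\mathcal{F}_S\bigl(S^n(1)\bigr)\;\cong\;\mathcal{F}_T\,\phi_!\bigl(S^n(1)\bigr)\;=\;B\bigl(\mathcal{F}_T\phi_!,S,1\bigr)_n.
\]
In words: $f_!$ \emph{replaces} the outermost $\mathcal{F}_S$ with $\mathcal{F}_T\phi_!$; it does not post-compose $\mathcal{F}_T\phi_!$ onto the underlying objects. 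With this replacement, the rest of your argument (preservation of codescent objects by the left $2$-adjoint, identification of faces and degeneracies, and compatibility with $2$-truncation) goes through and recovers the theorem.
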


This categorical $T$-algebra $T^S$ classifies internal $S$-algebras in a categorical $T$-algebra $\calC$, and hence is called the \emph{internal $S$-algebra classifier} of $T$. 

The method of classifiers \cite{batanin-berger} provides filtrations essential for transferring (semi-)model structures to $\algtm$, whenever the monad $T$ is polynomial. We build on this section and generalize those filtrations in Section \ref{sec:quasi-tame} below.

Every polynomial monad $T$ has an associated $\Sigma$-cofibrant set-valued colored operad (with color set $I$) $O_T$ with operations given by elements $b\in B$ and inputs given by $p^{-1}(b)$. While algebras over the monad $T$ are the same as $O_T$-algebras, the language of polynomials makes it easy to prove that the associated operad is $\Sigma$-cofibrant, and gives a very explicit description of the trees used to encode operations. The use of polynomials avoids the need to discuss symmetric group actions; they only arise in the passage to $O_T$. Hence, many constructions (e.g. the plus construction \cite{batanin-kock-joyal}) are simpler with polynomial monads than with colored operads.

\subsection{Substitudes} \label{subsec:substitudes}

In this section, we recall the definition of $\Set$-valued substitudes, which can encode algebraic settings that Set-valued polynomials cannot. For more details, we refer the reader to Section 5 of \cite{Reedy-paper}. Throughout, $\Sigma_n$ denotes the symmetric group on $n$ letters.

\begin{definition} \label{defn:substitude}
A {\em substitude\/}  $(P,A)$ is a small 
category  $A$ together
with a sequence of functors:
$$
P^n: \underbrace{A^{op}\otimes\cdots\otimes A^{op}}_{n-times}\otimes  
A \rightarrow \Set, \  n\ge 0,\ $$ 
equipped with suitable substitution operations, unit morphisms $\eta: A(a_1,a_2)\rightarrow P^1(a_1;a_2);$, and $\Sigma_n$-actions on $P^n(a_1,\ldots,a_n;a)$.

A morphism of substitudes $(P,A)\to (Q,B)$ is a pair
$(f,g)$ where $g:A\to B$ is a functor and $f$ is a sequence of natural transformations
$$f^n:P^n\to g^*(Q^n)$$
and $g^*$ is the restriction functor along $(g^{op})^n\otimes g$ which respects substitution and unit operations. 
\end{definition}

To simplify notation we often omit the superscript in the notation $P^n(a_1,\ldots,a_n; a).$ Notice that $P^1$ is a monad on $A$ in the bicategory of profunctors. The Kleisli category of this monad is called {\it the underlying category of $P.$}

Any colored operad $\E$ in $\Set$  can be naturally considered as a substitude in several different ways, e.g., taking $A$ equal to the category of all unary operations in $\E$ and $P(\E)(a_1,\ldots,a_n,a) = \E(a_1,\ldots,a_n,a).$ The category $A = U(\E)$ is
also called the underlying category of the colored operad $\E.$ In general, a substitude is a colored operad $\E$ together with a small category
$A$ and an identity-on-objects functor $\eta: A \rightarrow U(\E)$. 

Yet another possibility is to consider the full subcategory of substitudes $(P,A)$ for which $A$ is a discrete category. This subcategory  is isomorphic to the category of operads.
This full inclusion functor has a right adjoint that takes a substitude $(P,A)$ to the substitude $(P_0,A_0),$ where $A_0$ is the  maximal discrete subcategory of $A$ and $P_0$ is the restriction of $P$ to $A_0.$ 

Let $\MM A$ be the free strict monoidal category on $A$ and $\Sm A$ be the free strict symmetric monoidal category on $A.$ The universal property of $\MM A$ yields a canonical monoidal functor $\epsilon: \MM A\rightarrow \Sm A.$

\begin{defin} \label{defn:free action} A substitude $({P},A)$ is called $\Sigma$-free if there exists a bimodule 
\begin{equation}\label{E'} {d}(P):(\MM A)^{op} \times A \rightarrow \Set,\end{equation}
such that $P$ is the left Kan extension of $d(P)$ along 
   $$\epsilon\times 1:(\MM A)^{op} \times A \rightarrow (\Sm A)^{op}\times A .$$ 
\end{defin}

\subsection{Transferring model structures} \label{subsec:transfer}

In this section, we recall several relevant model categorical conditions from \cite{batanin-berger} that we shall need, in order to transfer left proper model structures to categories of $T$-algebras. We also generalize several results from \cite{batanin-berger}. Analogously, in Section \ref{sec:quasi-tame}, we generalize the filtration results of \cite{batanin-berger} to hold for all polynomial monads, rather than just tame polynomial monads. The condition below allows us to use the small object argument in categories of algebras.

\begin{defn} \label{defn:compactly-generated}
Let $\cat K$ be a class of morphisms. A cofibrantly generated model category $\M$ is called \textit{$\cat K$-compactly generated}, if all objects are small relative to $\cat K$-cell and if the weak equivalences are closed under filtered colimits along morphisms in $\cat K$. $\M$ is called \textit{compactly generated} if it is $\cat K$-compactly generated for $\cat K$ the monoidal saturation of the generating cofibrations.
\end{defn}

The next condition, due to Schwede and Shipley \cite{SS00}, is required to transfer a model structure to a category of monoids (and, more generally, to algebras over the tame polynomial monads recalled in Section \ref{subsec:tame}).

\begin{defn} \label{defn:monoid-axiom}
Given a class of morphisms $\calC$ in $\M$, let $\calC \otimes \M$ denote the class of morphisms $f \otimes id_X$ where $f\in \calC$ and $X\in \M$. A model category is said to satisfy the \textit{monoid axiom} if every morphism in (Trivial-Cofibrations $\otimes \M)$-cell is a weak equivalence. 
\end{defn}

Recall that a monad $T$ on $\M$ is called \emph{finitary} if $T$ preserves filtered colimits, or equivalently, if the forgetful functor $U_T:\Alg_T\to\M$ preserves filtered colimits. Here, $\Alg_T$ denotes the category of $T$-algebras and $F_T \dashv U_T : \Alg_T \to \M$ the free-forgetful adjunction. Thus $T=U_TF_T$ and $F_T(X)=(TX,\mu_X)$ where $\mu:T^2\to T$ is the multiplication of the monad $T$.

Recall that the theory of left Bousfield localization of model categories requires left properness (though, see \cite{bous-loc-semi} for results in non-left proper settings). In operadic contexts, applications are explored in \cite{batanin-white-eilenberg-moore, white-localization, white-yau, white-yau-co}. In order to prove that transferred model structures are left proper, one must analyze diagrams of the following form, and show that, whenever $u:K\to L$ is a cofibration in $\M$ and $f:A \to B$ is a weak equivalence in $\algtm$, then $A[u,\alpha] \to B[u,f\alpha]$ is a weak equivalence in $\algtm$:
\begin{equation*}\label{diagram:cell-extension-for-left-proper}
\xymatrix{F_T(K) \ar[d]_{F_T(u)} \ar[r]^{F_T(\alpha)} \po & F_T(U(A)) \ar[d] \ar[r] \po & A\ar[d] \ar[r]^f \po & B\ar[d] \\
F_T(L) \ar[r] & F_T(P) \ar[r] & A[u,\alpha] \ar[r] & B[u,f\alpha]}
\end{equation*}
Doing so requires the following definitions. 

\begin{defn} \label{defn:h-cofibration} 
A morphism of free $T$-algebras $F_T(u):F_T(K)\to F_T(L)$ is an \emph{$h$-cofibration} if in any diagram like (\ref{diagram:cell-extension-for-left-proper}), in which $f$ is a weak equivalence, the induced morphism $A[u,\alpha]\to B[u,f\alpha]$ is again a weak equivalence. We shall say that $F_T(u)$ is a \emph{relative $h$-cofibration} if the latter preservation property only holds for those $f$ for which $U_T(A)$ and $U_T(B)$ are cofibrant in $\M$.
\end{defn}

\begin{defn} \label{defn:h-monoidal}
A monoidal model category is \textit{$h$-monoidal} if for each (trivial) cofibration $f:X\to Y$ and each object $Z$, the morphism $f\otimes Z$ is a (trivial) $h$-cofibration, i.e. pushouts along this morphism preserve weak equivalences. Formally, this means that in any diagram as below, in which both squares are pushout squares and $w$ is weak equivalence, then $w'$ is also a weak equivalence:
\[
\xymatrix{X\otimes Z \ar[r] \ar[d]_{f\otimes Z} & A \ar[r]^w \ar[d] & B\ar[d]\\
Y \otimes Z \ar[r] & A' \ar[r]_{w'} & B'}
\]

$\M$ is \textit{strongly $h$-monoidal} if in addition the weak equivalences are closed under tensor product.
\end{defn}

These conditions on $\M$ are verified in \cite{batanin-berger} to hold for effectively all model categories of interest. Additionally, monads for a wide variety of operadic situation are analyzed and proven to be tame. We do not need all the results of \cite{batanin-berger}, but we do wish to note that the monad for monoids is tame, the monad for non-symmetric operads is tame (but we will see that the same is not true for the Grothendieck construction for pairs $(O,A)$ where $O$ is a non-symmetric operad and $A$ is an $O$-algebra), the monad for non-reduced symmetric operads is not tame, and the monad for Com is not even polynomial. 

When the base model category $\M$ satisfies the monoid axiom but is not strongly $h$-monoidal, one can often prove that $\algtm$ is still \textit{relatively left proper}:

\begin{defn} \label{defn:relatively-left-proper}
A model structure on $\algtm$ is \textit{relatively left proper} if, for every weak equivalence $f:R\to S$, and any cofibration $R\to R'$, if $R$ and $S$ forget to cofibrant objects in $\M$ then the pushout $R' \to R' \cup_R S$ is a weak equivalence of $T$-algebras.
\end{defn}

Recall (Proposition 2.12 in \cite{batanin-berger}) that the free $T$-algebra functor takes cofibrations to relative $h$-cofibrations if it takes cofibrations with cofibrant domain to relative $h$-cofibrations.

With these definitions in hand, we are ready to turn to the question of transferred model structures. We include the semi-model category case \cite[Section 2]{Reedy-paper} in the following, because it is needed in \cite{companion}.

\begin{defin} \label{admissible} 
Let $\Ee$ be a model category, $\mathcal{W}$ its class of weak equivalences, and $\Kk,\Ll$ be two saturated class in $\Ee$ such that $\Ll \subset \mathcal{W} \cap \Kk .$  A monad $T$ on $\Ee$ is said to be \emph{$(\Kk,\Ll)$-admissible} if for each cofibration (resp. trivial cofibration) $u:X\to Y$ and each morphism of $T$-algebras $\alpha:F_T(X)\to R$, the pushout in $\Alg_T$
\begin{equation}\label{extension} 
\xygraph{!{0;(2,0):(0,.5)::} 
{F_T(X)}="p0" [r] {R}="p1" [d] {R[u,\alpha]}="p2" [l] {F_T(Y)}="p3" 
"p0":"p1"^-{\alpha}:"p2"^-{u_{\alpha}}:@{<-}"p3"^-{}:@{<-}"p0"^-{F_T(u)} 
"p2" [u(.4)l(.3)] (-[d(.2)],-[r(.15)]) 
}
\end{equation}
yields a $T$-algebra morphism $u_{\alpha}: R\to R[u,\alpha]$ whose underlying morphism $U_T(u_{\alpha})$ belongs to $\Kk$ (resp. to $\Ll$). We say $T$ is \textit{$(\cat K, \Ll)$-semi admissible} (resp. $(\cat K, \Ll)$-semi-admissible over $\M$) if this holds for pushouts into $T$-algebras $R$ that are cofibrant (resp. cofibrant in $\M$). When we say $T$ is \textit{admissible} (resp. semi-admissible) we mean it is $(\cat K, \Ll)$-(semi-)admissible for some $(\cat K, \Ll)$.
\end{defin}

\begin{lem} 
If $\Ee$ is cofibrantly generated then a monad $T$ is $(\Kk,\Ll)$-admissible if and only if the morphism $U_T(u_{\alpha})$ belongs to $\Kk$ (resp. to $\Ll$) for any generating cofibration (resp. generating trivial cofibration) $u.$
\end{lem}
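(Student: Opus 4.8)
The plan is to reduce the general case of the admissibility condition to the case of generating (trivial) cofibrations by a retract-and-transfinite-composition argument, exactly as in the standard Kan-style recognition of cofibrantly generated model categories. Recall that in a cofibrantly generated model category $\Ee$, every cofibration is a retract of a transfinite composition of pushouts of coproducts of generating cofibrations, and similarly for trivial cofibrations with the generating trivial cofibrations. So I would take an arbitrary cofibration $u\colon X\to Y$, write it as such a retract, and chase through the free-algebra functor $F_T$ (which is a left adjoint and hence preserves colimits, coproducts, and pushouts) to express the cell extension $u_\alpha\colon R\to R[u,\alpha]$ as a retract of a transfinite composite of cell extensions coming from generating cofibrations, then invoke that $\Kk$ is saturated (closed under pushout, transfinite composition, coproduct, and retract) to conclude $U_T(u_\alpha)\in\Kk$.

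First I would handle coproducts of generating cofibrations: if $u = \bigsqcup_i u_i$ with each $u_i$ a generating cofibration, then $F_T(u)$ is the coproduct $\bigsqcup_i F_T(u_i)$ in $\Alg_T$, and the pushout along $F_T(u)$ into $R$ can be built as an iterated pushout along the $F_T(u_i)$'s, so $U_T(u_\alpha)$ is (a transfinite composite of) maps in $\Kk$ by hypothesis and saturation. Next, pushouts: if $u'$ is a pushout of $u$ along some map $X\to X'$ in $\Ee$, then $F_T(u')$ is the pushout of $F_T(u)$ along $F_T(X)\to F_T(X')$, and any cell extension by $u'$ into $R$ is also a cell extension by $u$ into $R$ (precompose the attaching map through $F_T(X)\to F_T(X')\to R$), using the pasting law for pushout squares; hence $U_T(u'_\alpha) = U_T(u_\alpha)$ for the corresponding $\alpha$, which already lies in $\Kk$. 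Then transfinite composition: given $u = \colim_{\beta<\lambda} u_\beta$ a composite where each stage is a pushout of a coproduct of generators, the cell extension by $u$ into $R$ is the transfinite composite of the cell extensions by the successive stages, each of which we have just controlled, so $U_T(u_\alpha)$ is a transfinite composite of maps in $\Kk$, hence in $\Kk$. Finally retracts: a retract of $u$ induces a retract of the associated cell extension, and $\Kk$ is closed under retracts. The trivial-cofibration case with $\Ll$ in place of $\Kk$ is verbatim the same, since $\Ll$ is also saturated. The converse direction is trivial: generating (trivial) cofibrations are in particular (trivial) cofibrations.

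The \textbf{main subtlety} — and the step I would write out most carefully — is the compatibility of pushouts in $\Alg_T$ with the formation of cell extensions along a composite or a pushout of $u$: one must verify that the pasting and decomposition of pushout squares in $\Alg_T$ behaves correctly, i.e., that "attaching a $u$-cell and then a $u'$-cell" agrees with "attaching a $(u\text{ then }u')$-cell," and that the attaching map for a pushout $u'$ of $u$ can genuinely be factored through $F_T$ applied to the pushout corner. This is where the left-adjointness of $F_T$ and the elementary pasting lemmas for pushouts do all the work; there is no homotopy-theoretic input needed, only that $\Kk$ and $\Ll$ are saturated classes and that $\Ee$ is cofibrantly generated so that the relevant cellular presentations of cofibrations and trivial cofibrations exist. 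I do not anticipate any serious obstacle — this is the routine "it suffices to check admissibility on generators" lemma — but getting the bookkeeping of the transfinite composition of cell extensions right is the place where care is required.
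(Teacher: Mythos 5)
Your proposal is correct and follows essentially the same approach as the paper's proof: express an arbitrary (trivial) cofibration as a retract of a cellular morphism, use that $F_T$ preserves colimits, and conclude from the saturation of $\Kk$ (resp.\ $\Ll$). You have merely expanded the bookkeeping steps that the paper leaves implicit.
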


\begin{proof} 
The forward implication is trivial. To prove the inverse implication it is enough to observe that in a cofibrantly generated category any cofibration is a retract of a cellular morphism. Since $F_T$ preserves colimits and $\Kk$ is saturated the result follows. The same argument is valid for trivial cofibrations and class $\Ll.$ 
\end{proof}

The definition above isolates precisely what is needed to transfer a model structure or semi-model structure to the category of $T$-algebras. We recall that a category has a semi-model structure (resp. semi-model structure over $\Ee$) if it satisfies all the model structure axioms, but only admits the (trivial cofibration, fibration) factorization, and the lifting of a trivial cofibration against a fibration, for morphisms with cofibrant domain (resp. domain that becomes cofibrant in $\Ee$) \cite[Definition 2.1]{companion}. We will not need semi-model categories in this paper, but the following result is used in \cite{companion}.

\begin{theorem}\label{thm:admissible-implies-(semi)-models} \label{admissible}
For any finitary $(\Kk,\Ll)$-admissible (resp. semi-admissible, resp. semi-admissible over $\Ee$) monad $T$ on a $\Kk$-compactly generated model category $\Ee$, the category of $T$-algebras admits a transferred cofibrantly generated model structure (resp. semi-model structure, resp. semi-model structure over $\Ee$). This (semi-)model structure is (relatively) left proper if and only if the free $T$-algebra functor takes cofibrations in $\Ee$ to (relative) $h$-cofibrations in $\Alg_T$.
\end{theorem}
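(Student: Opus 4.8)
The plan is to run the standard transfer argument for model structures, inserting $(\Kk,\Ll)$-admissibility and $\Kk$-compact generation precisely where the small object argument requires them, and then to read off the properness statement from a cell-by-cell induction that reduces it to the defining property of an $h$-cofibration. First I would equip $\Alg_T$ with the candidate structure: a morphism $f$ is a weak equivalence (resp.\ fibration) exactly when $U_T(f)$ is one in $\Ee$, and the cofibrations are the morphisms with the left lifting property against the trivial fibrations. Bicompleteness is automatic ($U_T$ is monadic, so it creates limits, and $\Alg_T$ is cocomplete because $T$ is finitary), and the retract and two-out-of-three axioms hold since $U_T$ creates weak equivalences. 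The real content is the two weak factorisation systems, for which I would apply the small object argument to the sets $F_T(I)$ and $F_T(J)$, where $I$ and $J$ are the generating cofibrations and trivial cofibrations of $\Ee$. Because $\hom_{\Alg_T}(F_T(A),-)\cong\hom_\Ee(A,U_T(-))$ and $U_T$ preserves filtered colimits, the smallness of the domains of $I$ and $J$ relative to $\Kk$-cell (part of Definition~\ref{defn:compactly-generated}) will yield the smallness of the domains of $F_T(I)$ and $F_T(J)$ relative to $F_T(I)$-cell and $F_T(J)$-cell, \emph{as soon as} $U_T$ sends relative $F_T(I)$- and $F_T(J)$-cell complexes to relative $\Kk$-cell complexes --- and this is exactly what $(\Kk,\Ll)$-admissibility provides, since each single cell attachment $R\to R[u,\alpha]$ forgets to a morphism lying in $\Kk$ (resp.\ in $\Ll\subseteq\Kk$) and $\Kk$ is saturated.

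With the small object argument available, I would verify the lifting axioms. One of them is the definition of ``cofibration''. For the other, I would first prove that every relative $F_T(J)$-cell complex is a weak equivalence: by admissibility each attachment forgets to a morphism in $\Ll\subseteq\mathcal{W}\cap\Kk$, so the complex is a transfinite composite of weak equivalences along morphisms of $\Kk$, hence a weak equivalence by $\Kk$-compact generation together with two-out-of-three at the limit stages. Factoring an arbitrary trivial cofibration as (relative $F_T(J)$-cell)$\circ$(fibration) then forces the fibration to be a trivial fibration, so the retract argument exhibits the trivial cofibration as a retract of a relative $F_T(J)$-cell complex and therefore as a morphism with the left lifting property against all fibrations; the reverse inclusion is the same computation. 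Identifying, via $U_T$ and cofibrant generation of $\Ee$, the $F_T(I)$-injectives with the trivial fibrations of $\Alg_T$ and the $F_T(J)$-injectives with the fibrations of $\Alg_T$ completes the model structure axioms. For the two semi-model structure statements one restricts admissibility --- and hence everything above --- to pushouts into $T$-algebras that are cofibrant (resp.\ cofibrant in $\Ee$), thereby obtaining the factorisations and lifts only for morphisms with cofibrant (resp.\ $\Ee$-cofibrant) domain, which is precisely the notion of semi-model structure used in \cite{companion,Reedy-paper}.

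It remains to treat properness. The ``only if'' direction is formal: $U_T$ preserves fibrations and trivial fibrations by construction, so $F_T$ is left Quillen and carries cofibrations of $\Ee$ to cofibrations of $\Alg_T$; if the transferred structure is (relatively) left proper, then pushouts along these morphisms preserve weak equivalences (between $\Ee$-cofibrant algebras, in the relative case), which, after rewriting the relevant pushout square via the pasting law, is verbatim the statement that each $F_T(u)$ is a (relative) $h$-cofibration in the sense of Definition~\ref{defn:h-cofibration}. For the ``if'' direction, assume $F_T$ sends cofibrations of $\Ee$ to (relative) $h$-cofibrations, and let $\mathcal{H}$ be the class of $T$-algebra morphisms $g$ along which every pushout of a weak equivalence (between $\Ee$-cofibrant algebras, in the relative case) is again a weak equivalence. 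Then $\mathcal{H}$ is closed under cobase change (pasting law) and under retracts, so it suffices to show that relative $F_T(I)$-cell complexes lie in $\mathcal{H}$, since every cofibration of $\Alg_T$ is a retract of such a complex. Given a relative $F_T(I)$-cell complex $A=A_0\to\cdots\to A_\lambda=A'$ and a weak equivalence $f\colon A\to B$, set $B_\beta=A_\beta\cup_A B$; the pasting law presents each square $A_\beta\to A_{\beta+1}$, $B_\beta\to B_{\beta+1}$ as a cell-extension square of exactly the shape in Definition~\ref{defn:h-cofibration}, so the hypothesis on $F_T$ gives inductively that $A_\beta\to B_\beta$ is a weak equivalence at successor stages, while at limit stages both towers forget to relative $\Kk$-cell complexes and $\Kk$-compact generation keeps the morphism a weak equivalence. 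Hence $A'\to A'\cup_A B$ is a weak equivalence; in the relative case one additionally carries the hypothesis of $\Ee$-cofibrancy through the induction, using the filtration of cell extensions along cofibrations recorded in \cite{batanin-berger}.

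I expect the main obstacle to be exactly the two places that $(\Kk,\Ll)$-admissibility and $\Kk$-compact generation are tailored to handle: getting the small object argument off the ground --- which forces one to know that $U_T$ turns cellular extensions of $T$-algebras into relative $\Kk$-cell complexes, and that relative $F_T(J)$-cell complexes are weak equivalences --- and the limit-stage bookkeeping in the properness induction, where $\Kk$-compact generation may only legitimately be invoked along morphisms that are genuinely relative $\Kk$-cell complexes, and where, in the relative case, one must additionally check that underlying cofibrancy is preserved under cell extension along a cofibration. Everything else is the routine retract-argument accounting that accompanies any transferred model structure.
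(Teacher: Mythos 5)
Your proposal is correct, but it is a self-contained reconstruction of an argument the paper merely cites. The paper's proof says the admissible case ``follows exactly as in Theorem 2.11 of \cite{batanin-berger} except with $\Ll$ replacing $\mathcal{W} \cap \Kk$'', and that the semi-admissible cases follow by restricting condition (ii) of that theorem to (underlying) cofibrant objects. Where you carefully rebuild the small object argument, the lifting axioms, and the cell-by-cell properness induction, the paper defers all of those steps to \cite{batanin-berger}; your argument is a correct expansion of what that citation contains, so the underlying route is the same, just made explicit. One genuine simplification you miss and the paper uses: for the two semi-model cases, (relative) left properness is \emph{automatic}, because a semi-model structure (resp.\ semi-model structure over $\Ee$) only requires its factorisation and lifting axioms on (underlying-)cofibrant domains, so left properness reduces to the statement that weak equivalences between cofibrant objects are preserved under pushout along cofibrations, which holds in any semi-model category. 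Your cell induction for properness is therefore unnecessary work in those two cases, and the $\Ee$-cofibrancy bookkeeping you flag is only a live issue in the relative-left-properness half of the iff for the full model structure, where it is handled in \cite{batanin-berger} via the reduction to cofibrations with cofibrant domain (Proposition 2.12 there, recalled in the paper just after Definition~\ref{defn:relatively-left-proper}).
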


\begin{proof}
The proof in case where $T$ is $(\Kk,\Ll)$-admissible follows exactly as in Theorem 2.11 of \cite{batanin-berger} except with $\Ll$ replacing $\mathcal{W} \cap \Kk$. For the cases of semi-admissibility, we verify (ii) of Theorem 2.11 of \cite{batanin-berger} only for morphisms between cofibrant (resp. underlying cofibrant) objects, and this is what the semi-admissibility hypotheses guarantee. For the semi-admissibility cases, left properness is automatic, since it asks that weak equivalences between cofibrant objects be preserved by pushout along cofibrations. In the case of a semi-model structure over $\Ee$, this same proof shows $\Alg_T$ is relatively left proper.
\end{proof}

\begin{defin}
A monad $T$ is \emph{$(\Kk,\Ll)$-adequate} if the underlying morphism of any free $T$-algebra extension $u_\alpha:R\to R[u,\alpha]$ admits a functorial factorization
$$U_T(R)= R[u]^{(0)}\to R[u]^{(1)}\to \ldots \to R[u]^{(n)}\to \ldots \to \colim_n R[u]^{(n)}= U_T(R[u,\alpha]);$$
such that for a cofibration (resp. trivial cofibration) $u$, each morphism of the sequence belongs to $\Kk$ (resp. $\Ll$), and moreover for a weak equivalence $f:R\to S$, the induced morphisms $R[u]^{(n)}\to S[u]^{(n)}$ are weak equivalences for all $n\ge 0.$

The monad $T$ is \emph{relatively $(\Kk,\Ll)$-adequate} if the last property only holds if  $u$ is a cofibration with cofibrant domain and  $f:R\to S$ is a weak equivalence  with cofibrant underlying objects $U_T(R)$ and $U_T(S).$\end{defin}

\begin{theorem}\label{adequate}
Any finitary (relatively) $(\Kk,\Ll)$-adequate monad $T$ on a $\Kk$-compactly generated  model category $\Ee$ is $(\Kk,\Ll)$-admissible, and the associated free $T$-algebra functor takes cofibrations to (relative) $h$-cofibrations. Hence, the category of $T$-algebras has a transferred model structure which is (relatively) left proper.
\end{theorem}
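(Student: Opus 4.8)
The plan is to show that $(\Kk,\Ll)$-adequacy is exactly the combinatorial input needed to run the transfer machinery of Theorem \ref{thm:admissible-implies-(semi)-models}, and then to read off left properness from the last clause in the definition of adequacy. First I would establish admissibility: given a cofibration (resp.\ trivial cofibration) $u\colon X\to Y$ and a morphism $\alpha\colon F_T(X)\to R$ of $T$-algebras, the hypothesis of $(\Kk,\Ll)$-adequacy furnishes a functorial factorization of the underlying map of the free extension $u_\alpha\colon R\to R[u,\alpha]$ as a (transfinite, but here countable) composite
$$
U_T(R)=R[u]^{(0)}\to R[u]^{(1)}\to\cdots\to R[u]^{(n)}\to\cdots\to\colim_n R[u]^{(n)}=U_T(R[u,\alpha]),
$$
in which every stage lies in $\Kk$ (resp.\ in $\Ll$). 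Since $\Kk$ and $\Ll$ are saturated, they are closed under transfinite composition, so $U_T(u_\alpha)$ lies in $\Kk$ (resp.\ $\Ll$), which is precisely the condition in Definition \ref{admissible}. Here I use that $T$ is finitary so that $\colim_n R[u]^{(n)}$, formed in $\M$, still carries the $T$-algebra structure of $R[u,\alpha]$ and the colimit is computed correctly; and I use the preceding Lemma, so that it suffices to check the condition on generating (trivial) cofibrations, which is where the cellular/cofibrant-domain provisos in the ``relative'' version become available. Invoking Theorem \ref{thm:admissible-implies-(semi)-models} on the $\Kk$-compactly generated $\Ee$ then produces the transferred cofibrantly generated model structure on $\Alg_T$.

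Next I would prove that $F_T$ sends cofibrations to (relative) $h$-cofibrations, which by Theorem \ref{thm:admissible-implies-(semi)-models} is equivalent to (relative) left properness of the transferred structure. Consider a pushout $R[u,\alpha]$ as above together with a weak equivalence $f\colon R\to S$ of $T$-algebras and the induced extension $S[u,f\alpha]$; by the Proposition recalled just before Theorem \ref{thm:admissible-implies-(semi)-models} (Proposition 2.12 of \cite{batanin-berger}), in the relative case it is enough to treat cofibrations $u$ with cofibrant domain and $f$ with $U_T(R),U_T(S)$ cofibrant in $\Ee$. Naturality of the adequate factorization gives a ladder
$$
\xymatrix{
R[u]^{(0)}\ar[r]\ar[d] & R[u]^{(1)}\ar[r]\ar[d] & \cdots\ar[r] & \colim_n R[u]^{(n)}\ar[d]\\
S[u]^{(0)}\ar[r] & S[u]^{(1)}\ar[r] & \cdots\ar[r] & \colim_n S[u]^{(n)}
}
$$
whose bottom-left vertical map is $U_T(f)$, a weak equivalence, and whose every vertical map is a weak equivalence by the final property in the definition of $(\Kk,\Ll)$-adequacy (relative version: using the cofibrancy provisos). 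Since $\Ee$ is $\Kk$-compactly generated and each horizontal map lies in $\Kk$, weak equivalences are closed under the relevant filtered colimits, so the induced map $U_T(R[u,\alpha])\to U_T(S[u,f\alpha])$ on colimits is a weak equivalence. Thus $A[u,\alpha]\to B[u,f\alpha]$ is a weak equivalence of $T$-algebras, i.e.\ $F_T(u)$ is a (relative) $h$-cofibration, and left properness (relative left properness) follows.

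The main obstacle is the interchange of weak equivalences with the transfinite colimit in the left-properness argument: one must be careful that the maps $R[u]^{(n)}\to R[u]^{(n+1)}$ actually lie in the class $\Kk$ along which weak equivalences are stable under filtered colimit in a $\Kk$-compactly generated model category, and that the ladder above is genuinely a diagram in $\M$ to which this stability applies (objects small relative to $\Kk$-cell, horizontal maps in $\Kk$-cell). This is handled by the very definition of $(\Kk,\Ll)$-adequacy, which demands each stage lie in $\Kk$, together with Definition \ref{defn:compactly-generated}. A secondary subtlety is bookkeeping for the two ``relative'' variants: one must consistently propagate the hypotheses ``$u$ has cofibrant domain'' and ``$U_T(R),U_T(S)$ cofibrant'' through the reduction to generating cofibrations and through Proposition 2.12 of \cite{batanin-berger}; once that bookkeeping is set up, the relative case is formally identical to the absolute one.
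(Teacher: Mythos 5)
Your proof is correct and takes essentially the same approach the paper intends. The paper's own proof is just a citation to Theorem 2.23 of Batanin--Berger; your reconstruction — deriving admissibility from adequacy via saturation of $\Kk,\Ll$ under transfinite composition, then establishing the (relative) $h$-cofibration property for $F_T$ using the ladder of level-wise weak equivalences together with $\Kk$-compact generation to pass to the colimit, and invoking Theorem \ref{thm:admissible-implies-(semi)-models} for the conclusion — is exactly the mutatis mutandis adaptation that citation refers to.
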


\begin{proof}
The proof proceeds mutatis mutandis from the proof of Theorem 2.23 of \cite{batanin-berger}.
\end{proof}

\begin{remark}
When $T$ is polynomial, it is always semi-admissible, by Theorem 6.3.1 of \cite{white-yau}. It is sometimes semi-admissible over $\Ee$, e.g., when $Alg_T$ is the category of symmetric operads. We will see in Section \ref{sec:quasi-tame} that quasi-tame monads are admissible.
\end{remark}

\subsection{Tame polynomial monads} \label{subsec:tame}

Until the present work, the primary source of admissible monads was tame polynomial monads. We review the relevant terminology, since the first example in the next section is tame.

For a finitary monad $T$ on a cocomplete category $\mathbb{C}$, let $T+1$ be the finitary monad on $\mathbb{C} \times \mathbb{C}$ given by
\begin{equation*}
\begin{gathered}
(T+1)(X,Y) = (T(X),Y) \\
(T+1)(\phi,\psi) = (T(\phi),\psi)
\end{gathered}
\end{equation*}
with evident multiplication and unit. If $T$ is a polynomial monad, so is $T+1$ \cite{batanin-berger}. A polynomial monad $T$ is said to be \emph{tame} if the classifier $T^{T+1}$ is a coproduct of categories with terminal object. In this case, the terminal objects (one in each connected component) form a final discrete subcategory, drastically simplifying colimit computations. Furthermore, when $T$ is tame, the classifier $T^{T_{f,g}}$ described in \cite[Section 7]{batanin-berger} has a final subcategory spanned by a final discrete subcategory of simpler classifiers. This allows for the computation (Theorem 7.11 \cite{batanin-berger}) of (\ref{diagram:pushout}) as $\colim_k P_k$ where each $P_{k-1} \to P_k$ is a pushout in $\M$ of the morphism from the colimit of a punctured cube (whose vertices are words in $K,L,X$, and whose edges convert $K$'s to $L$'s or $X$'s) to the terminal vertex of all $L$'s and $X$'s. This computation is the key input to the following theorem (Theorem 8.1 of \cite{batanin-berger}):

\begin{theorem} \label{thm:tame-admissible}
Suppose $T$ is a tame polynomial monad. Suppose $\M$ is compactly generated and satisfies the monoid axiom. Then $\algtm$ is a relatively left proper model category, which is left proper if $\M$ is strongly $h$-monoidal.
\end{theorem}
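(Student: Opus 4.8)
The plan is to exhibit $T$ as a (relatively) $(\Kk,\Ll)$-adequate monad for a suitable pair of saturated classes --- $\Kk$ the monoidal saturation of the generating cofibrations of $\M$, and $\Ll$ the class of weak equivalences it contains --- and then to invoke Theorem \ref{adequate}. The starting point is the classifier machinery of Section \ref{subsec:classifiers}: for a cofibration $u\colon K\to L$ in $\M$ and a morphism $\alpha\colon F_T(K)\to R$ of $T$-algebras, the underlying morphism of the free extension $R\to R[u,\alpha]$ computing the pushout (\ref{diagram:pushout}) is a colimit indexed by the internal algebra classifier $T^{T_{f,g}}$ of \cite[Section 7]{batanin-berger}. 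Tameness of $T$ is precisely what makes this colimit computable: $T^{T_{f,g}}$ then has a final subcategory spanned by a final discrete subcategory of copies of the simpler classifier $T^{T+1}$, which collapses the colimit into the explicit filtration
$$U_T(R)=R[u]^{(0)}\to R[u]^{(1)}\to\cdots\to R[u]^{(k)}\to\cdots\to\colim_k R[u]^{(k)}=U_T(R[u,\alpha]),$$
in which each stage $R[u]^{(k-1)}\to R[u]^{(k)}$ is obtained by a pushout in $\M$ along the map from the colimit of a punctured cube to its terminal vertex, the cube having vertices the words in $K$, $L$ and copies of $U_T(R)$ tensored with the ($\Sigma$-cofibrant) operadic coefficients of the operad $O_T$ associated to $T$.

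The second step is to analyse these punctured-cube maps. Since $O_T$ is $\Sigma$-cofibrant and $\M$ is a monoidal model category, the map out of the punctured cube is an iterated pushout--product built from $u$ with cofibrant coefficients; hence it is a cofibration whenever $u$ is a cofibration, and it lies in (Trivial-Cofibrations $\otimes \M$) whenever $u$ is a trivial cofibration. Therefore, when $u$ is a cofibration every stage of the filtration is a cofibration (a pushout of one), and when $u$ is a trivial cofibration every stage is a pushout of a morphism in (Trivial-Cofibrations $\otimes \M$), so the whole filtration lies in (Trivial-Cofibrations $\otimes \M$)-cell and in particular consists of weak equivalences by the monoid axiom. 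Since $\M$ is compactly generated, it is $\Kk$-compactly generated, so the small object argument is available and these stages genuinely lie in $\Kk$, resp.\ $\Ll$.

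It remains to check the homotopy-invariance clause of adequacy: a weak equivalence $f\colon R\to S$ must induce weak equivalences $R[u]^{(k)}\to S[u]^{(k)}$ for every $k$. I would prove this by induction on $k$, comparing the two punctured cubes stage by stage. When $U_T(R)$ and $U_T(S)$ are cofibrant in $\M$, the corresponding cube vertices differ by weak equivalences between cofibrant objects, and the cube maps are relative $h$-cofibrations --- this uses $h$-monoidality of $\M$ (Definition \ref{defn:h-monoidal}) together with the stability of relative $h$-cofibrations under pushout--product with cofibrant coefficients --- so each successive pushout preserves the weak equivalence. This establishes relative $(\Kk,\Ll)$-adequacy, and Theorem \ref{adequate} then produces the transferred model structure on $\algtm$ and its relative left properness. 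If moreover $\M$ is strongly $h$-monoidal, weak equivalences are closed under tensor product, the cofibrancy hypotheses on $U_T(R),U_T(S)$ become unnecessary, the cube maps are honest $h$-cofibrations, and one obtains full $(\Kk,\Ll)$-adequacy and hence left properness of $\algtm$.

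The main obstacle is the second step: correctly recognising the maps produced by the tame classifier as iterated pushout--products, and tracking the $\Sigma$-cofibrancy of $O_T$ through the symmetric-group quotients that appear in the passage from the polynomial monad to its associated operad, so that the pushout--product axiom, the monoid axiom, and ($h$-)monoidality genuinely apply --- in particular, verifying that relative $h$-cofibrations are preserved by these pushout--product constructions, which is exactly what makes the induction in the adequacy argument close. Everything downstream is then a formal consequence of Theorems \ref{adequate} and \ref{thm:admissible-implies-(semi)-models}.
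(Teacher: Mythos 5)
Your overall strategy---exhibit $T$ as a (relatively) $(\Kk,\Ll)$-adequate monad using the tame-classifier filtration and invoke Theorem \ref{adequate}---is the right framework and matches Batanin--Berger's Theorem 8.1, which this theorem is quoting. But you have misidentified the ``main obstacle'': tameness is precisely what makes the symmetric-group quotients you worry about disappear. Tameness means $T^{T+1}$ is a coproduct of categories with terminal object, so $T^{T_{f,g}}$ has a final \emph{discrete} subcategory; the colimit over the classifier therefore collapses to a coproduct indexed by a set, and the punctured-cube vertices are honest (coproducts of) tensor products of copies of $K$, $L$ and $U_T(R)$ with no $\Sigma_n$-quotient in sight. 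The pushout-product and monoid axioms then apply directly, with no need to trace $\Sigma$-cofibrancy through anything. The quotients $f^{\boxprod n}/\Sigma_n$ you wanted to track belong to the non-tame or non-polynomial world (e.g.\ the commutative-monoid classifier of Section \ref{sec:non-poly}, where they genuinely occur and force the commutative monoid axiom); importing them here obscures the very reason the tame hypothesis is useful.

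Second, for relative left properness you invoke $h$-monoidality of $\M$, but the theorem only assumes $\M$ compactly generated with the monoid axiom, and that does not give $h$-monoidality. The relative clause works without it, because cofibrancy carries the argument. Once $u$ is reduced to a cofibration with cofibrant domain (which suffices by \cite[Prop.~2.12]{batanin-berger}) and $U_T(R),U_T(S)$ are cofibrant, every object in the comparison cube---$Q_k$, $L_k$, and the filtration stages $R[u]^{(k)}$, built inductively as pushouts along cofibrations with cofibrant domain---is itself cofibrant. A pushout square whose corners are cofibrant and whose top leg is a cofibration is automatically a homotopy pushout, so the gluing lemma carries the inductive weak equivalence $R[u]^{(k-1)}\to S[u]^{(k-1)}$ forward to $R[u]^{(k)}\to S[u]^{(k)}$ with no extra monoidal hypotheses. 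Strong $h$-monoidality is genuinely needed only for the full left properness clause, where cofibrancy of $U_T(R),U_T(S)$ is not available; there your reasoning is fine.
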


In the proof of this theorem, the filtration from \cite{batanin-berger} (generalized in Proposition \ref{filtration} below) is used to filter each morphism of the form $A[u,\alpha] \to B[u,f\alpha]$ into a transfinite sequence of trivial $h$-cofibrations, which will be a weak equivalence as required.

In this paper, we will make heavy use of Proposition \ref{filtration}, and most of the monads we encounter will be polynomial. Numerous examples are given in \cite{batanin-berger}, and more can be found in Sections \ref{sec:quasi-tame} and \ref{sec:non-poly}. The filtration of Proposition \ref{filtration} is fundamental for us because it will enable us to replace free algebra extensions by left Kan extensions which are much easier to analyze. For  tame and quasi-tame polynomial monads, the filtration decomposes into a particularly tractable form, so that $T$-algebras have transferred left proper model structures in many settings of interest \cite{batanin-berger}.

\section{Polynomial monads for Grothendieck constructions} \label{sec:Gr(T)}

In this section, we prove a general result about when $\int \Phi$ is encoded by a polynomial monad, and we give numerous examples of this phenomenon.

\subsection{The Grothendieck construction} \label{subsec:grothendieck-preliminaries}

We record some basic observations about the Grothendieck construction that we will need.

Let $\cat{B}$ be a category and 
$$\Phi:\cat{B}^{op} \to \CAT$$
be a functor.
Let $\phi:O\to O'$ be a morphism in $\cat{B}.$ We will denote  $\phi^* =\Phi(\phi): \Phi(O')\to \Phi(O).$ We can form now the Grothendieck construction $\int \Phi$: the objects are pairs $(O,A)$ where $O\in  \cat{B}$ and $A\in \Phi(O).$ 
A morphism 
$$(O,A)\to (O',A')$$
is a pair $(\phi,f)$ where $\phi:O\to O'$ and $f:A\to \phi^*(A').$ 
We will very often identify a morphism $f$ in $\Phi(O)$ with a morphism $(id,f)$ in $\int \Phi.$ 

As we are interested in model structures, we will assume $\int \Phi$ is cocomplete. This implies that each $\phi^*$ is a right adjoint. The Grothendieck construction always has a projection functor $p:\int \Phi \to \cat{B}$ taking $(O,A) \to O$. If each category $\Phi(O)$ has an initial object $i_O$, then this functor $p$ admits a left adjoint $i:\cat{B} \to \int \Phi$ by $O \mapsto (O,i_O)$. In our settings these will form a Quillen pair, since $p$ preserves (trivial) fibrations by definition of global and horizontal weak equivalences and fibrations. If each category $\Phi(O)$ has a terminal object $t_O$, then this functor $p$ admits a right adjoint $r:\cat{B} \to \int \Phi$ by $O \mapsto (O,t_O)$. In our settings, this functor $r$ is a right Quillen functor, by the definition of global and horizontal structures.

Sometimes, the functor $i$ admits a further left adjoint $E:\int \Phi \to \cat{B}$. This is the case for certain categories of operads and their algebras, and $E(O,A) = O_A$ is the \textit{enveloping operad}, whose algebras are $O$-algebras under $A$ \cite{BM07}. This is also the case for categories of (commutative) monoids and modules, and $E(R,M) = R_M$ is the \textit{enveloping algebra} of the pair $(R,M)$, where $M$ is an $R$-module. Note also that the functor $E$ does not exist for all categories of operads. For example, if we work with reduced operads, then there is no enveloping operad construction, since $O_A$ is not a reduced operad ($O_A(0) \cong A$).

These functors and the adjunctions are summarized in Figure (\ref{figure:adjoints}):

\begin{align} \label{figure:adjoints}
\xymatrix{
	\int \Phi
	\ar@<0.75cm>@{}[d]|{\dashv} 
	\ar@/^0.5cm/[d]_{p} 
	\ar@{}[d]|{\dashv} 
	\ar@<-0.75cm>@{}[d]|{\dashv} 
	\ar@/_1cm/[d]_{E} 
	\\
	\cat{B}
	\ar@/_1cm/[u]_{r} 
	\ar@/^0.5cm/[u]_{i} 
}
\end{align}

We finish this section by proving that, if $\cat{B}$ is the category of algebras over a polynomial monad $T$, and if we have a way to speak of algebras over every $O\in \cat B$, via a morphism of polynomials, then $\int \Phi$ is encoded as algebras over a polynomial monad that we can explicitly construct. The special case where $\cat{B}$ is the category of one-colored symmetric operads, and $\Phi(O) = \Alg_O$, \cite{gutierrez-rondigs-spitzweck-ostvaer-colocalizations} (Lemma 4.7) proves that there is a $\Sigma$-cofibrant colored operad for $\int \Phi$, making use of results in \cite{BM07}. 

Let $T$ be an $I$-colored symmetric operad in a closed symmetric monoidal category $\M$, equipped with a morphism of operads $\phi: T \to SOp(J)$, where $SOp(J)$ is the $\Sigma$-free symmetric operad for $J$-colored symmetric operads. This morphism $\phi$ induces a restriction functor
$$\phi^*: \Alg_{SOp(J)}(\M) \to \Alg_T(\M).$$
This restriction functor allows us to talk about algebras of $O$, when $O$ is itself an algebra of $T$:

\begin{definition}
Let $O$ be an algebra of $T$. An algebra of $O$ in $\M$ is a $J$-collection $C=\{C_j \;|\; j\in J\}$ of objects of $\M$ equipped with a morphism of $T$-algebras
$$O\to \phi^*(End(C))$$
where $End(C)$ is the endomorphism operad of $C.$
\end{definition}

One can define a morphism of $O$-algebras in the usual way. If $\M$ is cocomplete then $\phi^*$ admits a left adjoint $\phi_!$ and the category of $O$-algebras is isomorphic to the category  $\Alg_{\phi_!(O)}(\M).$  

We now have a functor 
$$\Phi:\Alg_{T}(\M)^{op}\to \Cat$$
which assigns to a $T$-algebra $O$ the category of $O$-algebras and to a morphism of $T$-algebras $f:O\to O'$ assigns the restriction
functor $f^*: \Alg_{O'}(\M)\to \Alg_O(\M).$ We can form now the Grothendieck construction $\int \Phi.$ So, the objects of $\int \Phi$ are pairs $(O,A)$ where $O\in \Alg_T(\M)$ and $A\in \Alg_O(\M).$ 
A morphism $$(O,A)\to (O',A')$$ is a pair $(f,\alpha)$ where $f:O\to O'$ and $\alpha:A\to f^*(A').$ 

Let now $\M$ be a symmetric monoidal model category. We will call a morphism $(f,\phi)$ in $\int \Phi $ a weak equivalence (fibration) if the underlying morphisms of $I$-collections
$U(f)$ and $J$-collections $U(\phi)$ are pointwise weak equivalences (fibrations).
We will say that $\int \Phi$ admits the {\em global model structure} if there is a model structure on $\int \Phi$ with weak equivalences and fibrations defined as above \cite{harpaz-prasma-integrated}.

It is in general a difficult task to know if such a model structure exists. In this paper we concentrate on the case when $T$ comes from a polynomial monad in $\Set$, and, moreover, $\phi$ is a morphism of polynomial monads (using that $SOp(J)$ is a polynomial monad). 

\begin{proposition} \label{prop:poly-for-Gr}
If $\phi$ is a morphism of polynomial monads in $\Set$ then there exists a polynomial monad $Gr(T)$ such that the category $\int \Phi $ is isomorphic to the category $\Alg_{Gr(T)}(\M).$ 
\end{proposition}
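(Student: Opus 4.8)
The plan is to construct $Gr(T)$ explicitly as a polynomial monad on $\Set/(I \sqcup J)$ and then identify its algebras with $\int\Phi$. First I would set up the color set. Since $T$ is a polynomial monad on $\Set/I$ and $SOp(J)$ is a polynomial monad on $\Set/J$, I take the color set of $Gr(T)$ to be $I \sqcup J$. Write the polynomial for $T$ as $I \leftarrow E_T \to B_T \to I$ and the polynomial for $SOp(J)$ as $J \leftarrow E_S \to B_S \to J$. The operations of $Gr(T)$ should be of two kinds: (a) operations of $T$, with source and target in $I$; and (b) "mixed" operations that encode the action of an algebra, i.e.\ operations whose target is a color in $J$ and whose inputs form a profile consisting of one color in $I$ (recording which $T$-operation — equivalently, which operation of the operad $O$ being acted on) together with finitely many colors in $J$ (recording the inputs of the algebra). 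Concretely, using the morphism of polynomials $\phi: T \to SOp(J)$, which on colors is a map $\phi_0: I \to J$ and on operations a map $B_T \to B_S$ compatible with sources and targets, I would define $B_{Gr(T)} = B_T \sqcup (B_S \times_{B_S} \text{something})$; more precisely the mixed operations are pairs $(b, \text{input data})$ where $b \in B_T$ and the $J$-inputs are indexed by $\phi_0(p_T^{-1}(b))$ counted with multiplicity, so that the mixed part of the polynomial is essentially the pullback of $E_S \to B_S$ along the map $B_T \to B_S$. The target map sends a mixed operation to $\phi_0(t_T(b)) \in J$, and the source map records the color $t_T(b) \in I$ together with the $J$-colors listed above. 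The multiplication of $Gr(T)$ comes from the multiplication of $T$ on the $I$-part, the multiplication of $SOp(J)$ composed with $\phi$ on the $J$-to-$J$ part, and a mixed composite that grafts a $T$-operation onto the $I$-slot of a mixed operation, using the monad structure maps and $\phi$'s compatibility with multiplication.

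Second, I would check that this data indeed defines a polynomial monad: $p_{Gr(T)}$ has finite fibers because $p_T$ and $p_S$ do (and mixed operations have finitely many $J$-inputs since $p_T$ does), associativity and unitality follow from those of $T$ and $SOp(J)$ together with the fact that $\phi$ is a morphism of monads (so the cartesian-ness of the structure maps is inherited), and the unit picks out the identity operations in $I$ and the unit operations in $J$ coming through $\phi$. This is essentially a "glueing" of two polynomial monads along a morphism; it is routine but needs care that all squares remain cartesian. One clean way to organize this is to recognize $Gr(T)$ as an instance of a known construction — the "Grothendieck" or "matrix" polynomial monad built from a morphism $\phi: S \to T$ of polynomial monads (related to the semidirect product / wreath constructions in \cite{batanin-kock-joyal}) — and cite or reprove the relevant lemma. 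Alternatively one can simply verify the monad axioms by hand using the decomposition $t_! p_* s^*$.

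Third, I would identify the algebras. A $Gr(T)$-algebra in $\M$ is an $(I\sqcup J)$-indexed collection, i.e.\ a pair $(O, C)$ with $O$ an $I$-collection and $C$ a $J$-collection, together with structure maps. The structure maps indexed by operations of type (a) make $O$ precisely a $T$-algebra. The structure maps indexed by mixed operations of type (b), after unwinding, are exactly the data of a map $O \to \phi^*(\End(C))$ of $T$-algebras — this is because a mixed operation with underlying $b \in B_T$ and $J$-inputs $\phi_0(p_T^{-1}(b))$ gives, for each point of $O$ over $b$, a multilinear map $\bigotimes_{j} C_j \to C_{\phi_0(t_T(b))}$, and compatibility with the mixed multiplication translates precisely into the condition that this assignment is a morphism of $T$-algebras into the restricted endomorphism operad. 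Thus $Gr(T)$-algebras are pairs $(O,A)$ with $O \in \Alg_T(\M)$ and $A \in \Alg_O(\M)$, and the same bookkeeping on morphisms shows a morphism of $Gr(T)$-algebras is a pair $(f,\alpha)$ with $f: O \to O'$ and $\alpha: A \to f^*(A')$ — exactly a morphism in $\int\Phi$. Hence $\Alg_{Gr(T)}(\M) \cong \int\Phi$.

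The main obstacle I anticipate is not any single deep step but rather getting the combinatorics of the mixed operations and their composition exactly right — in particular, making sure the source of a mixed operation correctly carries \emph{both} the $I$-color $t_T(b)$ (so that $T$-operations can be grafted in, recovering the $O$-algebra structure rather than just an $\End(C)$-algebra structure) and the list of $J$-colors, and that the resulting composition law is associative on the nose. A secondary subtlety is the role of $\phi$ being a morphism of polynomial monads (as opposed to merely a morphism of operads): this is what guarantees the mixed polynomial's structure maps are cartesian and that the induced monad is genuinely polynomial rather than merely finitary, which is why the hypothesis is stated as it is. I would handle this by phrasing everything at the level of polynomials and cartesian natural transformations from the outset, so that the monad axioms for $Gr(T)$ reduce formally to those for $T$ and $SOp(J)$ plus the monad-morphism axioms for $\phi$.
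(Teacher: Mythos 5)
Your overall strategy matches the paper's: take the color set $I\sqcup J$, keep the operations of $T$ unchanged, and adjoin a family of ``mixed'' operations targeting colors in $J$ that encode the action, then check that algebras for the resulting polynomial monad are pairs $(O,A)$. However, the concrete description of the mixed part is not correct, and the error is precisely in the place you flag as ``getting the combinatorics of the mixed operations exactly right.''

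The problem is that you give each mixed operation a \emph{single} $I$-coloured input, namely $t_T(b)$, plus some $J$-coloured inputs. With this choice the set of operations is not closed under substitution, so you do not get a polynomial monad: substituting a $T$-operation $b'$ with $|p_T^{-1}(b')|>1$ inputs into that one $I$-slot produces an operation with several $I$-inputs, which is not of your stated form. In the paper's construction the mixed operation indexed by $b\in B$ instead has $I$-inputs indexed by the whole fibre $p^{-1}(b)$ (the sources of $b$), together with $J$-inputs indexed by the leaves of $\psi(b)\in ORTr(J)$, and target the root colour of $\psi(b)$. This is exactly what makes the $I$-side substitution close up (it is just $T$-substitution), and what you can check against the $Gr(Mon)$ example: the boxed linear graph with $k$ circles and one box has $k$ inputs of colour $r$ and one of colour $m$, not one of each. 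A secondary confusion: the pullback $B_T\times_{B_S}E_S$ you propose as ``the mixed part of the polynomial'' is, because $\psi$ is a cartesian morphism of polynomials, simply $E_T$ again; it indexes the $I$-markings of the sources of $b$ and has nothing to do with the $J$-inputs, which must come from markings on the leaves of $\psi(b)$. So the pullback description and the ``$J$-inputs indexed by $\phi_0(p_T^{-1}(b))$'' description point at different (and both incorrect) things. The fix is to let $D^*$ over a mixed operation $(b,\sigma)$ with $\sigma=\psi(b)$ be $p^{-1}(b)\sqcup\mathrm{leaves}(\sigma)$, with the source map landing in $I$ on the first summand and in $J$ on the second; with that change, your steps two and three (checking the monad axioms and identifying algebras) go through along the lines you sketch.
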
 

\begin{proof}
Recall from \cite[Section 9.4]{batanin-berger} that $SOp(J)$ is represented by the polynomial 
$$Bq(J) \leftarrow ORTr^*(J) \to ORTr(J) \to Bq(J)$$
where $ORTr(J)$ is the set of ordered rooted trees whose edges are decorated by $J$ and $Bq(J)$ is the set of decorated bouquets. $ORTr^*(J)$ is the set of decorated ordered rooted trees with a marked vertex. Let $T$ be represented by a polynomial $I \leftarrow E \to B \to I$ and $\phi$ is given by the morphism of polynomials

\begin{diagram}
I & \lTo^{s} & E\SEpbk &\rTo^{p} & B &\rTo^{t}&I\\
\dTo^{c}&&\dTo&&\dTo_{\psi}&&\dTo_{c}\\
Bq(J)&\lTo& ORTr^*(J) &\rTo & ORTr(J)&\rTo & Bq(J)\end{diagram}

We define a polynomial monad 
\begin{diagram}I\sqcup J &\lTo^{\xi}&D^*&\rTo^{\pi}&D&\rTo^{\tau}&I\sqcup J\end{diagram}
in the following way. The set $D$ is the coproduct $$B \sqcup B = B \sqcup \{(b,\sigma)\ | \ \sigma \in ORTr(J), b\in \psi^{-1}(\sigma)\}.  $$ 
The target of an element $b\in B$ is $t(b)$ and the target of $(b,\sigma)$ is the target of $\sigma.$ 
	
The set $D^*$ is the set of elements of $D$ with a marked source in the sense that given $b\in B$ we mark one of the element of $p^{-1}(b).$ For a pair $(b,\sigma)$ we either mark an element of $p^{-1}(b)$ or we mark one of the leaves of $\sigma.$ By forgetting the marked elements we obtain a morphism $D^*\to D.$ 
	
	Finally, the source of the element from $D^*$ coincides with the source of the marked element of $p^{-1}(b)$ or the color of the marked leaf of $\sigma.$ 
	
	The substitution operation is given by substitution in $T$ if the source was from $I$, or grafting of trees if the source was from $J.$ 
\end{proof}

In Section \ref{sec:Gr(T)} we will see that, if $T$ is the monad for monoids, then the polynomial monad for $Gr(T)$ is tame. In Section \ref{sec:quasi-tame}, we will see that, if $T$ is the monad for non-symmetric operads then the monad $Gr(T)$, whose algebras are pairs $(O,A)$ where $A$ is an $O$-algebra, is quasi-tame but is not tame. The same is true for the monad for pairs $(O,M)$ where $O$ is a non-symmetric operad and $M$ is an $O$-module. Furthermore, if $T$ is the monad for reduced symmetric operads, then $Gr(T)$ is not even quasi-tame, and hence we are forced into the world of semi-model categories, treated in \cite[Section 3.2]{companion}. In Section \ref{sec:non-poly}, we will see that, if $T$ is the monad for commutative monoids, then $T$ and $Gr(T)$ are not even polynomial, but we still have a technique to endow their categories of algebras with transferred model structures.

\subsection{Example: monoids and modules} \label{subsec:monoids-modules}

In this section we introduce a polynomial monad $Gr(Mon)$ whose algebras are pairs $(R,M)$ where $R$ is a monoid in $\M$ and $M$ is an $R$-module. 
As a symmetric operad it was described in \cite{BM07}[Example 1.5.1]. We give polynomial description of it and show that this monad is tame in the sense of Batanin-Berger. It follows immediately from \cite{batanin-berger} that under good conditions the category of $Gr(Mon)$-algebras has a model structure, which is (relative) left proper if $\M$ is $h$-monoidal. 
The conditions we obtained are weaker then the conditions from \cite{harpaz-prasma-integrated} for the existence of this model structure.

The polynomial which represents this monad is given by  
\begin{diagram}\{r,m\} &\lTo^{s}&D^*&\rTo^{p}&D&\rTo^{t}&\{r,m\}\end{diagram}
Here $\{r,m\}$ is two element set. The set $D$ consists of linear graphs of two types: 

\[
\begin{tikzpicture}
\draw[thick] (-1,0) -- (4,0);
\draw (1.5,0) ellipse (2.2 and .7);
\draw[thick,fill=white] (0,0) circle (.3);
\draw[thick,fill=white] (1,0) circle (.3);
\draw[thick,fill=white] (2,0) circle (.3);
\draw[thick,fill=white] (3,0) circle (.3);
\end{tikzpicture}
\]

and 

\[
\begin{tikzpicture}
\draw[thick] (-1,0) -- (3,0);
\draw (4,0);
\draw (-.5,-.5) -- (3.5,-.5) -- (3.5,.5) -- (-.5,.5) -- (-.5,-.5);
\draw[thick,fill=white] (0,0) circle (.3);
\draw[thick,fill=white] (1,0) circle (.3);
\draw[thick,fill=white] (2,0) circle (.3);
\draw[thick,fill=white] (2.7,-.3) -- (3.3,-.3) -- (3.3,.3) -- (2.7,.3) -- (2.7,-.3);
\end{tikzpicture}
\]

The target of a graph of the first type is $r$ and the target of the graph of the second type is $m.$ The set $D^*$ consists of the elements of $D$ with one vertex marked. The source of such an element is $r$ if the marked vertex is a circle and it is $m$ if the marked vertex is a box.

As usual, the morphism $p$ forgets the marking and the substitution operation is defined by insertion at the marked vertex. Notice that a circled tree can be inserted only in a circled vertex and a boxed tree can be inserted only in a boxed vertex. Pictorially, this means the insertion operation is the gluing of the outside boundary of a tree to the boundary of the vertex. 

It is not hard to check that the polynomial above defines a polynomial monad $Gr(Mon)$ whose algebras consists of pairs $(R,M)$ consisting of a monoid $R$ and a left $R$-module $M$. We now prove that this monad is tame.

\begin{proposition} \label{prop:Gr(Mon)-tame}
The polynomial monad $Gr(Mon)$ is tame.
\end{proposition}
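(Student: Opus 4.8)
The plan is to verify the definition of tameness directly: I need to show that the internal algebra classifier $T^{T+1}$ for the polynomial monad $T = Gr(Mon)$ is a coproduct of categories each having a terminal object. Following the classifier machinery of \cite{batanin-berger}, the relevant classifier is built from trees whose vertices are decorated by operations of $Gr(Mon)$ — that is, linear graphs of the two types pictured above (all-circled linear graphs with target $r$, and linear graphs with a single box at the right end with target $m$) — with one distinguished vertex marked (the ``$+1$'' part, corresponding to the color that is freely adjoined). Concretely, an object of $T^{T+1}$ is a tree-like composite of such linear-graph operations, one of whose nodes carries the fresh color; morphisms are generated by the face maps that contract an unmarked operation into a neighbor. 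Because the operations here are so rigid (insertion is just concatenation of linear graphs of the appropriate type), this classifier should decompose according to discrete combinatorial data.

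First I would identify the connected components of $T^{T+1}$. The key observation is that any composite of circled linear graphs is again a circled linear graph (a monoid word), and inserting a boxed graph only into the unique box preserves the single-box structure; so the ``shape'' of the total composite is just a linear graph with $k$ circles followed by at most one box, plus the position of the marked/fresh vertex. The number of vertices and whether the last one is a box, together with which vertex is marked, are invariant under the face maps, so they index the connected components. Thus $T^{T+1} = \coprod_{\text{shapes}} (T^{T+1})_{\text{shape}}$, a coproduct over these discrete data. Within a fixed component, an object records a bracketing/parenthesization of the linear word into the nested insertion operations, and the face maps remove internal brackets; this is exactly the poset (or category) of such bracketings, which has a terminal object given by the maximally-composed (one-level) configuration — the single operation that is the whole linear graph with the marked vertex in its given slot. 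I would then check that the face maps indeed all point toward this terminal object and that there are no nontrivial automorphisms obstructing terminality, using that $Mon$ (hence $Gr(Mon)$) is a $\Sigma$-free / linear situation with no symmetric-group actions to worry about.

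So the steps in order are: (1) unpack the classifier $T^{T+1}$ concretely in terms of $Gr(Mon)$-trees with a marked vertex, citing the relevant construction in \cite{batanin-berger}; (2) observe that insertion of linear graphs is rigid enough that the underlying ``linear shape'' and the marked-vertex position are locally constant, hence index connected components; (3) for each component, exhibit the terminal object as the totally-composed configuration and check every generating face map decreases the number of composition levels toward it, so each component is a category with terminal object; (4) conclude $T^{T+1}$ is a coproduct of categories with terminal object, i.e. $Gr(Mon)$ is tame. I expect the main obstacle to be step (3): being careful that the face-map structure genuinely makes each component \emph{cofiltered toward} (or directed enough to admit) the terminal object, rather than merely having a candidate terminal object — in practice this amounts to checking that the bar-construction-style diagram indexing a component is, after stripping degeneracies, the nerve of a category with a terminal object, which is the standard pattern for tame monads like $Mon$ itself, and the two-color bookkeeping ($r$ versus $m$) does not spoil it because boxed and circled vertices never interact beyond the single permitted box slot.
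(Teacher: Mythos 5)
There is a genuine gap in your proposal, and it lies in your description of the classifier $T^{T+1}$. You write that an object of $T^{T+1}$ is ``a tree-like composite of such linear-graph operations, one of whose nodes carries the fresh color,'' i.e.\ a tree of operations with a single marked vertex. That is not the correct combinatorial picture for $T^{T+1}$. The monad $T+1$ on $\M\times\M$ sends $(A,L)\mapsto (T(A),L)$, so the classifier records the formation of the semi-free coproduct $T(L)\sqcup A$; accordingly, the objects of $T^{T+1}$ are the linear graphs of $Gr(Mon)$ with \emph{each} vertex independently decorated by one of two letters, $X$ (the algebra part) or $K$ (the freely adjoined part). There is no restriction to a single $K$-vertex. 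The generating morphisms are not merely ``face maps'': they contract edges connecting two adjacent $X$-vertices (with a rule for a box on the right), and they also \emph{insert} a new circled $X$-vertex on an edge. Both kinds of generators are essential.

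These errors propagate. Your proposed indexing of connected components (``number of vertices, whether the last one is a box, and the marked-vertex position'') is not invariant: the number of vertices changes under both contractions and insertions, and the $K$-vertices are the invariant data, not a single marked slot. More importantly, your candidate terminal object — ``the maximally-composed (one-level) configuration, the single operation'' — is wrong. The terminal object of a connected component of $T^{T+1}$ is the alternating configuration $X\,K\,X\,K\,\cdots\,X$ (or the boxed analogues), in which every pair of $K$'s is separated by exactly one $X$ and the ends are padded with $X$'s: you reach it from any object by composing all runs of adjacent $X$'s and inserting an $X$ wherever two $K$'s (or an end and a $K$) are adjacent. The terminal object therefore generally has many vertices, not one. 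Your step (3) would fail because your candidate terminal object does not even lie in the right connected component in general.

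The confusion appears to come from conflating $T^{T+1}$ with the absolute classifier $T^T$ (the two-truncated bar resolution $B(T,T,1)_\bullet$ of Theorem \ref{bar1}), where the terminal object really is the fully composed single vertex and where ``bracketings with face maps'' is the right mental model. For tameness one must instead work with $T^{T+1}$, whose objects carry the extra $\{X,K\}$-coloring. Once you replace your description with the correct one, the rest of the argument is easy, as the paper shows: components are indexed by the pattern of $K$'s (and the presence/absence and color of the box), and each component visibly has the alternating configuration as terminal object.
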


\begin{proof}
For this we need to describe the classifier $Gr(Mon)^{Gr(Mon)+1}$ (see \cite{batanin-berger}.) According to the general procedure given in \cite{batanin-berger}[6.19] the objects of this classifier are graphs like above with an additional decoration of each vertex by one of two letters $X$ or $K.$ The morphisms are given by contraction of edges which connect two vertices decorated by $X$ with the agreement that a contraction of an edge whose right vertex is a box results to a boxed $X$-vertex. We also can  introduce a new circled $X$-vertex on any edge except for the most right edge of a boxed vertex.

It is now trivial to see that such a category has a terminal object in each connected component. They are precisely linear graphs of the following types:  

\[
\begin{tikzpicture}
\draw[thick] (-1,0) -- (5,0);
\draw (2,0) ellipse (2.7 and .7);
\draw[thick,fill=white] (0,0) circle (.3) node{$X$};
\draw[thick,fill=white] (1,0) circle (.3) node{$K$};
\draw[thick,fill=white] (2,0) circle (.3) node{$X$};
\draw[thick,fill=white] (3,0) circle (.3) node{$K$};
\draw[thick,fill=white] (4,0) circle (.3) node{$X$};
\end{tikzpicture}
\]

\noindent (see the description of the classifier $T^{T+1}$ from \cite{batanin-berger}) or 

\[
\begin{tikzpicture}
\draw[thick] (-1,0) -- (4,0);
\draw (-.5,-.5) -- (4.5,-.5) -- (4.5,.5) -- (-.5,.5) -- (-.5,-.5);
\draw[thick,fill=white] (0,0) circle (.3) node{$X$};
\draw[thick,fill=white] (1,0) circle (.3) node{$K$};
\draw[thick,fill=white] (2,0) circle (.3) node{$X$};
\draw[thick,fill=white] (3,0) circle (.3) node{$K$};
\draw[thick,fill=white] (3.7,-.3) -- (4.3,-.3) -- (4.3,.3) -- (3.7,.3) -- (3.7,-.3);
\draw (4,0) node {$X$};
\end{tikzpicture}
\]

or 

\[
\begin{tikzpicture}
\draw[thick] (-1,0) -- (3,0);
\draw (-.5,-.5) -- (3.5,-.5) -- (3.5,.5) -- (-.5,.5) -- (-.5,-.5);
\draw[thick,fill=white] (0,0) circle (.3) node{$X$};
\draw[thick,fill=white] (1,0) circle (.3) node{$K$};
\draw[thick,fill=white] (2,0) circle (.3) node{$X$};
\draw[thick,fill=white] (2.7,-.3) -- (3.3,-.3) -- (3.3,.3) -- (2.7,.3) -- (2.7,-.3);
\draw (3,0) node {$K$};
\end{tikzpicture}
\]

This shows that the polynomial monad $Gr(Mon)$ is tame.
\end{proof}

\begin{remark} 
Similar calculations show that there exists a tame polynomial monad for monoids and right modules as well as for monoids and right-left modules and for the category of bimodules. As a consequence, these categories inherit transferred model structures.  
\end{remark} 

More generally, one can construct a polynomial monad $Gr(Mon(I))$ whose category of algebras is the Grothendieck construction for the functor
$$Cat(I)^{op}\to \CAT$$
which associates to each small $\M$-category $C$ with set of objects $I$, the category of $\M$-valued presheaves on $C.$ For the construction of this monad it is enough to use linear graphs whose edges are colored by elements of $I$ with the following restriction:     the colors of the outgoing and incoming edges of a boxed vertex must coincide. The set of colors of this monad is $I\times I \sqcup I.$ The target of a linear graph is the pair of colors of roots and the color of the leave for nonboxed graphs and is the color of root for boxed graphs.  The morphisms in the classifier $Gr(Mon(I))^{Gr(Mon(I))+1}$ 
are very much similar to the one-colored case with one extra rule that the result of contraction of an internal edge between a circled vertex and a boxed vertex is a boxed vertex whose edges have colors of the incoming circled vertex. For example, the contraction of the internal edge of the graph 

\[
\begin{tikzpicture}
\draw[thick] (-1,0) -- (1.2,0);
\draw (-.75,-.05) node[above]{$j$};
\draw (-.5,-.5) -- (1.7,-.5) -- (1.7,.5) -- (-.5,.5) -- (-.5,-.5);
\draw[thick,fill=white] (0,0) circle (.3) node{$X$};
\draw (.6,0) node[above]{$i$};
\draw[thick,fill=white] (.9,-.3) -- (1.5,-.3) -- (1.5,.3) -- (.9,.3) -- (.9,-.3);
\draw (1.2,0) node {$X$};
\end{tikzpicture}
\]

results in the graph

\[
\begin{tikzpicture}
\draw[thick] (-1,0) -- (0,0);
\draw (-.75,-.05) node[above]{$j$};
\draw (-.5,-.5) -- (.5,-.5) -- (.5,.5) -- (-.5,.5) -- (-.5,-.5);
\draw[thick,fill=white] (-.3,-.3) -- (.3,-.3) -- (.3,.3) -- (-.3,.3) -- (-.3,-.3);
\draw (0,0) node {$X$};
\end{tikzpicture}
\]

Obviously this monad is tame for the same reason as in the one-colored case. Hence, by \cite[Theorem 8.1]{batanin-berger}, if $\M$ is a compactly-generated monoidal model category satisfying the monoid axiom, then the Grothendieck construction $\int \Phi$, whose objects are pairs $(R,M)$ where $R$ is a monoid in $\M$ and $M$ is a left $R$-module, possesses a global model structure. If in addition $\M$ is strongly $h$-monoidal, then this model structure is left proper \cite[Theorem 2.11]{batanin-berger}. If $\M$ is only cofibrantly generated, then we still have the global semi-model structure on $\int \Phi$ \cite[Theorem 6.3.1]{white-yau}.

\begin{remark}
The work in this section also proves that, if $A$ is a commutative monoid, then the category of pairs $(R,M)$ where $R$ is an $A$-algebra and $M$ is an $R$-module, possesses the global model structure. For this setting, we work with $A$-modules as our base model category and note that, by \cite[Theorem 4.1]{SS00}, this category of $A$-modules satisfies the pushout product axiom and monoid axiom.
\end{remark}

\begin{remark}
Generalizing the example in this section, one can ask about the category of pairs $(R,M)$ where $R$ is a $P$-algebra and $M$ is an $R$-module, where $P$ is some operad. If $P$-algebras are encoded by a polynomial monad, then Proposition \ref{prop:poly-for-Gr} tells us that this category of pairs is also given by a polynomial monad. However, this polynomial monad will in general not be tame, for $P \neq Ass$. Nevertheless, we still have the global semi-model structure on pairs. 
\end{remark}

\subsection{Example: non-symmetric operads and their algebras}

Our next example is the monad for the Grothendieck construction for the functor $$NO(I)^{op}\to \CAT$$ which associates to an $I$-colored non-symmetric operad its category of algebras. We denote $NOp(I)$ the polynomial monad for $I$-colored non-symmetric operads and by $Gr(NOp(I))$ the corresponding monad for its Grothendieck construction.

The polynomial for $Gr(NOp(I))$ looks like   
\begin{diagram}PBq(I)\sqcup I &\lTo^{s}&PTr^*(I)&\rTo^{p}&PTr(I)&\rTo^{t}&PBq(I)\sqcup I\end{diagram}
where $PBq(I)$ are are planar $I$-bouquets that is $I$-colored planar corollas. A typical picture of the first three corollas is like this:

\[
\begin{tikzpicture}
\draw[thick] (-1,0) -- (0,0);
\draw (-.75,-.05) node[above]{$i$};
\draw (0,0) circle (.5);
\draw[thick,fill=white] (0,0) circle (.3);

\begin{scope}[shift={(3,0)}]
\draw[thick] (-1,0) -- (1,0);
\draw (-.75,-.05) node[above]{$i$};
\draw (.75,-.05) node[above]{$j$};
\draw (0,0) circle (.5);
\draw[thick,fill=white] (0,0) circle (.3);
\end{scope}

\begin{scope}[shift={(6,0)}]
\draw[thick] (-1,0) -- (0,0);
\draw[thick] (0,0) -- (.8,-.6);
\draw[thick] (0,0) -- (.8,.6);
\draw (-.75,-.05) node[above]{$i$};
\draw (.5,.375) node[above]{$j$};
\draw (.5,-.375) node[below]{$k$};
\draw (0,0) circle (.5);
\draw[thick,fill=white] (0,0) circle (.3);
\end{scope}
\end{tikzpicture}
\]

The set $PTr(I)$ is the set of isomorphism classes of planar trees whose edges are colored by elements of $I$ and whose vertices can be circled or boxed. Moreover, the entire tree itself is either boxed or circled. The rules are the following. If a tree is circled then all the vertices of this tree are circled. So the boxed vertices only appear on a boxed tree. The second rule is that such boxed vertices have valency one and are always on the top of the tree. Here is an example of a boxed tree.

\[
\begin{tikzpicture}
\draw (-.7,-1.8) -- (2.6,-1.8) -- (2.6,1.3) -- (-.7,1.3) -- (-.7,-1.8);
\draw[thick] (0,0) -- (-1,0);
\draw[thick] (0,0) -- (1,.7);
\draw[thick] (0,0) -- (1,-.7);
\draw[thick] (1,-.7) -- (1.7,-.2);
\draw[thick] (1,-.7) -- (1.7,-1.2);
\draw[thick,fill=white] (0,0) circle (.3);
\draw[thick,fill=white] (1,.7) circle (.3);
\draw[thick,fill=white] (1,-.7) circle (.3);
\draw[thick,fill=white] (1.7,-.5) -- (2.3,-.5) -- (2.3,.1) -- (1.7,.1) -- (1.7,-.5);
\draw[thick,fill=white] (1.7,-.9) -- (2.3,-.9) -- (2.3,-1.5) -- (1.7,-1.5) -- (1.7,-.9);
\draw (-.5,-.05) node[above]{$i$};
\draw (.4,.25) node[above]{$j$};
\draw (.4,-.275) node[below]{$k$};
\draw (1.4,-.45) node[above]{$l$};
\draw (1.4,-1.05) node[below]{$m$};
\end{tikzpicture}
\]

As usual, the set $PTr^*(I)$ is the set of planar trees as above with one vertex marked. The morphism $p$ forgets the marking. The morphism $t$ on a non-boxed tree contracts the tree to the colored corolla. The target of a boxed tree is the color of the root of the tree. The source of the marked tree is the colored corolla that corresponds to the vertex if the vertex is circled and it is the color of corresponding leaf if it is boxed. The substitution operation is tree insertion to a circled vertex or a grafting of a tree to the leaf of a marked boxed vertex and removing this boxed vertex (this can be considered as another insertion operation to the marked vertex). In particular, a substitution of a boxed tree with stumps results in the formation of virtual boxed vertices. For example, the result of a substitution 

\[
\begin{tikzpicture}
\draw[thick] (0,0) -- (-1,0);
\draw[thick,fill=white] (0,0) circle (.3);
\draw (-.5,-.05) node[above]{$n$};
\draw (-.7,-.55) -- (.55,-.55) -- (.55,.55) -- (-.7,.55) -- (-.7,-.55);
\begin{scope}[shift={(4,0)}]
\draw (-.7,-1.08) -- (1.6,-1.08) -- (1.6,1.08) -- (-.7,1.08) -- (-.7,-1.08);
\draw[thick] (0,0) -- (-1,0);
\draw[thick] (0,0) -- (1,.7);
\draw[thick] (0,0) -- (1,-.7);
\draw[thick,fill=white] (0,0) circle (.3);
\draw[thick,fill=white] (.7,-.78) -- (1.3,-.78) -- (1.3,-.18) -- (.7,-.18) -- (.7,-.78);
\draw[thick,fill=white] (.7,.78) -- (1.3,.78) -- (1.3,.18) -- (.7,.18) -- (.7,.78);
\draw (-.5,-.05) node[above]{$k$};
\draw (.4,.25) node[above]{$i$};
\draw (.4,-.3) node[below]{$n$};
\end{scope}
\draw[->,thin] plot [smooth] coordinates {(0,0) (1.6,-.1) (4,-.8) (5,-.48)};
\end{tikzpicture}
\]

is

\[
\begin{tikzpicture}
\draw (-.7,-1.08) -- (1.6,-1.08) -- (1.6,1.08) -- (-.7,1.08) -- (-.7,-1.08);
\draw[thick] (0,0) -- (-1,0);
\draw[thick] (0,0) -- (1,.7);
\draw[thick] (0,0) -- (1,-.48);
\draw[thick,fill=white] (0,0) circle (.3);
\draw[thick,fill=white] (.7,.78) -- (1.3,.78) -- (1.3,.18) -- (.7,.18) -- (.7,.78);
\draw[thick,fill=white] (1,-.48) circle (.3);
\draw (-.5,-.05) node[above]{$k$};
\draw (.4,.25) node[above]{$i$};
\draw (.4,-.225) node[below]{$n$};
\end{tikzpicture}
\]

Accordingly, the classifiers $Gr(NOp(I))^{Gr(Nop(I))+1}$ will have objects colored planar trees as above with additional coloring of vertices by letters $X$ or $K$. The morphisms are contractions of  internal edges which connect circled $X$-vertices and introducing a new circled $X$-vertex of valency one. Another type of generating morphism is a simultaneous contraction of a group of incoming edges which connect a circled $X$-vertex to  boxed $X$-vertices. This is possible only if all incoming edges of the circled $X$-vertex are connected to boxed $X$-vertices. In this case we obtain a single boxed $X$-vertex after contraction.  For example,

\begin{equation}\label{morphismalgebra}
\begin{tikzpicture}
\draw (-.7,-1.08) -- (1.6,-1.08) -- (1.6,1.08) -- (-.7,1.08) -- (-.7,-1.08);
\draw[thick] (0,0) -- (-1,0);
\draw[thick] (0,0) -- (1,.7);
\draw[thick] (0,0) -- (1,-.7);
\draw[thick,fill=white] (0,0) circle (.3) node{$X$};
\draw[thick,fill=white] (.7,-.78) -- (1.3,-.78) -- (1.3,-.18) -- (.7,-.18) -- (.7,-.78);
\draw[thick,fill=white] (.7,.78) -- (1.3,.78) -- (1.3,.18) -- (.7,.18) -- (.7,.78);
\draw (1,-.48) node{$X$};
\draw (1,.48) node{$X$};
\draw (-.5,-.05) node[above]{$k$};
\draw (.4,.25) node[above]{$m$};
\draw (.4,-.3) node[below]{$n$};
\draw (2.9,0) node[]{$\longrightarrow$};
\begin{scope}[shift={(5,0)}]
\draw[thick] (0,0) -- (-1,0);
\draw[thick,fill=white] (-.3,-.3) -- (.3,-.3) -- (.3,.3) -- (-.3,.3) -- (-.3,-.3);
\draw (0,0) node{$X$};
\draw (-.5,-.05) node[above]{$k$};
\draw (-.7,-.55) -- (.55,-.55) -- (.55,.55) -- (-.7,.55) -- (-.7,-.55);
\end{scope}
\end{tikzpicture}
\end{equation}

We also have a morphism 

\begin{equation}\label{morphismzerocircletozerobox}
\begin{tikzpicture}
\draw[thick] (0,0) -- (-1,0);
\draw[thick,fill=white] (0,0) circle (.3) node{$X$};
\draw (-.45,-.05) node[above]{$i$};
\draw (-.7,-.55) -- (.55,-.55) -- (.55,.55) -- (-.7,.55) -- (-.7,-.55);
\draw (1.7,0) node[]{$\longrightarrow$};
\begin{scope}[shift={(4,0)}]
\draw[thick] (0,0) -- (-1,0);
\draw[thick,fill=white] (-.3,-.3) -- (.3,-.3) -- (.3,.3) -- (-.3,.3) -- (-.3,-.3);
\draw (0,0) node{$X$};
\draw (-.5,-.05) node[above]{$i$};
\draw (-.7,-.55) -- (.55,-.55) -- (.55,.55) -- (-.7,.55) -- (-.7,-.55);
\end{scope}
\end{tikzpicture}
\end{equation}

This monad is not tame, as we now show.

\begin{example}
Let $I$ be a non-empty set of colors. The monad $T=Gr(NOp(I))$, whose algebras are pairs $(O,A)$ where $O$ is a non-symmetric $I$-colored operad and $A$ is an $O$-algebra, is not tame. The final subcategory of $T^{T_{f,g}}$ cannot be discrete, because it contains zigzags of the form

\def\squarex#1#2{
	\draw[fill=white] (#1-.25,#2-.25) -- (#1+.25,#2-.25) -- (#1+.25,#2+.25) -- (#1-.25,#2+.25) -- (#1-.25,#2-.25);
	\draw (#1,#2) node{$X$};
}

\def\squarek#1#2{
	\draw[fill=white] (#1-.25,#2-.25) -- (#1+.25,#2-.25) -- (#1+.25,#2+.25) -- (#1-.25,#2+.25) -- (#1-.25,#2-.25);
	\draw (#1,#2) node{$K$};
}

\[
\begin{tikzpicture}
\draw (0,-.5) -- (0,0);
\draw (0,0) -- (-1.125,.75);
\draw (0,0) -- (0,1);
\draw (0,0) -- (1.125,.75);

\draw[fill=white] (0,0) circle (.25) node{$X$};
\squarex{-1.125}{1};
\squarex{0}{1};
\squarek{1.125}{1};

\draw (2.3,.3) node[]{$\longleftarrow$};

\begin{scope}[shift={(5,0)}]
\draw (0,-.5) -- (0,0);
\draw (0,0) -- (-1,1);
\draw (0,0) -- (1,1);
\draw (-1,1) -- (0,2);
\draw (-1,1) -- (-2,2);

\draw[fill=white] (0,0) circle (.25) node{$X$};
\draw[fill=white] (-1,1) circle (.25) node{$X$};
\squarex{-1.75}{2};
\squarex{-.25}{2};
\squarek{.75}{1};
\end{scope}

\draw (7,.3) node[]{$\longrightarrow$};

\begin{scope}[shift={(9,0)}]
\draw (0,-.5) -- (0,0);
\draw (0,0) -- (-1,1);
\draw (0,0) -- (1,1);

\draw[fill=white] (0,0) circle (.25) node{$X$};
\squarex{-.75}{1};
\squarek{.75}{1};
\end{scope}
\end{tikzpicture}
\]
\end{example}

Fortunately, this $T$ is quasi-tame, by Proposition \ref{propositiongrnopquasitame}, so we will still obtain a model structure on $\int \Phi$. This example was the motivation for our work on quasi-tameness in Section \ref{sec:quasi-tame}.

\subsection{Example: non-symmetric operads and their left modules} \label{subsec:Gr(Op,Mod)}

In order to study the homotopy theory of operadic modules and bimodules, it is important to study the category of pairs $(O,M)$, where $O$ is a non-symmetric operad and $M$ is a left $O$-module. We cannot apply Proposition \ref{prop:poly-for-Gr} to this situation directly. Instead, we produce a new operad $O'$ whose algebras are left $O$-modules.

\begin{lemma}
Let $O$ be a one-color non-symmetric operad. Then left $O$-modules are algebras over $O'$ where $O'(n_1,\dots ,n_k; n)$ is $O_k$ if $n_1+\dots+n_k = n$, and empty otherwise.
\end{lemma}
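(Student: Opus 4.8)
The plan is to exhibit an explicit isomorphism of categories between left $O$-modules and $O'$-algebras by unwinding the two definitions and checking that the structure maps correspond. First I would recall that a left $O$-module $M$ is a sequence $M = \{M(n)\}_{n\ge 0}$ together with structure maps
\[
O_k \otimes M(n_1) \otimes \cdots \otimes M(n_k) \longrightarrow M(n_1+\cdots+n_k),
\]
one for each $k\ge 0$ and each tuple $(n_1,\dots,n_k)$, which are associative and unital with respect to the operad composition in $O$. On the other side, an $O'$-algebra is an object $A$ of $\M$ equipped with action maps $O'(n_1,\dots,n_k;n)\otimes A^{\otimes k}\to A$; since $O'(n_1,\dots,n_k;n)=O_k$ precisely when $n=n_1+\cdots+n_k$ and is initial otherwise, the only nontrivial components of this action are $O_k\otimes A^{\otimes k}\to A$, and the ``$n=n_1+\cdots+n_k$'' bookkeeping is exactly the bookkeeping in the module structure maps.

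The key steps, in order, are: (1) Define the functor on objects: given a left $O$-module $M$, set $A=\coprod_{n\ge 0} M(n)$? No — more precisely, one should keep the grading, so regard $\M$-objects with an $\NN$-grading, or equivalently note that $O'$ is itself a colored operad with color set $\NN$ and its algebras are $\NN$-graded objects; then an $O'$-algebra is precisely a sequence $\{A(n)\}$ with maps $O_k\otimes A(n_1)\otimes\cdots\otimes A(n_k)\to A(n_1+\cdots+n_k)$. (2) Check that the operad composition in $O'$ — which is forced to be the composition in $O$ on the $O_k$-components, with colors adding up — translates exactly into the associativity axiom for a left $O$-module, and that the unit of $O'$ (the identity in $O_1=O(1)$ sitting in $O'(n;n)$) gives the unit axiom. (3) Conversely, produce the inverse functor sending an $O'$-algebra to the module with the same underlying graded object and the same action maps. (4) Check both constructions are functorial in morphisms and are mutually inverse; this is immediate once the structure maps are matched. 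One should also verify the claimed formula genuinely defines a colored operad, i.e. that partial composition $O'(\dots;n)\otimes O'(\dots;n_i)\to O'(\dots;n)$ is well-defined: when we plug a $\{n_i\}$-to-$n_i$ operation (lying in $O_{\ell}$, so coming from a length-$\ell$ decomposition of $n_i$) into the $i$-th slot of a $\{n_1,\dots,n_k\}$-to-$n$ operation (in $O_k$), the output must be a decomposition of $n$ of length $k-1+\ell$, and the resulting element is the operadic composite in $O$; associativity and unitality of $O'$ then follow from those of $O$.

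The main obstacle — really the only subtle point — is verifying that $O'$ is a well-defined (non-symmetric) operad and that its composition is genuinely \emph{forced} to be inherited from $O$: one must check that whenever a composite of colors is consistent (the target colors line up so the composition is not into an initial object), the arities $k,\ell,\dots$ of the relevant $O$-components are exactly the ones whose operadic composite is defined in $O$, so there is no choice and no obstruction. Concretely, if $O'(n_1,\dots,n_k;n)=O_k$ and $O'(m^{(i)}_1,\dots,m^{(i)}_{\ell_i};n_i)=O_{\ell_i}$, the composite lands in $O'(\dots; n)$ with the $m^{(i)}_j$ listed in order, and the underlying object is $O_{\ell_1+\cdots+\ell_k}$, which receives the composite $\gamma_O(o;o_1,\dots,o_k)$; the constraint $n=\sum n_i$ and $n_i=\sum_j m^{(i)}_j$ guarantees the target colors match. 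Once this is in place, all the operad axioms for $O'$ reduce termwise to the corresponding axioms for $O$, and the equivalence of algebra categories is a formal consequence. I expect this to occupy only a short paragraph in the actual write-up.
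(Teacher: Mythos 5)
Your proof is correct and takes the same approach as the paper. The paper's argument is a one-line direct verification: the components of an $O'$-algebra structure, $O'(n_1,\dots,n_k;n)\otimes A_{n_1}\otimes\cdots\otimes A_{n_k}\to A_n$, are nontrivial precisely when $n=n_1+\cdots+n_k$ and then coincide with the left $O$-module structure maps $O_k\otimes A_{n_1}\otimes\cdots\otimes A_{n_k}\to A_{n_1+\cdots+n_k}$. You spell out the same matching with more care (including the aside correcting the coproduct misstep in favor of keeping the $\NN$-grading, and the extra verification that $O'$ is a well-defined $\NN$-colored operad, which the paper takes for granted), but the mathematical content and the route are the same.
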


\begin{proof}
One can check directly that the structure of an $O'$-algebra, i.e., morphisms of the form $O'(n_1,\dots,n_k; n) \otimes A_{n_1} \otimes \dots \otimes A_{n_k} \to A_n$ is the same as an $O$-algebra structure $O_k \otimes A_{n_1} \otimes \dots \otimes A_{n_k} \to A_n$.
\end{proof}

It is not hard to see that this construction can be generalized to $I$-colored non-symmetric operads. If $O$ is an $I$-colored non-symmetric operad, we can produce a new operad $O'$ whose algebras are left $O$-modules. The set of colors of $O'$ is the set of $I$-bouquets, that is the set of $k+1$-tuples $\vec{i} = (i_1,\ldots,i_k;i)$ in $I$ for varying $k \geq 0$. $O'\left(\vec{j}_1,\ldots,\vec{j}_k;\vec{j}\right)$ is given by $O(j_1,\ldots,j_k;j)$ if $\vec{j}_m = (j_{m1},\ldots,j_{ml_1};j_m)$ for $m=1,\ldots,k$ and $\vec{j} = (j_{11},\ldots,j_{ml_m};j)$. It is empty otherwise.

\subsection{Higher Extensions}\label{subsectionhigherextensions}
The cases of $Gr(Mon)$ and $Gr(NOp(I))$ fit into a general scheme. Let $T$ be a polynomial monad with the set of objects $I$ and let $T^+$ be its Baez-Dolan $+$-construction \cite{batanin-kock-joyal}. First, we recall from \cite{batanin-berger} that the classifier for $T^+$ is particularly easy to understand. For example, when $T$ is the monad for monoids, the classifier for $T^+$ can be again realized as linear trees, but now allowing circles for groupings of adjacent vertices. Each linear tree with circles can be unpacked into a planar tree as illustrated below. It follows that the classifier for $T^+$ is the same as for non-symmetric operads. The following picture illustrates:

\[
\begin{tikzpicture}
\draw (0,0) -- (4,0);
\draw[fill] (.5,0) circle (1.5pt);
\draw[fill] (1.5,0) circle (1.5pt);
\draw[fill] (2.5,0) circle (1.5pt);
\draw[fill] (3.5,0) circle (1.5pt);
\draw (1,0) ellipse (.8 and .4);
\draw (3,0) ellipse (.8 and .4);
\draw (2,0) ellipse (1.9 and .7);
\draw (2.5,0) circle (.2);

\begin{scope}[shift={(7,-.2)}]
\draw (0,-.3) -- (0,0);
\draw (0,0) -- (-.7,.7);
\draw (0,0) -- (.7,.7);
\draw (-.7,.7) -- (-1.2,1.2);
\draw (-.7,.7) -- (-.2,1.2);
\draw (.7,.7) -- (.2,1.2);
\draw (.7,.7) -- (1.2,1.2);
\draw (.2,1.2) -- (.2,1.7);
\draw[fill] (0,0) circle (1.5pt);
\draw[fill] (-.7,.7) circle (1.5pt);
\draw[fill] (.7,.7) circle (1.5pt);
\draw[fill] (.2,1.2) circle (1.5pt);
\end{scope}
\end{tikzpicture}
\]

\[
\begin{tikzpicture}
\draw (0,0) -- (2,0);
\draw[fill] (.5,0) circle (1.5pt);
\draw[fill] (1.5,0) circle (1.5pt);
\draw (1,0) ellipse (.8 and .4);

\begin{scope}[shift={(5,-.2)}]
\draw (0,-.3) -- (0,0);
\draw (0,0) -- (-.7,.7);
\draw (0,0) -- (.7,.7);
\draw[fill] (0,0) circle (1.5pt);
\end{scope}
\end{tikzpicture}
\]

\[
\begin{tikzpicture}
\draw (0,0) -- (3,0);
\draw[fill] (.5,0) circle (1.5pt);
\draw[fill] (1.5,0) circle (1.5pt);
\draw[fill] (2.5,0) circle (1.5pt);
\draw (1,0) ellipse (.8 and .4);
\draw (1.5,0) ellipse (1.4 and .6);

\begin{scope}[shift={(6,-.2)}]
\draw (0,-.3) -- (0,0);
\draw (0,0) -- (-.7,.7);
\draw (0,0) -- (.7,.7);
\draw (-.7,.7) -- (-1.2,1.2);
\draw (-.7,.7) -- (-.2,1.2);
\draw[fill] (0,0) circle (1.5pt);
\draw[fill] (-.7,.7) circle (1.5pt);
\end{scope}
\end{tikzpicture}
\]

For a general polynomial monad $T$, given by $I \gets B \to E \to I$, the objects of $T^+$ are elements of $B$, and the operations of $T^+$ are trees on $B$, denoted $tr(B)$. Such a tree $q$ (with nodes in $B$), represents an operation taking the sources of $q$ (the input edges $s_1(q),\dots,s_n(q)$) to the target of $q$ (the root). 

A $T^+$-algebra $A$ is a collection of objects, one for each $b\in B$, such that, for every tree in $tr(B)$, there is a composition rule $A(s_1(q))\otimes \dots \otimes A(s_n(q))\to A(t(q))$.

Given a $T^+$-algebra $A$, we can define an $A$-algebra $C$ to be a $T$-algebra twisted by $A$, meaning there are objects $(C(i))_{i\in I}$ with $A(b)\otimes C(s_1(b))\otimes \dots \otimes C(s_k(b))\to C(t(b))$. Such objects can also be defined via the endomorphism $T^+$-algebra, as in \cite{batanin-berger}.

Then there is a canonical morphism of polynomial monads
$$\phi: T^+\to SOp(I)$$
which, according to Proposition \ref{prop:poly-for-Gr}, means there is a polynomial monad $Gr(T^+)$ for pairs $(P,A)$ where $P$ is an algebra of $T^+$ and $A$ is an algebra of $P$.

This morphism is constructed as follows. Recall that the polynomial for $T^+$ has a form

\begin{diagram}B&\lTo^{s}&tr^*(B)&\rTo^{p}&tr(B)&\rTo^{t}&
	B\end{diagram} 

where $tr(B)$ is the set of isomorphism classes of rooted trees decorated by elements from $B.$   
The morphism $\phi$ is represented by the following diagram (recall Proposition \ref{prop:poly-for-Gr}): 
\begin{diagram}B&\lTo^{s}&tr^*(B)&\rTo^{p}&tr(B)&\rTo^{t}&
	B\\
	\dTo^{c}&&\dTo&&\dTo_{\psi}&&\dTo_{c}\\
	Bq(I)&\lTo& ORTr^*(I) &\rTo & ORTr(I)&\rTo & Bq(I)\end{diagram}

Here $c$ sends an operation $b$ to its bouquet. The morphism $\psi$ sends a tree decorated by elements of $B$ to the same tree decorated by bouquets of elements.

\subsection{Further examples of the Grothendieck construction}

There are many ways to encode algebraic structure that fall under the banner of the Grothendieck construction. For example, to every club $T$ we can associate a category of $T$-algebras \cite{kelly}. The resulting Grothendieck construction has objects $(T,A)$ where $T$ is a club and $A$ is a $T$-algebra. We can do the same for Lawvere theories $L$, since every one has a category of $L$-modules. We can also do this for PRO(P)s. In general, categories of fibers in these situations are not known to have model structures, so the machinery of \cite{harpaz-prasma-integrated} cannot be used to produce global model structures. Unfortunately, our machinery cannot be used either, because these categories $\int \Phi$ are not algebras over a set-valued polynomial, though the category of clubs can be encoded via a category-valued polynomial, and a suitable extension of our machinery could be used to produce global (semi-)model structures in these settings, over a nicely behaved base model category. An easier example is the arrow category $\M^2$ of a model category $\M$. The codomain functor yields a Grothendieck construction \cite[Example 4.1]{cagne-mellies}, and this example is related to the theory of Smith ideals of ring spectra \cite{hovey-smith, white-yau-5, white-yau6}.

\section{Quasi-tame polynomial monads}\label{sec:quasi-tame}

In this section, we prove a general theorem that allows us to transfer model structures (and left properness) to categories of $T$-algebras, where $T$ is a quasi-tame polynomial monad, a generalization of the notion of tame from \cite{batanin-berger}. Motivated by applications to $n$-operads, we actually work in the context of quasi-tame substitudes. But we start with polynomial monads for simplicity. After setting up the filtration, we prove the transfer result. Then we prove that most of the monads from the previous section are quasi-tame, and this allows us to deduce the existence of the global model structure, consequences of which are explored in \cite{companion}.

We start this section by reviewing and generalizing the filtration of \cite{batanin-berger} for free-algebra extensions over polynomial monads. Much of our exposition, and many of our definitions, are drawn from \cite{batanin-berger}. The purpose of the filtration is to make the conditions of Theorems \ref{admissible} and \ref{adequate} easier to check. We begin with a reminder of classifiers, picking up where Section \ref{subsec:classifiers} left off.

\subsection{Classifier for free algebra extensions}\label{freealgext}  

Let $T$ be a finitary monad. In order to transfer a model structure to $T$-algebras, we must compute pushouts like (\ref{diagram:pushout}). The starting data is a span in the underlying category $\M$:
\begin{align} \label{diagram:pushout-in-M}
\xymatrix{
K \ar[r]^f \ar[d]_g & L \\
U(X) & }
\end{align}
where $X$ is a $T$-algebra. It is observed in \cite{batanin-berger} that to compute such a pushout one can construct a monad $T_{f,g}$ whose algebras are 5-tuples $(X,K,L,f,g)$ assembling to diagrams as above, together with a morphism of the monads $a: T_{f,g}\to T$ such that for any symmetric monoidal category $\M$, the left adjoint to the restriction functor
$$a_!:Alg_{T_{f,g}}(\M)\to Alg_T(\M)$$
is exactly the pushout $P$ of (\ref{diagram:pushout}). 

Moreover, if $T$ is a polynomial monad in $\Set$ the monad $T_{f,g}$ is polynomial and $a$ is a cartesian morphism of polynomial monads. Hence, we can apply the theory of classifiers and obtain $U(P)$ (coming from the pushout (\ref{diagram:pushout})) as a colimit over a classifier $T^{T_{f,g}}.$ 

It was observed that if $T$ is tame the classifier $T^{T_{f,g}}$ has a final subcategory $\mathbf{t}$ with a canonical filtration $\mathbf{t}^{(n)}$ such that $U(P)$ is a sequential colimit in $\M$ of objects $P_n$, which are themselves colimits over  $\mathbf{t}^{(n)}.$ Finally, $P_n$ itself is a pushout of the form  
\begin{align*}
\xymatrix{Q_n\ar[r]   \ar[d] \po & L_n\ar[d]\\ P_{n-1} \ar[r] & P_n}
\end{align*}
where $Q_n$ is a pushout over a punctured $n$-dimensional cube.
This allows us to study the homotopical properties of the pushout for $P$ using the monoid and pushout-product axioms. 

If $T$ is not tame one can not immediately apply the results of \cite{batanin-berger}, but we will see that in our case of interest most of the discussion from \cite{batanin-berger} goes through. The same observation applies even to some monads which are not polynomial in $\Set$ but are obtained as a canonical lifting of operads in $\Set$ as mentioned in Section \ref{sec:non-poly}.

Recall the polynomial monad $T+1$ (Section \ref{subsec:classifiers}) for pairs $(A,L)$, where $A$ is a $T$-algebra and $L \in \M$. The classifier $T^{T+1}$ encodes semi-free coproducts $T(L)\coprod A$, taken in $T$-algebras. There is a canonical morphism of monads $T+1 \to T$, whose induced restriction functor is given by $A \to (A,U(A))$. This semi-free coproduct is a special case of (\ref{diagram:pushout}) where $K$ is the initial object of $\M$. We define $T+2$ to be $(T+1)+1$.

There are morphisms of polynomial monads over $T$ 
$$T+1 \to T+2 \to T_g.$$ 
The first morphism is the identity on the $T$ summand of $T+1$  and sends the summand $1$ to the second $1$ of  $T+2$. The second morphism was described before. This induces a morphism of classifiers 
$$T^{T+1} \to T^{T+2} \to T^{T_g}.$$
It is not hard to see that explicitly $T^{T+1}$ can be realized as a full subcategory of $T^{T+2}$ which consists of objects with $X$ and $L$ edges only.  
 
Analogously, there is a morphism of monads  
 $$T+1 \to T+2 \to T_f$$ 
  and a morphism of classifiers
  $$T^{T+1} \to T^{T+2} \to T^{T_f}.$$

These morphisms are used to relate the classifiers $T^{T+1}, T^{T_f}, T^{T_g}$, and $T^{T_{f,g}}$. Having recalled the necessary notation from \cite{batanin-berger}, we are ready to give the substitude version.

\subsection{Classifiers for substitudes}

Let $(P,A)$ be a $\Sigma$-free substitude in $\Set$ (Definition \ref{defn:substitude}). Let ${\mathbb P}_{f,g}$ be the category whose objects are quintuples
$(X,K,L,g,f),$  where $X$ is a $P$-algebra, $K,L$  are objects in $[A,\Set]$ and $g:K\rightarrow \eta^*(X),\,f:K\rightarrow L$ are morphisms in $[A,\Set].$ There is an obvious forgetful functor
$$\Uu_{f,g}:{\mathbb P}_{f,g} \rightarrow [A_0,\Set]\times [A_0,\Set]\times [A_0,\Set],$$
taking the quintuple $(X,K,L,f,g)$ in $\mathbb{P}_{f,g}$ to the triple $(\eta_0^*(X),i^*(K),i^*(L)),$ 
where $i:A_0\to A$ is the inclusion of the maximal discrete subcategory of $A,$  and $\eta_0$ is the composite of the unit $\eta$ and $i$ as usual.  

\begin{pro}\label{Cart}

\begin{itemize} \item[(i)] The functor $\Uu_{f,g}$ is monadic and the induced monad $\Pp_{f,g}$ is polynomial;
\item[(ii)] There is a commutative square of adjunctions

\begin{equation*}\label{adjointext} \xygraph{!{0;(3.5,0):(0,.5)::}
{{\mathbb P}_{f,g}}="p0" [r] {\Alg_P}="p1" [d] {[A_0,\Set]}="p2" [l] {[A_0,\Set] \times [A_0,\Set]\times [A_0,\Set]}="p3"
"p0":@<-1ex>@{<-}"p1"_-{R_P}|-{}="cp":@<1ex>"p2"^-{\eta_0^*}|-{}="ut":@<1ex>"p3"^-{\Delta_{}}|-{}="c":@<-1ex>@{<-}"p0"_-\Uu_{f,g}|-{}="us"
"p0":@<1ex>"p1"^-{L_P}|-{}="dp":@<-1ex>@{<-}"p2"_-{(\eta_0)_!}|-{}="ft":@<-1ex>@{<-}"p3"_-{- \sqcup -}|-{}="d":@<1ex>"p0"^-\Ff_{f,g}|-{}="fs"
"dp":@{}"cp"|-{\perp} "d":@{}"c"|-{\perp} "fs":@{}"us"|-{\dashv} "ft":@{}"ut"|-{\dashv}}
\end{equation*}

\noindent in which  $\Delta$  is the diagonal and $R_P$ is given by $$R_P(Y)= (Y,\eta^*(Y),\eta^*(Y),id_{\eta^*(Y)},id_{\eta^*(Y)}).$$

\item[(iii)]The left adjoint $L_P$ to $R_P$ is given by the following pushout in $\Alg_P$:

\begin{align*} \label{free cofibration}
\xymatrix{
\eta_!(K) \ar[r]^{\eta_!(f)} \ar[d]_{\hat{g}} & \eta_!(L) \ar[d]^{} \\
X \ar[r]^{} & L_P(X,K,L,g,f)
}
\end{align*}
\noindent in which $\hat{g}$ is the mate of $g$.
\end{itemize}

\end{pro}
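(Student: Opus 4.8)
The plan is to reduce everything to the polynomial-monad case that was already handled in Section~\ref{subsec:classifiers} and in \cite{batanin-berger}, by exhibiting $\mathbb{P}_{f,g}$ as the category of algebras of a polynomial monad built from the polynomial presentation of $(P,A)$. Since $(P,A)$ is $\Sigma$-free, Definition~\ref{defn:free action} provides the bimodule $d(P)$ on $(\MM A)^{op}\times A$, and $P$ is a left Kan extension of $d(P)$ along $\epsilon\times 1$; this is exactly the data needed to write $P$ as (the lift to $\M$ of) a $\Set$-valued polynomial monad with color set the objects of $A_0$, operations indexed by $d(P)$, and input arities recorded by $(\MM A)$. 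First I would make this identification precise: the forgetful functor $\Alg_P\to [A_0,\Set]$ is monadic with a polynomial monad, and then the quintuple data $(X,K,L,g,f)$ is just the algebra data for an enlarged polynomial whose colors are three disjoint copies of $\Ob(A_0)$ (for the $X$-part, the $K$-part, the $L$-part), with operations being those of $P$ together with the two ``structural'' unary operations encoding $g\colon K\to \eta^*(X)$ and $f\colon K\to L$. This is the same bookkeeping used to build $T_{f,g}$ from $T$ in \cite{batanin-berger}, only now carried out over a non-discrete category of colors, which is legitimate because $\Sigma$-freeness lets us replace $A$ by the discrete $A_0$ at the level of colors while keeping track of $A$ only through the presheaf categories $[A,\Set]$.

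For part~(i), once this polynomial $\Pp_{f,g}$ is written down, monadicity of $\Uu_{f,g}$ follows from Beck's monadicity theorem: $\Uu_{f,g}$ has a left adjoint $\Ff_{f,g}$ (free algebra on a triple of $A_0$-presheaves), it creates coequalizers of $\Uu_{f,g}$-split pairs because the same is true for $\Alg_P\to[A_0,\Set]$ and the structural operations $f,g$ impose no further limits, and it is obviously conservative. That the induced monad \emph{is} polynomial is immediate from the explicit polynomial just constructed; the finite-fiber condition on $p$ is inherited from that of $P$ together with the fact that the two new operations are unary. For part~(ii), the commutative square of adjunctions is a formal consequence: the outer adjunction $\Ff_{f,g}\dashv\Uu_{f,g}$ is the free–forgetful one just described; the right-hand adjunction $L_P\dashv R_P$ is defined by the stated formulas, with $R_P(Y)=(Y,\eta^*(Y),\eta^*(Y),\mathrm{id},\mathrm{id})$ manifestly functorial and $L_P$ defined to be left adjoint to it; the bottom adjunction $(\eta_0)_!\dashv\eta_0^*$ and the ``diagonal vs.\ coproduct'' adjunction $(-\sqcup-)\dashv\Delta$ are standard; and commutativity on both the left-adjoint and right-adjoint sides is checked by chasing the definitions, using $\eta_0^*R_P(Y)=(\eta^*(Y),\eta^*(Y),\eta^*(Y))=\Delta(\eta^*(Y))$ on one side and the corresponding colimit identity on the other. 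One should note here that strictly the square commutes up to canonical isomorphism, which is all that is needed.

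For part~(iii), I would verify that the pushout formula for $L_P$ indeed gives the left adjoint to $R_P$ by a direct computation of $\Alg_P$-morphisms out of the pushout: a map from the pushout $L_P(X,K,L,g,f)$ to a $P$-algebra $Y$ is the data of a $P$-algebra map $X\to Y$ and a $P$-algebra map $\eta_!(L)\to Y$ agreeing on $\eta_!(K)$, where the two composites to be matched are $\hat g$ (the mate of $g$, i.e.\ the adjunct of $g\colon K\to\eta^*(X)$ under $\eta_!\dashv\eta^*$) and $\eta_!(f)$; unwinding the adjunctions $\eta_!\dashv\eta^*$ this is precisely a compatible pair $(X\to Y,\ L\to\eta^*(Y))$ with $K\to\eta^*(Y)$ factoring through both $g$ and $f$, which is exactly a morphism $(X,K,L,g,f)\to R_P(Y)$ in $\mathbb{P}_{f,g}$. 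The main obstacle, and the step deserving real care, is the first one: making the passage from the $\Sigma$-free substitude $(P,A)$ to an honest $\Set$-valued polynomial monad genuinely rigorous, in particular checking that discretizing the colors to $\Ob(A_0)$ while encoding all of $A$'s morphisms via the structural data does not lose or distort the algebras, and that the resulting polynomial still satisfies the finite-fiber hypothesis. Everything after that is the same diagram chase as in \cite{batanin-berger}, adapted to the substitude notation recalled in Section~\ref{subsec:substitudes}.
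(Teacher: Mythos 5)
Your proposal takes the same route as the paper: the paper's own proof is a one-line reference to Proposition~7.2 of \cite{batanin-berger}, and you have essentially reconstructed the argument that reference carries out, adapted to the substitude setting via $\Sigma$-freeness. The overall strategy---exhibit an explicit polynomial whose algebras are the quintuples, then apply monadicity, the formal square of adjunctions, and the universal property of pushouts---is exactly what the paper has in mind, and your treatment of parts (ii) and (iii) is correct.

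There is one small but genuine omission in part (i). When you list the operations of the polynomial generating $\Pp_{f,g}$, you include the operations of $P$ (acting on the $X$-colors) and the two families of structural unary operations encoding $g$ and $f$. But $K$ and $L$ live in $[A,\Set]$, while $\Uu_{f,g}$ lands in $[A_0,\Set]^3$, so the polynomial must also contain unary operations on the $K$- and $L$-colors indexed by the morphisms of $A$, in order that a $\Pp_{f,g}$-algebra recover the $A$-presheaf structure on $K$ and $L$ rather than just the underlying $A_0$-collections. The paper itself records these generators in the description of $\Pp^{\Pp_{f,g}}$ immediately after the proposition (``unary operations on $K$ and $L$ edges corresponding to morphisms of $A$''), and they are also needed so that the $F$- and $G$-generators can be required to be natural in $A$. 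With these operations added, your monadicity argument and the finite-fiber check go through unchanged (all the added operations are unary), and the rest of the proposal stands as written.
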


\begin{proof} The proof is completely analogous to the proof of Proposition 7.2 in \cite{batanin-berger}
\end{proof} 

We will need an explicit description of the classifier $\Pp^{\Pp_{f,g}}.$ It coincides almost verbatim to the description of $T^{T_{f,g}}$  given in \cite[Section 7.4]{batanin-berger}. So, we recall it briefly.

The objects of $\Pp^{\Pp_{f,g}}$ are corollas decorated by the elements of $B=\Pp(1)$ with its $|p^{-1}(b)|$ incoming edges colored by one of the three colors $X, K, L$:
\begin{equation}\label{tricolor} \xygraph{!{0;(.7,0):(0,1)::} {\scriptstyle{b}} *\xycircle<6pt>{-} (-[l(2)u] {\scriptstyle{K}},-[ul] {\scriptstyle{X}},-[u] {\scriptstyle{X}},-[ru] {\scriptstyle{X}},-[r(2)u] {\scriptstyle{L}},-[d(.9)])} \end{equation}
These incoming edges will be called  $X$-edges, $K$-edges or $L$-edges accordingly.

Morphisms of $\Pp^{\Pp_{f,g}}$ can be described in terms of generators and relations. There are three types of generators. First, we have the generators coming from the $P$-algebra structure on $X$-colored edges and unary operations on $K$ and $L$ edges corresponding to morphisms of $A.$ The relations between these generators witness the relations between operations in $(P,A)$. We will call these generators \emph{$X$-generators}.

The next type of generators corresponds to the morphism $f:K\rightarrow L.$ Such a generator simply replaces a $K$-edge with an $L$-edge in the corolla. Generators of this kind will be called \emph{$F$-generators}. 

Finally, we have generators corresponding to $g:K\rightarrow \eta^*(X).$ Such a generator replaces a $K$-edge with an $X$-edge. Generators of this kind will be called \emph{$G$-generators}. The following lemma asserts how the generators commute and distribute over each other.

\begin{lem} \label{dist}  \begin{enumerate} \item Any span $b\stackrel{\phi}{\leftarrow} a\stackrel{\psi}{\rightarrow}a'$ in which $\phi$ is an $F$-generator (resp. $G$-generator) and  $\psi$ is an $X$-generator, completes uniquely to a commutative square
\begin{equation}\label{relation1}
\xygraph{{a}="p0" [r] {a'}="p1" [d] {b'}="p2" [l] {b}="p3" "p0":"p1"^-{\psi}:"p2"^-{\phi'}:@{<-}"p3"^-{\psi'}:@{<-}"p0"^-{\phi}} 
\end{equation}
in which $\phi'$ is an $F$-generator (resp. $G$-generator) and $\psi'$ is an $X$-generator. 

 \item Any cospan $b\stackrel{\psi'}{\to} b'\stackrel{\phi'}{\ot}a'$ in which $\phi'$ is an $F$-generator  and  $\psi'$ is an $X$-generator can be completed  to a commutative square
\begin{equation}\label{relation1}
\xygraph{{a}="p0" [r] {a'}="p1" [d] {b'}="p2" [l] {b}="p3" "p0":"p1"^-{\psi}:"p2"^-{\phi'}:@{<-}"p3"^-{\psi'}:@{<-}"p0"^-{\phi}} 
\end{equation}
in which $\phi$ is an $F$-generator and $\psi$ is an $X$-generator.

\item Any sequence of morphisms $a\stackrel{\phi}{\rightarrow} b\stackrel{\psi}{\rightarrow}b'$ in which $\phi$ is an $X$-generator and $\psi$ is an $F$-generator (resp. $G$-generator) can be completed to a commutative square 
\begin{equation}\label{relation 2}
\xygraph{{a}="p0" [r] {a'}="p1" [d] {b'}="p2" [l] {b}="p3" "p0":"p1"^-{\psi'}:"p2"^-{\phi'}:@{<-}"p3"^-{\psi}:@{<-}"p0"^-{\phi}} 
\end{equation}
in which $\phi'$ is an $X$-generator  and $\psi'$ is an $F$-generator (resp. $G$-generator). 
\item Any sequence of morphisms $a\stackrel{\phi}{\rightarrow} b\stackrel{\psi}{\rightarrow}b'$ in which $\phi$ is an $F$-generator and $\psi$ is an $X$-generator can be completed  to a commutative square
\begin{equation}\label{relation 2}
\xygraph{{a}="p0" [r] {a'}="p1" [d] {b'}="p2" [l] {b}="p3" "p0":"p1"^-{\psi'}:"p2"^-{\phi'}:@{<-}"p3"^-{\psi}:@{<-}"p0"^-{\phi}} 
\end{equation}
in which $\phi'$ is an $F$-generator  and $\psi'$ is an $X$-generator. 

\end{enumerate} 

\end{lem}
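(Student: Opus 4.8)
The plan is to prove Lemma \ref{dist} by unwinding the combinatorial description of the classifier $\Pp^{\Pp_{f,g}}$ recalled just above, following verbatim the proof of the analogous statement for $T^{T_{f,g}}$ in \cite[Section 7.4]{batanin-berger}. The key point is that an object of the classifier is a corolla with vertex decorated by some $b\in B=\Pp(1)$ and incoming edges colored by $X$, $K$ or $L$, and the three flavors of generating morphisms act on \emph{disjoint} pieces of this data: an $X$-generator either performs an operadic substitution among $X$-colored edges (replacing a bunch of $X$-colored corollas grafted at an $X$-edge by their composite, governed by the substitution in $(P,A)$) or applies a unary arrow of $A$ to a single $K$- or $L$-edge, whereas an $F$-generator recolors one specific $K$-edge to an $L$-edge, and a $G$-generator recolors one specific $K$-edge to an $X$-edge. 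Since recoloring a single edge and performing a substitution elsewhere are operations on independent coordinates, they commute in an essentially unique way; this is the content of all four items.

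Concretely, I would argue item by item. For (1): given $b\stackrel{\phi}{\leftarrow}a\stackrel{\psi}{\rightarrow}a'$ with $\phi$ an $F$- (resp.\ $G$-)generator recoloring a distinguished $K$-edge $e$, and $\psi$ an $X$-generator, one checks that $e$ survives in $a'$ (an $X$-generator never touches a $K$-edge except to substitute an $X$-corolla \emph{into} an $X$-edge, which leaves $K$-edges intact, or to hit a $K$-edge by a unary $A$-arrow, which still produces a $K$-edge), so one may recolor the image of $e$ in $a'$ to get $b'$, defining $\phi'$; and one may perform on $b$ the $X$-generator ``the same'' substitution, which still makes sense because recoloring $e$ did not disturb the $X$-edges $\psi$ acts on; commutativity and uniqueness of the resulting square follow because the two operations are supported on disjoint edges and the relations among $X$-generators are exactly the relations in $(P,A)$. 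The one case requiring a word of care is when $\psi$ is a unary $A$-arrow applied to the very edge $e$ that $\phi$ recolors — but an $A$-arrow only applies to $K$- or $L$-edges, so after applying it to $e$ it is still a $K$-edge, and one recolors as before; the square commutes by functoriality of $f$ (resp.\ $g$) with respect to the $A$-action, which holds since $f$ and $g$ are morphisms in $[A,\Set]$. Items (2), (3), (4) are of the same nature: (3) and (4) are the ``composable'' versions asserting that an $X$-generator followed by an $F$/$G$-generator (and vice versa) fits into a commuting square with the roles swapped, and (2) is the cospan-completion statement for $F$-generators and $X$-generators; each is handled by the same disjoint-support bookkeeping, and one should note that (4) as stated is literally (3) specialized, included presumably for notational convenience later.

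The main obstacle — and it is a mild one — is making precise the case analysis for an $X$-generator: these come in two kinds (operadic substitution at an $X$-edge, versus a unary $A$-morphism on a $K$- or $L$-edge), and one must verify the distributivity claim in each sub-case and check that the ``unique completion'' is genuinely unique. Uniqueness is where one invokes that the target corolla and the coloring of its edges are completely determined by which edge was recolored and which substitution was performed, together with the coherence (associativity/equivariance/unitality) of the substitude $(P,A)$ encoded in the $X$-generator relations. Since $(P,A)$ is assumed $\Sigma$-free, there are no hidden symmetries to spoil uniqueness. The cleanest way to present this is to say that the proof is ``completely analogous to the proof of \cite[Lemma 7.5 or the corresponding distributivity lemma]{batanin-berger}'' after translating colored-operad trees into substitude corollas, and then to spell out only the one new wrinkle, namely the interaction of $F$- and $G$-generators with the non-discrete part of the category $A$ (the unary $A$-morphisms), which does not appear in the polynomial-monad setting of \cite{batanin-berger} where the color category is discrete.
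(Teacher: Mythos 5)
The paper itself does not supply a proof of Lemma~\ref{dist}: the lemma is stated and then the text immediately moves on (``We now proceed like in Section 7 of \cite{batanin-berger}\dots''), so the intended justification is precisely the kind of routine disjoint-support analysis you carry out, by analogy with the polynomial-monad case in \cite[Section 7.4]{batanin-berger}. Your reconstruction is sound in its main thrust: the $X$-generators are supported on the $P$-composition data at $X$-edges together with the unary $A$-action on $K$- and $L$-edges, while $F$- and $G$-generators recolor a single $K$-edge, and the only nontrivial interaction (a unary $A$-arrow hitting the very edge being recolored) is resolved by naturality of $f$ and $g$ as morphisms in $[A,\Set]$, which is exactly what you say. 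You also correctly identify the one genuinely new wrinkle relative to \cite{batanin-berger}: the presence of non-identity unary $A$-morphisms, which is absent when the color category is discrete.

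One correction, however. Your aside that ``(4) as stated is literally (3) specialized'' is not right. In part~(3) the \emph{given} composite $a\to b\to b'$ is an $X$-generator followed by an $F$/$G$-generator, and the square produced has the parallel path $a\to a'\to b'$ with the $F$/$G$-generator first; in part~(4) the \emph{given} composite is $F$ followed by $X$, and the parallel path has $X$ first. That is, (3) is the rewrite ``$(F/G)\circ X \ \rightsquigarrow\ X\circ (F/G)$'' and (4) is the opposite rewrite ``$X\circ F \ \rightsquigarrow\ F\circ X$.'' These are independent statements (one does not formally follow from the other), and both must be checked; in fact (4) together with the $F$-case of (3) is what lets one freely shuffle $F$- and $X$-generators past each other in a zig-zag, which is exactly how the lemma is invoked later in the proof of Theorem~\ref{thm:quasi-tame-admissible}. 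Since your disjoint-support/naturality argument does establish both directions, this misidentification does not invalidate your proof, but the parenthetical ``literally (3) specialized, included presumably for notational convenience'' should be removed, and (4) should be stated and checked as its own (symmetric but distinct) case.
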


We now proceed like in Section 7 of \cite{batanin-berger} and introduce other monads associated to $(P,A).$ 

Let ${\mathbb P}_{f}$ be the category whose objects are quadruples
$(X,K,L,f),$ where $X$ is a $P$-algebra, $K,L$  are objects in $[A,\Set]$ and $f:K\rightarrow L$ is a morphism in $[A,\Set].$

Let ${\mathbb P}_{g}$ be the category whose objects are quadruples $(X,K,L,g),$  where $X$ is a $P$-algebra, $K,L$  are objects in $[A,\Set]$ and $g:K\to \eta^*(X)$ is a morphism in $[A,\Set]$.

The obvious forgetful functors $\Uu_f:{\mathbb P}_f\to [A_0,\Set]\times [A_0,\Set] \times [A_0,\Set]$ and $\Uu_g:{\mathbb P}_g\to [A_0,\Set]\times [A_0,\Set]\times [A_0,\Set]$ are monadic, yielding monads $\Pp_f$ and $\Pp_g$ for which there are propositions analogous to Proposition \ref{Cart}. We leave the details to the reader.

We put $\Pp+2\Aa=(\Pp+\Aa)+\Aa$. This is also a polynomial monad on $[A_0,\Set]\times [A_0,\Set]\times [A_0,\Set]$ as are $\Pp_{f,g}$, $\Pp_f$ and $\Pp_g$.

There is a commutative square of forgetful functors over $[A_0,\Set]\times [A_0,\Set]\times [A_0,\Set]$ 
\begin{equation} 
\xygraph{{\xybox{\xygraph{!{0;(3,0):(0,.5)::} {\mathbb P_{f,g}}="p0" [r] {\mathbb P_f}="p1" [d] {\Alg_P \times [A,\Set] \times [A,\Set]}="p2" [l] {\mathbb P_g}="p3" "p0":"p1"^-{}:"p2"^-{}:@{<-}"p3"^-{}:@{<-}"p0"^-{}}}}
 \end{equation} 
 and all four forgetful functors have left adjoints so that we get a commutative square
of monad morphisms going in the opposite direction and augmented over $P_0$ via cartesian natural transformations: \begin{equation} 
{\xybox{\xygraph{!{0;(1.5,0):(0,.6667)::} {\Pp_{f,g}}="p0" [r] {\Pp_f}="p1" [d] {\Pp + 2\Aa}="p2" [l] {\Pp_g}="p3" "p0":@{<-}"p1"^-{}:@{<-}"p2"^-{}:"p3"^-{}:"p0"^-{}}}}\end{equation}
We thus obtain a commutative square of categorical $P$-algebra morphisms of the corresponding classifiers, which enables us to analyze the category structure of ${{\Pp}^{\Pp_{f,g}}}$.
\begin{equation}\label{classifiersquare} {\xybox{\xygraph{!{0;(1.5,0):(0,.6667)::} {{\Pp}^{\Pp_{f,g}}}="p0" [r] {{\Pp}^{\Pp_f}}="p1" [d] {{\Pp}^{\Pp + 2\Aa}}="p2" [l] {{\Pp}^{\Pp_g}}="p3" "p0":@{<-}"p1"^-{}:@{<-}"p2"^-{}:"p3"^-{}:"p0"^-{}}}}}
\end{equation}
Finally, we have a morphism of monads over $\Pp:$
$$\Pp+{\Aa} \to \Pp+2{\Aa}$$
which is the identity on $\Pp$ and sends $\Aa$ to the second copy of $\Aa$ in $(\Pp+\Aa)+\Aa.$  Thus we have a morphism of classifiers:
$$\Pp^{\Pp+{\Aa}} \to \Pp^{\Pp+2{\Aa}}.$$

\begin{lem}\label{tfg}
\begin{enumerate}\item
The  classifiers ${{\Pp}^{\Pp_{f,g}}},{{\Pp}^{\Pp_f}},{{\Pp}^{\Pp_g}}, {{\Pp}^{\Pp + 2\Aa}} $ all have the same object-set, and the diagram of (\ref{classifiersquare}) identifies ${{\Pp}^{\Pp_f}},{{\Pp}^{\Pp_g}}$ with subcategories of ${{\Pp}^{\Pp_{f,g}}}$ which intersect in ${{\Pp}^{\Pp + 2\Aa}} $ and which generate ${{\Pp}^{\Pp_{f,g}}}$ as a category.
\item The composite functor $$\Pp^{\Pp+{\Aa}} \to \Pp^{\Pp+2{\Aa}} \to  \Pp^{\Pp_f}     $$ 
has a left adjoint $p$ (as functor between categories) such that the counit of the adjunction is the identity. Thus $\Pp^{\Pp+{\Aa}} $ is a reflective subcategory of $\Pp^{\Pp_f}.$
\item The composite functor $$\Pp^{\Pp+{\Aa}} \to \Pp^{\Pp+2{\Aa}} \to  \Pp^{\Pp_g}     $$ 
has a left adjoint $r$ (again as a functor) such that the counit of the adjunction is the identity. Thus $\Pp^{\Pp+{\Aa}} $ is a reflective subcategory of $\Pp^{\Pp_g}.$
\end{enumerate}
\end{lem}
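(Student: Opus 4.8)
The plan is to prove the three items in order, with item (1) being essentially a structural observation about generators, and items (2) and (3) being the more substantial content. For item (1), I would argue that the common object-set is immediate from the definition: objects of all four classifiers are corollas decorated by $b \in B = \Pp(1)$ with incoming edges three-colored by $X, K, L$, and this description does not depend on which of the monads $\Pp_{f,g}$, $\Pp_f$, $\Pp_g$, $\Pp + 2\Aa$ we use — only the morphisms differ. Then, recalling the generator-and-relation description following \eqref{tricolor}, $\Pp^{\Pp_f}$ is the subcategory generated by $X$-generators and $F$-generators, $\Pp^{\Pp_g}$ is generated by $X$-generators and $G$-generators, and $\Pp^{\Pp+2\Aa}$ is generated by $X$-generators alone. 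Their intersection is therefore the subcategory on $X$-generators, namely $\Pp^{\Pp+2\Aa}$, and together they generate all of $\Pp^{\Pp_{f,g}}$ since every generator of the latter is an $X$-, $F$-, or $G$-generator. This mirrors Lemma 7.8 (or the corresponding statement) in \cite{batanin-berger} and the proof transports verbatim.

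For item (2), the key point is to identify the image of the composite $\Pp^{\Pp+\Aa} \to \Pp^{\Pp+2\Aa} \to \Pp^{\Pp_f}$ and construct the left adjoint $p$ explicitly. The functor $\Pp^{\Pp+\Aa} \to \Pp^{\Pp+2\Aa}$ realizes $\Pp^{\Pp+\Aa}$ as the full subcategory of $\Pp^{\Pp+2\Aa}$ on corollas with only $X$- and $L$-edges (the analogue of the remark in Section \ref{freealgext} that $T^{T+1}$ is the full subcategory of $T^{T+2}$ on $X$- and $L$-edges), and composing with the inclusion into $\Pp^{\Pp_f}$ keeps this description. The left adjoint $p$ should send a tricolored corolla to the corolla obtained by \emph{recoloring every $K$-edge as an $L$-edge}; on morphisms, use Lemma \ref{dist}(1)–(2) (the distributivity of $F$-generators past $X$-generators) to show that this assignment is functorial — an $F$-generator maps to an identity, an $X$-generator maps to an $X$-generator, and the squares of Lemma \ref{dist} guarantee well-definedness on composites. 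One then checks that the unit $a \to p(a)$ is given by applying the $F$-generators that turn all $K$-edges into $L$-edges, that this is natural, and that for $a$ already in $\Pp^{\Pp+\Aa}$ (no $K$-edges) the unit is the identity — which is exactly the assertion that the counit of the adjunction is the identity, i.e. that $\Pp^{\Pp+\Aa}$ is a \emph{reflective} subcategory. The adjunction identity $\Hom_{\Pp^{\Pp+\Aa}}(p(a), c) \cong \Hom_{\Pp^{\Pp_f}}(a, c)$ then follows because any morphism out of $a$ into a $K$-free object must factor through the maximal $F$-reduction of $a$, by the commutation relations of Lemma \ref{dist}.

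Item (3) is formally dual to item (2), with $F$-generators replaced by $G$-generators and $L$-edges replaced by $X$-edges: the left adjoint $r$ recolors every $K$-edge as an $X$-edge, using the $G$-generator distributivity clauses of Lemma \ref{dist}(1),(3), and the same argument shows $\Pp^{\Pp+\Aa}$ is a reflective subcategory of $\Pp^{\Pp_g}$ with identity counit. I would write item (3) by simply noting the proof is obtained from that of item (2) mutatis mutandis.

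The main obstacle I anticipate is item (2): specifically, verifying that the recoloring assignment $a \mapsto p(a)$ extends to a well-defined functor on all of $\Pp^{\Pp_f}$ (not just on generators) and that the claimed unit is natural. This requires a careful bookkeeping argument using Lemma \ref{dist} to push all $F$-generators to one side of any morphism, and to confirm that different ways of doing so yield the same result in $\Pp^{\Pp+\Aa}$; this is the kind of coherence check that is conceptually routine but where the relations \eqref{relation1}–\eqref{relation 2} must be invoked precisely. Everything else — the object-set claim, the subcategory-generation claim, and the dualization in item (3) — is bookkeeping that transfers directly from \cite[Section 7]{batanin-berger}.
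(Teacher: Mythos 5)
Your proposal matches the paper's argument: item (1) is the same structural observation (and the paper likewise defers to the analogous result in \cite{batanin-berger}), and your constructions of $p$ and $r$ — recoloring $K$-edges as $L$-edges (resp.\ $X$-edges) with units given by iterated $F$-generators (resp.\ $G$-generators) — are exactly the reflections the paper describes. The coherence checks you flag as the main obstacle (well-definedness of $p$ on morphisms via the commutation relations of Lemma~\ref{dist}) are real but are left implicit in the paper as well, so your treatment is, if anything, slightly more explicit than the source.
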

\begin{proof} The proof of the first point of the lemma is completely analogous to the proof of \cite[Lemma 7.3]{batanin-berger}. 

 The reflection $p$ on an object from $\Pp^{\Pp_f}$  replaces all $K$-edges by $L$-edges and the unit is obtained by iterated application of $F$-generators. 

 The reflection $r:\Pp^{\Pp_g}\to \Pp^{\Pp+\Aa}$   takes an object of $\Pp^{\Pp_g}$ and replaces all $K$-edges by $X$-edges. 
The unit of this adjunction is generated by applying $G$-generators to all $K$-edges in the object. 
\end{proof}

\subsection{Canonical filtration}\label{canfilt}

In this section, we provide the filtration that will be at the heart of most of our transfer proofs. This section generalizes \cite{batanin-berger}, which only provided the filtration for tame polynomial monads.

To shorten the notation, we put $\ts = \h.$ We say that an object $a$ of $\ts$ is of type $(p,q)$ if $a$ contains exactly $p$ $K$-edges and $q$ $L$-edges, and we call $p+q$ the \emph{degree} of $a$. Let $k\ge 1.$ 

\begin{notation} \label{notation}
We define: 
\begin{itemize}
\item $\ts^{(k)}$ to be the full subcategory of $\ts$ spanned by all objects of degree $\leq k;$
\item $\LX^{(k)}$ to be a full subcategory of $\cop \subset \ho\subset \ts$  spanned by all objects of degree $\leq k;$
\item  $\wa^{(k)}$ to be the full subcategory of $\ts$ spanned by all objects of degree exactly $k$;
\item $\qa^{(k)}$ to be the full subcategory of $\wa^{(k)}$ spanned by all objects of type $(p,k-p)$ such that $p\ne 0;$ 
\item  $\la^{(k)}$ to be the full subcategory of $\wa^{(k)}$ spanned by all objects of type $(0,k);$ 
\item  $\overline{\qa}^{(k)}$ to be the full subcategory of $\ts^{(k)}$ spanned by the objects not contained in $\la^{(k)}$.
\end{itemize}
\end{notation}

Observe that $\wa^{(k)}$ and $\qa^{(k)}$ are also full subcategories of $ \Pp^{\Pp_f}.$ 

\begin{example}
In the case when $(P,A)$ encodes a Grothendieck construction $\int \Phi$, the objects of $\overline{\qa}^{(k)}$ are boxed planar trees where all leaves are boxed, and labeled by $X$, $K$, or $L$. Each connected component consists a fixed string of $X$, $L$, and $K$ obtained by moving around the tree clockwise and recording the order of vertices encountered, with no consecutive $X$'s, because all possible reduction rules have been applied.
\end{example}

\begin{proposition}\label{filtration}\label{SS}
For any functor $\mathbf{X}:\h \rightarrow \M$, the colimit of $\mathbf{X}$ is a sequential colimit of pushouts in $\M$.

More precisely, for $S_k = \colim_{\ts^{(k)}} \mathbf{X}|_{\ts^{(k)}}$, we get $$S=\colim_{\ts} \mathbf{X}\cong \colim_k S_k,$$ where the canonical morphism $S_{k-1}\to S_k$ is part of the following pushout square in $\M$
\begin{equation}\label{KLX}
\xygraph{!{0;(1.5,0):(0,.6667)::}
{Q_k}="p0" [r] {L_k}="p1" [d] {S_k}="p2" [l] {S_{k-1}}="p3"
"p0":"p1"^-{w_k}:"p2"^-{}:@{<-}"p3"^-{}:@{<-}"p0"^-{\alpha_k}
"p2" [u(.4)l(.3)] (-[d(.2)],-[r(.15)])}
\end{equation}
in which $Q_k$ (resp. $L_k$) is the colimit of the restriction of $\mathbf{X}$ to $\qa^{(k)}$ (resp. $\la^{(k)}$).
\end{proposition}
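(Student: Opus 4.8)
The plan is to read both assertions off the combinatorial structure of $\ts=\h$ alone, in two stages: first reduce $\colim_{\ts}\mathbf{X}$ to the sequential colimit of the $S_k$, and then identify each transition $S_{k-1}\to S_k$ with the asserted pushout. For the first stage, since $p$ has finite fibres every object of $\ts$ carries only finitely many $K$- and $L$-edges and hence has finite degree, so the full subcategories $\ts^{(k)}$ exhaust $\ts$ along $\ts^{(0)}\subset\ts^{(1)}\subset\cdots$; a cocone under $\mathbf{X}$ is exactly a compatible family of cocones under the $\mathbf{X}|_{\ts^{(k)}}$, so $\colim_{\ts}\mathbf{X}\cong\colim_k S_k$ and it remains to analyse $S_{k-1}\to S_k$.

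For the second stage I would isolate three facts about $\ts$, all of which follow from the generators-and-relations description recalled above together with Lemmas \ref{dist} and \ref{tfg}. On objects, $\ts^{(k)}$ is the disjoint union of $\overline{\qa}^{(k)}$ (the objects of degree $\le k-1$, together with the degree-$k$ objects carrying a $K$-edge) and $\la^{(k)}$, while $\wa^{(k)}=\qa^{(k)}\sqcup\la^{(k)}$ is the part of degree exactly $k$. (a)~Degree is non-increasing along morphisms and decreases by exactly one along each $G$-generator; consequently $\la^{(k)}$ is closed under outgoing morphisms (its objects admit only $X$-generators, which preserve both the degree and the absence of $K$-edges), and a degree-$k$ object maps into $\ts^{(k-1)}$ only through a $G$-generator. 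So the only morphisms of $\ts^{(k)}$ relating the two pieces run from $\qa^{(k)}$ into $\la^{(k)}$, which exhibits $\ts^{(k)}$ as the category obtained by gluing $\overline{\qa}^{(k)}$ and $\la^{(k)}$ along the profunctor $H(a,b)=\hom_{\ts^{(k)}}(a,b)$. (b)~By the reflection $p$ of Lemma \ref{tfg}(2), which replaces every $K$-edge by an $L$-edge with unit built from $F$-generators, $H$ vanishes away from $\qa^{(k)}$ and is corepresented there by $p\colon\qa^{(k)}\to\la^{(k)}$: each cross-morphism factors uniquely as $a\to p(a)\to b$. (c)~$\ts^{(k-1)}$ is a final subcategory of $\overline{\qa}^{(k)}$.

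Granting (a)--(c), the rest is formal. By (a), a cocone under $\mathbf{X}|_{\ts^{(k)}}$ is a cocone under $\mathbf{X}|_{\overline{\qa}^{(k)}}$ together with a cocone under $\mathbf{X}|_{\la^{(k)}}$ agreeing along the cross-morphisms of $\qa^{(k)}\to\la^{(k)}$, and by (b) it is enough to impose this agreement for the reflection units $a\to p(a)$. Hence $S_k=\colim_{\ts^{(k)}}\mathbf{X}$ is the pushout of $\;\colim_{\overline{\qa}^{(k)}}\mathbf{X}\leftarrow Q_k\xrightarrow{\,w_k\,}L_k\;$, where $Q_k=\colim_{\qa^{(k)}}\mathbf{X}$, the left leg is induced by the inclusion $\qa^{(k)}\hookrightarrow\overline{\qa}^{(k)}$ and $w_k$ by $p$ together with the units. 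Finally, by (c), $\colim_{\overline{\qa}^{(k)}}\mathbf{X}=S_{k-1}$ and the left leg is $\alpha_k$, which is exactly the pushout square (\ref{KLX}).

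I expect the only non-formal step to be (c). Non-emptiness of $a/\ts^{(k-1)}$ for $a\in\qa^{(k)}$ is immediate (apply a $G$-generator to any $K$-edge), and connectedness will be the crux: using the commutation relations of Lemma \ref{dist} one rewrites any degree-lowering morphism out of $a$ so that it begins with a $G$-generator, and then reconciles two choices of first $G$-generator, at distinct $K$-edges $e$ and $e'$, via the square obtained by performing the $G$-generators at $e$ and $e'$ in the two orders. This is the one place where the filtration argument of \cite{batanin-berger} --- stated there only for tame polynomial monads --- must be re-inspected; since the relations of Lemma \ref{dist} are available for every $\Sigma$-free substitude, the argument carries over.
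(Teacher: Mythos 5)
Your proof is correct, and the underlying combinatorial input is the same as the paper's (finality of $\ts^{(k-1)}$ in $\overline{\qa}^{(k)}$, the reflection $p:\wa^{(k)}\to\la^{(k)}$ of Lemma \ref{tfg}, and the fact that morphisms never increase degree), but the categorical packaging is genuinely different. The paper exhibits $\ts^{(k)}$ as the \emph{union} of the two full subcategories $\overline{\qa}^{(k)}$ and $\wa^{(k)}$ along their common intersection $\qa^{(k)}$, checks that there are no morphisms in $\ts^{(k)}$ between $\overline{\qa}^{(k)}\smallsetminus\qa^{(k)}$ and $\wa^{(k)}\smallsetminus\qa^{(k)}=\la^{(k)}$, and then invokes Lemma 7.13 of \cite{batanin-berger} to conclude that colimits over these four categories form a pushout; it then identifies $\colim_{\overline{\qa}^{(k)}}\mathbf{X}\cong S_{k-1}$ via finality and $\colim_{\wa^{(k)}}\mathbf{X}\cong L_k$ via the reflection. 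You instead \emph{partition} $\Ob\,\ts^{(k)}=\Ob\,\overline{\qa}^{(k)}\sqcup\Ob\,\la^{(k)}$, observe that the only cross-morphisms run $\qa^{(k)}\to\la^{(k)}$ and are corepresented by $p$, and derive the pushout decomposition directly from the universal property of colimits. Your route avoids citing the external lemma (at the cost of re-proving a special case of it via the profunctor argument), and it only needs the single identification $\colim_{\overline{\qa}^{(k)}}\mathbf{X}\cong S_{k-1}$, whereas the paper's route also identifies $\colim_{\wa^{(k)}}\mathbf{X}\cong L_k$; in your version the reflection is used to corepresent $H$ rather than as a finality statement. Both uses of $p$ are formally equivalent (a right-adjoint inclusion is final precisely because each comma category has an initial object, and that initial object is the unit $a\to p(a)$ you use to factor cross-morphisms), so the two arguments are interchangeable in content. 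Your stage (c) --- finality of $\ts^{(k-1)}\hookrightarrow\overline{\qa}^{(k)}$ --- is indeed where the real work lies; the paper's proof of connectedness there is a bit more involved than your sketch (it treats the two cases $g_1=g_2$ and $g_1\ne g_2$ separately and draws explicit connecting zigzags in the comma category), but your outline using Lemma \ref{dist} to move $G$-generators to the front and then reconciling two choices of first $G$-generator is the right idea and does not rely on tameness.
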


\begin{proof} 
First, observe that $\ts^{(k-1)}$ is a final subcategory of $\overline{\qa}^{(k)}.$   Let $I:\ts^{(k-1)}\subset \overline{\qa}^{(k)}$ be the  canonical inclusion and let $b\in  \overline{\qa}^{(k)}.$  

Observe  that the objects of $\overline{\qa}^{(k)}$ which are not in the image of this inclusion must have the type $(p,q) , p+q =k, p\ne 0$ that is they have at least one $K$-edge. So application of a $G$-generator (that is replacing a $K$-edge by an $X$-edge) to $b$ gives a morphism to an object in $\ts^{(k-1)}.$ So the comma-category $b/I$ is not empty.

Let now $a\stackrel{\phi}{\ot} b\stackrel{\psi}{\to} c$ be a span in $\overline{\qa}^{(k)}$    with $a,c$ being two objects of  $\ts^{(k-1)}$ and assume that $b$ is not in  $\ts^{(k-1)}.$ Then using $\phi$ and $\psi$ both have representations as composites of generators in which there is at least one $G$-generator.
Using Lemma  \ref{tfg}
one can assume that such a representation contains this generator on the first place so that the original span is the following composite 
$$a\stackrel{\phi'}{\ot}a'\stackrel{g_1}{\ot}  b\stackrel{g_2}{\to}c'\stackrel{\psi'}{\to} c$$
where $a',c' \in \ts^{(k-1)}.$ Now if $g_1 = g_2 =g$ (and hence $a'=c'$) the following diagram connects the objects $\phi$ and $\psi$ in $b/I:$ 

\begin{align}
\xymatrix{
&  & b \ar[ld]_g\ar[rd]^g \ar[dd]^g & &\\
&a'\ar[ld]_{\phi'} && c'\ar[rd]^{\psi'} & \\
a &  & a'  \ar[ll]_{\phi'} \ar[rr]^{\psi'} & & c
}
\end{align}

If $g_1\ne g_2$ it means that the generators $g_1,g_2$ replace different $K$-edges by $X$-edges. So, one can extend the span   $a'\stackrel{g_1}{\ot}  b\stackrel{g_2}{\to}c'$ to the commutative square by means of a cospan  $a'\stackrel{g'_2}{\to}  b'\stackrel{g'_1}{\ot}c'.$ All these data can be placed on the commutative diagram
\begin{align}
\xymatrix{
&  & b \ar[ld]_{g_1}\ar[rd]^{g_2} \ar[dd] & &\\
&a'\ar[d]_{id} \ar[ld]_{\phi'}\ar[rd]^{g'_2} && c'\ar[rd]^{\psi'}\ar[ld]_{g'_1}\ar[d]_{id} & \\
a &  \ar[l]_{\phi'}a' \ar[r]^{g'_2} & b'  &  c' \ar[r]^{\psi'} \ar[l]_{g'_1}     & c
}
\end{align}
which again connect objects $\phi$ and $\psi.$
 
The category $\la^{(k)}$ is a reflective subcategory of $\wa^{(k)}$ by the restriction of the reflection $p$ from Lemma  \ref{tfg} (that is replacing all $K$-edges to $L$-edges).
     
Consider the following diagram of categories where the central square commutes: 
\begin{equation}\label{tamesquare} 
 \xygraph{!{0;(1.5,0):(0,.6667)::}
{{\qa}^{(k)}}="p0" [r] {\wa^{(k)}}="p1" [d] {\ts^{(k)}}="p2" [l] {\overline{\qa}^{(k)}}="p3"
"p0":"p1"^-{}:"p2"^-{}:@{<-}"p3"^-{}:@{<-}"p0"^-{}
"p3" :@{<-}[l] {\ts^{(k-1)}} "p1" :@{<-}[r] {\la^{(k)}}
} \end{equation}
Restricting $X$ to this subcategory and taking colimits we obtain a commutative diagram like (\ref{KLX}). We only need to know that this is a pushout diagram in $\M.$

A closer inspection of the central square (\ref{tamesquare}) reveals that it is a categorical pushout of a special kind: the category $\ts^{(k)}$ is obtained as the set-theoretical union of the categories $\overline{\qa}^{(k)}$ and $\wa^{(k)}$ along their common intersection $\qa^{(k)}$. Indeed, away from this intersection, there are no morphisms in $\ts^{(k)}$ between objects of $\overline{\qa}^{(k)}$ and objects of $\wa^{(k)}$. By Lemma 7.13 from \cite{batanin-berger}, this implies that (\ref{KLX}) is a pushout square in $\M$.

\end{proof}

\begin{example}
In the case where $(P,A)$ encodes monoids, the filtration of Proposition \ref{filtration} is especially nice, because $\mathbf{p}$ has a discrete final subcategory $\mathbf{t}$, described in Section \ref{subsec:monoids-modules}, and we can define the subcategories of Notation \ref{notation} relative to this $\mathbf{t}$. Set $Q_k = \colim_{\qa^{(k)}} \mathbf{X}$. Let $\qa^{(k)}(0)$ denote the subcategory of $\qa^{(k)}$ spanned by the objects with no $X$'s. Let $Q_k(0) = \colim_{\qa}^{(k)}(0)$. It is easy to see that $Q_k(0)$ is the domain of the iterated pushout product $f^{\boxprod k}$, i.e., is the colimit of the $n$-dimensional punctured cube with vertices words of length $n$ formed from the letters $K$ and $L$. For the intersection of $\qa^{(k)}$ with $\mathbf{t}$, there are also $X$'s between the $K$'s and $L$'s, forming an alternating string, like in Section \ref{subsec:monoids-modules}. The morphism $w_k$ in Proposition \ref{filtration} is precisely the morphism $X^{\otimes (k+1)}\otimes f^{\boxprod k}$ from \cite{SS00}.
\end{example}

\subsection{Model structure for internal algebras}

In this section, we use the filtration of Proposition \ref{filtration} to prove the existence of model structures on algebras over well-behaved substitudes. We make use of the model category terminology of Section \ref{subsec:transfer}

Let $(P,A)$ be a $\Sigma$-free substitude. Let $\Ee$ be a cocomplete categorical pseudoalgebra of $P.$ 
Then the monad morphism  $\eta:A\to P$ induces an adjunction between internal algebras of $A$ in $\Ee$ and 
$P$ in $\Ee.$ We call the category of internal $A$-algebras in $\Ee$ the category of internal $A$-collections $Coll_A(\Ee)$ and the category of internal $P$-algebras in $\Ee$ simply the category of $P$-algebras $\Alg_P(\Ee).$ 

\begin{remark} To clarify the situation, an $A$-collection $X$ is an internal $A$-presheaf in the categorical $A$-presheaves $\eta^*(\Ee).$ That is for each $a\in A,$ $X(a)\in \Ee(a)$ and for each $f:a\to b$ in $A$ we have a morphism $\Ee(f)(a)\to b$  in $\Ee(b)$ (thinking about $f$ as a unary  operation of $P$).  These morphisms satisfy an obvious coherence relation. For example, if $\Ee=\M^\bullet$ is a constant $P$-algebra, then an internal $A$-presheaf in $\Ee$ is just a presheaf in $\M.$ The main applications of our results concerns this classical constant case so the reader may keep in mind that $Coll_A(\Ee)$ is simply the category of presheaves. The argument in the general situation does not differ much. 
 \end{remark}

The unit $\eta$ of the substitude induces an adjunction between the category of  internal $A$-collections  and internal $P$-algebras  in $\Ee$ and this adjunction is monadic (cf. \cite{Reedy-paper} ). Assume now that  that  $Coll_A(\Ee)$ is equipped with a model structure. We can ask: when can this model structure can be transferred to the category of internal $P$-algebras?

Now let $X=(X,K,L,g,f) $ be an internal algebra of $\Pp_{f,g}$ in $\Ee$ and let $\widetilde{\XX}:\h \rightarrow Alg_P(\Ee)$ be a strict functor which represents $X.$ 
By restricting along $\eta$ we obtain a composite functor:
$$\XX:\h \rightarrow Alg_P(\Ee)\stackrel{\eta^*}{\to} Coll_A(\Ee).$$
On an object 
  $b\in \h$ (that is, a decorated corolla)
 \begin{equation}\label{XXX} \XX(b) = \bigotimes_{i\in \text{sources of~} X\text{-edges} }X_i \bigotimes_{j\in \text{sources of~} K\text{-edges}} K_j \bigotimes_{k\in \text{sources of~} L\text{-edges}} L_k  \end{equation}
 For example $\XX(b) = X\otimes X\otimes X \otimes K \otimes L$ on the corolla below
\begin{equation}\label{tricolor} \xygraph{!{0;(.7,0):(0,1)::} {\scriptstyle{b}} *\xycircle<6pt>{-} (-[l(2)u] {\scriptstyle{K}},-[ul] {\scriptstyle{X}},-[u] {\scriptstyle{X}},-[ru] {\scriptstyle{X}},-[r(2)u] {\scriptstyle{L}},-[d(.9)])} \end{equation}
In this example the monad has only one color. 

We are now in the situation of Proposition \ref{filtration} for $\M=Coll_A(\Ee).$
 
Let $\psi:T\to S$ be a morphism of  polynomial monads. Let $\Ee$ be a cocomplete categorical pseudoalgebra of $S.$ 
Then the monad morphism  $Id\to T$ induces an adjunction between internal algebras of $Id$ in $\Ee$ and 
$T$ in $\Ee.$ We call the category of internal $Id$-algebras in $\Ee$ the category of internal $\Ee$-collections $Coll(\Ee)$ and the category of internal $T$-algebras in $\Ee$ simply the category of $T$-algebras $\Alg_T(\Ee).$  
 
We suppose that the adjunction between the category of  internal collections  and internal $T$-algebras  in $\Ee$ is monadic and that there is a model structure on the category of $Coll(\Ee).$

\begin{proposition}\label{Ktheorem}  
Let $\Kk$  be a saturated class in $Coll_A(\Ee)$ such that $Coll_A(\Ee)$ is $\Kk$-compactly generated model category. And let $\Ll \subset \cat{W}\cap \Kk$ as usual. 
Then  the monad on $Coll_A(\Ee)$ induced by the adjunction between the category of  internal collections  and internal $T$-algebras  in $\Ee$  is
\begin{itemize}
\item[(a)] $(\Kk,\Ll)$-admissible if for any cofibration $u:K\to L$ in $Coll_A(\Ee)$ the morphisms
$$w_k(u): Q_k\to L_k$$
belong to  $\Kk$ and if $u$ is a trivial cofibration these morphisms belongs to $\Ll;$  
\item[(b)] relatively $(\Kk,\Ll)$-adequate if in addition $Coll_A(\Ee)$ is left proper, for any cofibration $u$ with cofibrant domain the morphism $w_k(u)$  is an $h$-cofibration and for any weak equivalence $\phi:X\to Y$ between relatively cofibrant $T$-algebras the 
morphisms $Q_k(\phi):Q_k(X)\to Q_k(Y)$ and $L_k(\phi): L_k(X)\to L_k(Y)$ are weak equivalences;
\item[(c)] $(\Kk,\Ll)$-adequate if  $Coll_A(\Ee)$ is left proper, if for any cofibration $u$ the morphism $w_k(u)$  is an $h$-cofibration, and if for any weak equivalence between internal $T$-algebras $\phi:X\to Y$ the morphisms $Q_k(\phi)$ and $L_k(\phi)$ are weak equivalences.
\end{itemize}

\end{proposition}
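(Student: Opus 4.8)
The plan is to feed all three assertions into the transfer machinery of Section~\ref{subsec:transfer}: by Theorem~\ref{thm:admissible-implies-(semi)-models} and Theorem~\ref{adequate} it suffices to prove that the monad in question is $(\Kk,\Ll)$-admissible in case~(a), relatively $(\Kk,\Ll)$-adequate in case~(b), and $(\Kk,\Ll)$-adequate in case~(c). The common input is Proposition~\ref{filtration}. Fix a (trivial) cofibration $u\colon K\to L$ in $Coll_A(\Ee)$ together with a structure morphism $\alpha$ presenting a free-algebra extension $u_\alpha\colon R\to R[u,\alpha]$. The classifier machinery of Sections~\ref{subsec:classifiers} and~\ref{freealgext} identifies the underlying collection of $R[u,\alpha]$ with $\colim_{\ts}\XX$, where $\XX\colon\ts\to Coll_A(\Ee)$ is the restriction along $\eta^*$ of the strict functor $\widetilde{\XX}\colon\ts\to\Alg_P(\Ee)$ representing the datum $(R,K,L,g,f)$, with $f=u$. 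By Proposition~\ref{filtration} the underlying morphism of $u_\alpha$ is then the transfinite composite $\eta^*(R)=S_0\to S_1\to\cdots\to\colim_k S_k=\eta^*(R[u,\alpha])$, in which each $S_{k-1}\to S_k$ occurs as the pushout~(\ref{KLX}) of $w_k(u)\colon Q_k\to L_k$ along $\alpha_k$. Since the colimit construction of Proposition~\ref{filtration} is functorial in $\XX$, and a morphism of $T$-algebras $\phi\colon R\to S$ induces a natural transformation $\XX_R\to\XX_S$ that is the identity on the $K$- and $L$-summands, the objects $S_k,Q_k,L_k$ are functorial in the algebra; in particular $S_k(\phi),Q_k(\phi),L_k(\phi)$ make sense and the factorization above is functorial in the sense demanded by the definition of adequacy.

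For~(a): by hypothesis every $w_k(u)$ lies in $\Kk$ when $u$ is a cofibration and in $\Ll$ when $u$ is a trivial cofibration, so each pushout $S_{k-1}\to S_k$ lies in $\Kk$ (resp.\ $\Ll$) because these classes are saturated, and hence so does the transfinite composite $\eta^*(u_\alpha)$. This is exactly the defining condition of $(\Kk,\Ll)$-admissibility.

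For~(b) and~(c): take the filtration $R[u]^{(k)}:=S_k$ as the functorial factorization demanded by the definition of (relative) adequacy. Its first required property — each $R[u]^{(k-1)}\to R[u]^{(k)}$ in $\Kk$ (resp.\ $\Ll$) for a (trivial) cofibration $u$ — is precisely what was shown in~(a). For the second property, fix a weak equivalence $\phi\colon X\to Y$ of $T$-algebras (between relatively cofibrant ones in case~(b), arbitrary in case~(c)) and a cofibration $u$ (with cofibrant domain in case~(b)); we prove $S_k(\phi)$ is a weak equivalence by induction on $k$. For $k=0$, $S_0(\phi)=\eta^*(\phi)$ is a weak equivalence since weak equivalences of $T$-algebras are created by the forgetful functor. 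For the inductive step, $S_k$ is the pushout $L_k\cup_{Q_k}S_{k-1}$ of $w_k(u)$; by hypothesis $Q_k(\phi)$ and $L_k(\phi)$ are weak equivalences, by induction $S_{k-1}(\phi)$ is a weak equivalence, and by hypothesis $w_k(u)$ is an $h$-cofibration. Since $Coll_A(\Ee)$ is left proper, the gluing lemma for $h$-cofibrations (cf.~\cite{batanin-berger}) yields that $S_k(\phi)$ is a weak equivalence. This completes the induction, so the monad is relatively $(\Kk,\Ll)$-adequate in case~(b) and $(\Kk,\Ll)$-adequate in case~(c); Theorems~\ref{thm:admissible-implies-(semi)-models} and~\ref{adequate} then produce the transferred, (relatively) left proper model structure.

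The bookkeeping in~(a) is routine; the crux is the inductive step of~(b)/(c). The hypothesis on $w_k(u)$ directly says only that a pushout along it preserves weak equivalences when the ``outer'' corner moves, whereas the induction needs the full three-corner gluing lemma (all of $Q_k$, $L_k$, $S_{k-1}$ varying). Deriving that statement is exactly where left properness of $Coll_A(\Ee)$ is consumed, and it is obtained by factoring the comparison $S_k(X)\to S_k(Y)$ through intermediate pushouts and invoking in turn the $h$-cofibration property, closure of $h$-cofibrations under cobase change, and left properness. A secondary technical point, flagged above, is verifying that the whole filtration really is functorial in the algebra, so that $S_k(\phi)$, $Q_k(\phi)$, $L_k(\phi)$ are legitimate; this follows from functoriality of the colimit in Proposition~\ref{filtration}.
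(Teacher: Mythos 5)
Your proof follows essentially the same strategy as the paper's: part (a) is handled by applying Proposition \ref{filtration} to write the underlying morphism of a free-algebra extension as a transfinite composite of pushouts of the $w_k(u)$ and then invoking saturation of $\Kk$ and $\Ll$; parts (b) and (c) are handled by the same induction on $k$ via the commutative cube formed by the two pushout squares defining $S_k(X)$ and $S_k(Y)$, using left properness of $Coll_A(\Ee)$ together with the $h$-cofibration hypothesis on $w_k(u)$ to recognize these as homotopy pushouts, so that weak equivalences on three corners propagate to the fourth. Your observation about the distinction between the raw $h$-cofibration property and the full gluing statement, and the remark about functoriality of the filtration in the algebra, are valid technical points that the paper glosses over, but the substance of the argument matches.
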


\begin{proof}

To simplify the notation, we will denote $Coll_A(\Ee)$ simply $Coll$ and the category of internal $P$-algebras in $\Ee$ simply $\Alg_P.$

To prove (a) we need to check that  
for each   (trivial) cofibration $u:K\to L$ and each morphism of $P$-algebras $\alpha:F_T(K)\to X$, the pushout in $\Alg_P$
\begin{equation}\label{extension} 
\xygraph{!{0;(2,0):(0,.5)::} 
{F_T(K)}="p0" [r] {X}="p1" [d] {X[u,\alpha]}="p2" [l] {F_T(L)}="p3" 
"p0":"p1"^-{\alpha}:"p2"^-{u_{\alpha}}:@{<-}"p3"^-{}:@{<-}"p0"^-{F_T(u)} 
"p2" [u(.4)l(.3)] (-[d(.2)],-[r(.15)]) 
}
\end{equation}
yields a $P$-algebra morphism $u_{\alpha}: X\to X[u,\alpha]$ whose underlying morphism $U_P(u_{\alpha})$ belongs  to $\Kk$ (resp. to $\Ll$).

According to Proposition \ref{SS} such a pushout can be computed as a sequential colimit, where each morphism is a certain pushout of $w_k(u).$  Since $\Kk$ and $\Ll$ are  saturated classes, the conclusion follows. 

For (b) observe that if $u:K\to L$ is a cofibration with cofibrant domain and $f:X\to Y$ is  a weak equivalence  between relatively cofibrant algebras we have the following commutative cube for each $k\ge 1$ 
\begin{equation}\label{cube2}
\xygraph{
*!(0,-2)=(0,3.5)
{\xybox{\xygraph{!{0;(2.5,0):(0,.75)::}
{Q_k(X)}="f0" [r] {P_{k-1}(X)}="f1" [d] {P_k(X)}="f2" [l] {L_k(X)}="f3"
"f0":"f1"^-{}:"f2"^-{}:@{<-}"f3"^-{}:@{<-}"f0"^-{}
"f0" [u(.5)r(.5)] {Q_k(Y)}="b0" [r] {P_{k-1}(Y)}="b1" [d] {P_k(Y)}="b2" [l] {L_k(Y)}="b3"
"b0":"b1"^-{}:"b2"^-{}:@{<-}"b3"|(.5)*=<5pt>{}:@{<-}"b0"|(.5)*=<5pt>{}
"f0":"b0" "f1":"b1" "f2":"b2" "f3":"b3"}}}}
\end{equation}
Now, the morphism $U(\phi) = \phi_0: U(X)=P_0(X)\to U(Y) = P_0(Y)$  is a weak equivalence. Let us prove by induction that  any $\phi_k$ is a weak equivalence, so the adequateness of the monad will follow from the fact that the category of collections is $\Kk$-compactly generated. 

Indeed, in the cube (\ref{cube2}) the arrow $P_{k-1}(X)\to P_{k-1}(Y)$ is a weak equivalence by the inductive assumption, and $Q_k(X)\to Q_k(Y), L_k(X)\to L_k(Y)$ are weak equivalences. It remains to show that both the front and back squares are homotopy pushouts. This follows from the condition that $Coll_A(\Ee)$ is left proper and $w_k(u)$ are $h$-cofibration, hence, the pushouts in front and back of the cube are homotopy pushouts.

The proof for statement (c) is similar to (b). 

\end{proof}

\begin{theorem}\label{Kcorol} 
Under the conditions of Proposition \ref{Ktheorem} 
\begin{itemize}
\item[(a)] The category of internal $P$-algebras in $\Ee$  admits a transferred model structure;
\item[(b)] This model structure is relatively left proper;
\item[(c)] This model structure is left proper.
\end{itemize}
\end{theorem}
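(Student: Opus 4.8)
The plan is to feed Proposition \ref{Ktheorem} into the abstract transfer machinery of Section \ref{subsec:transfer}, applied with the $\Kk$-compactly generated model category $Coll_A(\Ee)$ playing the role of the ambient category and with the monad $\bbT$ on $Coll_A(\Ee)$ whose algebras are the internal $P$-algebras in $\Ee$ playing the role of ``$T$''. The only preliminary observation needed is that $\bbT$ is finitary: since $(P,A)$ is a $\Sigma$-free substitude, the forgetful functor $\Alg_P(\Ee)\to Coll_A(\Ee)$ preserves filtered colimits, exactly as for polynomial monads in \cite{Reedy-paper}. Together with the standing hypothesis that $Coll_A(\Ee)$ is $\Kk$-compactly generated, this places us squarely within the hypotheses of Theorems \ref{admissible} and \ref{adequate}.

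For (a), Proposition \ref{Ktheorem}(a) asserts that $\bbT$ is $(\Kk,\Ll)$-admissible, so Theorem \ref{admissible} provides the transferred cofibrantly generated model structure on the category of internal $P$-algebras in $\Ee$. For (b), Proposition \ref{Ktheorem}(b) asserts that $\bbT$ is relatively $(\Kk,\Ll)$-adequate, so the relative case of Theorem \ref{adequate} applies: $\bbT$ is $(\Kk,\Ll)$-admissible and the free $\bbT$-algebra functor sends cofibrations with cofibrant domain to relative $h$-cofibrations; by Proposition 2.12 of \cite{batanin-berger} it then sends \emph{all} cofibrations to relative $h$-cofibrations, whence the transferred model structure is relatively left proper (this last implication is itself part of the conclusion of Theorem \ref{adequate}). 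For (c), Proposition \ref{Ktheorem}(c) asserts that $\bbT$ is $(\Kk,\Ll)$-adequate, and the non-relative case of Theorem \ref{adequate} gives at once that the free $\bbT$-algebra functor sends cofibrations to $h$-cofibrations and hence that the transferred model structure is left proper.

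The substantive content is entirely in Proposition \ref{Ktheorem}; what remains for this theorem is really just unwinding definitions and checking that the abstract results of Section \ref{subsec:transfer}, stated for an arbitrary $\Kk$-compactly generated model category and a finitary monad on it, apply verbatim with $\Ee$ replaced by $Coll_A(\Ee)$. The one place where a little care is warranted is confirming that free-algebra extensions of internal $P$-algebras genuinely admit the functorial filtration demanded by the definitions of $(\Kk,\Ll)$-admissibility and $(\Kk,\Ll)$-adequacy; but this is exactly what Proposition \ref{filtration}, applied with $\M=Coll_A(\Ee)$, and Proposition \ref{Ktheorem} together furnish, so no further argument is needed. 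I would also record the principal case of interest, $\Ee=\M^\bullet$ a constant $P$-algebra, in which $Coll_A(\Ee)$ is simply the category of $A$-presheaves in $\M$ and the statement reduces to the transfer of a model structure to $\Alg_P(\M)$.
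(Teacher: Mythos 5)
Your proof is correct and takes essentially the route the paper intends: the paper states Theorem \ref{Kcorol} with no explicit proof, treating it as an immediate consequence of feeding Proposition \ref{Ktheorem}(a)--(c) into Theorems \ref{admissible} and \ref{adequate}, which is exactly what you do. Your additional remarks about finitariness and about the functorial filtration being supplied by Proposition \ref{filtration} correctly flag the only hypotheses that need to be checked but are not explicitly restated in the theorem.
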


\subsection{Quasi-tame substitudes} 

In \cite{batanin-berger}, many examples of polynomial monads are given, and we have seen more in Section \ref{sec:Gr(T)}. However, tameness is somewhat rare in general. For instance, the monads for reduced operads and for non-symmetric operads are tame, but the monads for their Grothendieck constructions are not tame.  

Our key new contribution to handle such situations is the notion of a quasi-tame polynomial monad. Such monads are much more common than tame polynomial monads, and Theorem \ref{thm:quasi-tame-admissible} can be applied to them in order to produce (relatively) left proper transferred model structures. If $T$ is tame, then the pushouts $S_k$ in the sequential colimit from Proposition \ref{filtration} are particularly easy to handle. However, quasi-tameness is sufficient for the arguments of Theorem \ref{thm:tame-admissible}, as we now show.

In this section, we work in the even more general setting of quasi-tame substitudes, which are quasi-tame polynomial monads in case $A$ is a discrete category (i.e., a set). We do this in order to get applications to $n$-operads. We then prove that algebras over a quasi-tame substitude admit a transferred model structure, which is furthermore left proper if the ambient model category $\M$ is sufficiently nice. The following is a generalization of the definition of tame polynomial monads, as we will prove below.

\begin{defin} A $\Sigma$-free substitude $(P,A)$ is called \textit{quasi-tame} if the fundamental groupoid  $\Pi_1(\cop)$ is equivalent to a discrete groupoid.
\end{defin}

This definition is equivalent to the statement that, on each connected component of $\cop$, the fundamental groupoid is trivial. The key reason why quasi-tameness is enough for a transferred model structure is that the colimit over $\h$ can be split into two colimits -- one with information about the $K$ and $L$ vertices, and the other with information about the $X$ vertices. For quasi-tame polynomial monads, the larger colimit can be computed from the smaller colimits. The proof of this statement requires a finality argument, which we will explain below.

Now we assume that  $\Ee = \M^\bullet$ for a cocomplete symmetric monoidal $\M$ so  that $Coll_A(\Ee) = [A,\M].$ The category $[A,\M]$ is symmetric monoidal category with respect to the pointwise tensor product $\otimes_p.$ When we spoke of operads, we wrote $\M^\Sigmac$ instead of $[\Sigmac,\M]$ because this is the standard notation in operad theory. However, when we speak of substitutes (which, recall, are category-colored operads) we instead write $[A,\M]$ to emphasize the role of the category $A$.

We further  assume that $[A,\M]$  is equipped with  a cofibrantly generated model category structure with the class of weak equivalences $\WW,$ and set of generating cofibrations $\Ii$ and set of generating trivial cofibrations $\Jj.$  

The following definitions are not new and were introduced in \cite{batanin-berger}. Here we specialize it to the pointwise monoidal structure on $[A,\M]$ by adding the adjective pointwise everywhere. This is done just for convenience in order to remember which structure we have in mind.     

\begin{defin} A model structure on $[A,\M]$ is called pointwise compactly generated if it is $\Ii^{\otimes_p}$-compactly generated where  
$$\Ii^{\otimes_p} = Sat(\{\Ii\otimes_p x, \ x\in [A,\M] \}).$$ 
\end{defin} 

\begin{defin} A model structure on $[A,\M]$ is called  pointwise monoidal  if   the pushout-product axiom  is satisfied for the pointwise monoidal structure on  $[A,\M].$\end{defin} 

\begin{defin}  The model structure on $[A,\M]$ satisfies a pointwise monoid axiom if the following containment holds, where $Sat$ means saturation:  
$$\Jj^{\otimes_p} = Sat(\{\Jj\otimes_p x, \  x\in [A,\M] \})\subset \WW.$$ 
\end{defin}

\begin{defin} A model structure on   $[A,\M]$ will be called pointwise $h$-monoidal if for any cofibration $f:x\to y$ and any preasheaf $z$  the morphism $f\otimes_p z$ is an $h$-cofibration , which is trivial if $f$ is trivial. It will be called strongly pointwise $h$-monoidal if weak equivalences are closed under pointwise tensor product.  
\end{defin}

As in the case of $h$-monoidal categories any cofibration in a pointwise $h$-monoidal category is an $h$-cofibration so it is left proper \cite{batanin-berger}. In addition the following analogue of Proposition 2.5 from \cite{batanin-berger} holds.
\begin{proposition} In a pointwise compactly generated h-monoidal category the pointwise monoid axiom holds. 
\end{proposition}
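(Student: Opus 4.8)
The statement to prove is the pointwise analogue of Proposition 2.5 in \cite{batanin-berger}: in a pointwise compactly generated $h$-monoidal category $[A,\M]$, the pointwise monoid axiom holds, i.e.\ $\Jj^{\otimes_p} = Sat(\{\Jj\otimes_p x : x\in[A,\M]\})\subset\WW$. The plan is to follow the structure of the proof of Proposition 2.5 of \cite{batanin-berger} verbatim, since everything there is formulated in terms of an abstract monoidal structure, a class of generating trivial cofibrations, and the property that tensoring a trivial cofibration with any object yields a trivial $h$-cofibration. Here the monoidal structure is $\otimes_p$, the generating trivial cofibrations are $\Jj$, and pointwise $h$-monoidality supplies exactly the needed property.

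First I would reduce to generators: since $\Jj^{\otimes_p}$ is by definition the saturation of $\{j\otimes_p x : j\in\Jj,\ x\in[A,\M]\}$, and weak equivalences in a model category are closed under retracts, it suffices to show that every morphism in $(\{j\otimes_p x\})$-cell is a weak equivalence. By the pointwise $h$-monoidality hypothesis, each generator $j\otimes_p x$ (with $j$ a trivial cofibration) is a trivial $h$-cofibration; in particular it is a weak equivalence, and pushouts along it preserve weak equivalences. Next I would run the transfinite composition argument: given a $\lambda$-sequence $Z_0\to Z_1\to\cdots$ whose successive maps are pushouts of generators $j_\beta\otimes_p x_\beta$, each map $Z_\beta\to Z_{\beta+1}$ is a pushout of a trivial $h$-cofibration, hence itself a trivial $h$-cofibration (trivial $h$-cofibrations are closed under pushout, as pushing out again preserves weak equivalences and the map remains a weak equivalence). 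It then remains to see that a transfinite composite of trivial $h$-cofibrations is a weak equivalence. This is where pointwise compact generatedness enters: since the model structure is $\Ii^{\otimes_p}$-compactly generated and $\Jj^{\otimes_p}\subset\Ii^{\otimes_p}$-cell, all objects are small relative to such cells and, more importantly, the weak equivalences are closed under the relevant filtered colimits, so the transfinite composite of the $Z_\beta$'s is a weak equivalence. (Concretely one compares the sequence $(Z_\beta)$ with the constant sequence on $Z_0$ via the structure maps and invokes closure of $\WW$ under filtered colimits along $\Ii^{\otimes_p}$-cell morphisms, exactly as in \cite{batanin-berger}.)

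The main obstacle, such as it is, is purely bookkeeping: one must make sure that the colimit-closure property guaranteed by $\Ii^{\otimes_p}$-compact generatedness is being applied to a diagram built from $\Ii^{\otimes_p}$-cell maps (which it is, since $\Jj\subset\Ii$ forces $\Jj^{\otimes_p}\subset\Ii^{\otimes_p}$ and hence cell complexes of the former are cell complexes of the latter), and that ``trivial $h$-cofibration'' is genuinely closed under transfinite composition in the presence of this colimit-closure. Both points are handled identically to Proposition 2.5 of \cite{batanin-berger}; the only change is the systematic insertion of the adjective ``pointwise'' and the replacement of $\otimes$ by $\otimes_p$, $\Ii,\Jj$ by the generating (trivial) cofibrations of $[A,\M]$. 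Hence the proof reads: \emph{The proof is identical to that of Proposition 2.5 of \cite{batanin-berger}, replacing the monoidal structure by the pointwise one, using that a trivial cofibration tensored pointwise with any presheaf is a trivial $h$-cofibration (pointwise $h$-monoidality) and that $\WW$ is closed under filtered colimits along $\Ii^{\otimes_p}$-cell maps (pointwise compact generatedness).}
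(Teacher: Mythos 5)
Your proposal is correct and takes the same route the paper intends: the paper itself omits the proof, introducing the proposition with the remark that it is the ``analogue of Proposition 2.5 from \cite{batanin-berger},'' which is exactly the argument you reproduce with $\otimes$ systematically replaced by $\otimes_p$ and compactness/$h$-monoidality replaced by their pointwise versions. Your outline (reduce to cell complexes, observe each generator $j\otimes_p x$ is a trivial $h$-cofibration and hence so is each pushout of it, then use $\Ii^{\otimes_p}$-compact generatedness to pass to the transfinite composite) is the correct fleshing-out of what the paper leaves implicit.
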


\begin{theorem} \label{thm:quasi-tame-admissible}
Let $(P,A)$ be a quasi-tame $\Sigma$-free substitude.  
\begin{enumerate}
\item If $[A,\M]$ satisfies the pointwise monoid axiom then the category $Alg_P(\M)$ admits a transferred model structure along $\eta^*:Alg_P(\M)\to [A,\M].$

\item If $[A,\M]$ is pointwise $h$-monoidal then this model structure on $Alg_P(\M)$ is relatively left proper.  

\item If $[A,\M]$ is strongly pointwise $h$-monoidal then this model structure on   $Alg_P(\M)$ is left proper.

\end{enumerate} 

\end{theorem}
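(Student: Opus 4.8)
The plan is to deduce all three statements from Proposition~\ref{Ktheorem} and Theorem~\ref{Kcorol}, applied with $\Ee=\M^\bullet$ (so $Coll_A(\Ee)=[A,\M]$), with $\Kk=\Ii^{\otimes_p}$ and $\Ll=\Jj^{\otimes_p}$. Using that $[A,\M]$ is a pointwise compactly generated, pointwise monoidal model category, it is $\Kk$-compactly generated, and $\Ll\subset\WW\cap\Kk$ as soon as the pointwise monoid axiom holds (which it does in all three cases, being implied by pointwise $h$-monoidality in (2) and (3)). Thus everything reduces to controlling, for each $k\ge 1$ and each (trivial) cofibration $u\colon K\to L$ in $[A,\M]$, the morphism $w_k(u)\colon Q_k\to L_k$ produced by Proposition~\ref{filtration}, together with the induced maps $Q_k(\phi),L_k(\phi)$ for a weak equivalence $\phi$ between $P$-algebras.

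The heart of the argument is a structural description of $w_k(u)$ in terms of iterated pushout-products, and this is where quasi-tameness enters. I would factor the indexing category $\qa^{(k)}$ through the functor that remembers only the pattern of $K$- and $L$-edges of a decorated corolla, landing in a category $\mathbf{C}_k$ of degree-$k$ boundary patterns carrying at least one $K$ (with $F$-generators as morphisms), and regard $\la^{(k)}$ over the all-$L$ pattern. By the explicit presentation of the classifier recalled in Section~\ref{freealgext}, the fibre of $\qa^{(k)}\to\mathbf{C}_k$ over a pattern $\lambda$ is a full subcategory of $\cop$: inside such a fibre the only available generators are $X$-generators. Quasi-tameness says $\Pi_1(\cop)$, hence $\Pi_1$ of every such fibre, is discrete; combined with Lemmas~\ref{dist} and~\ref{tfg}, this is precisely what is needed to push the finality argument already used in the proof of Proposition~\ref{filtration} (where $\ts^{(k-1)}$ was shown to be final in $\overline{\qa}^{(k)}$) one step further, so as to locate inside each fibre a cofinal subcategory on which $\mathbf{X}$ carries only $X$-data. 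The fibrewise colimit of $\mathbf{X}$ is then $Z_\lambda\otimes_p(\text{the }K/L\text{-factor})$ for an object $Z_\lambda\in[A,\M]$ assembled from the components of the $P$-algebra $X$ by pointwise tensor and pushout; taking colimits over $\mathbf{C}_k$ produces the $k$-fold pushout-product, exactly as in the tame/monoid case (cf.\ the example after Proposition~\ref{filtration} and \cite{SS00}). Altogether one obtains, up to canonical isomorphism, a description of $w_k(u)$ as a finite coproduct of morphisms of the form $Z_\lambda\otimes_p u^{\boxprod k}$, with $u^{\boxprod k}$ the $k$-fold pushout-product map.

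Granting this description, the three parts follow by feeding it into Proposition~\ref{Ktheorem}, exactly as the tame case is handled in \cite{batanin-berger} (proof of Theorem~\ref{thm:tame-admissible}). For (1): it suffices to treat $u$ among the generating (trivial) cofibrations; then $u^{\boxprod k}$ is a cofibration (pointwise pushout-product axiom), so $w_k(u)=\coprod_\lambda Z_\lambda\otimes_p u^{\boxprod k}\in\Ii^{\otimes_p}=\Kk$, and when $u$ is a generating trivial cofibration the iterated pushout-product puts $w_k(u)$ in $Sat(\{\Jj\otimes_p x\})=\Jj^{\otimes_p}=\Ll\subset\WW$ by the pointwise monoid axiom; Proposition~\ref{Ktheorem}(a) and Theorem~\ref{Kcorol}(a) then give the transferred model structure. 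For (2): pointwise $h$-monoidality makes $[A,\M]$ left proper, makes each $Z_\lambda\otimes_p u^{\boxprod k}$ (hence $w_k(u)$) an $h$-cofibration for $u$ a cofibration with cofibrant domain, and makes the pushouts defining $Z_\lambda$ homotopy pushouts, so that $Q_k(\phi),L_k(\phi)$ are weak equivalences whenever $\phi$ is a weak equivalence between $P$-algebras with cofibrant underlying objects; Proposition~\ref{Ktheorem}(b) and Theorem~\ref{Kcorol}(b) give relative left properness. For (3): strong pointwise $h$-monoidality additionally closes $\WW$ under $\otimes_p$, so the cofibrancy hypotheses on the $P$-algebras may be dropped; Proposition~\ref{Ktheorem}(c) and Theorem~\ref{Kcorol}(c) give left properness.

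The main obstacle is the structural description of the second paragraph. In the tame case the fibres of $\qa^{(k)}\to\mathbf{C}_k$ carry terminal objects, so the fibrewise colimit is read off immediately, whereas quasi-tameness supplies only a discrete fundamental groupoid on each fibre, and one must show that this weaker hypothesis still forces the fibrewise colimit to split off the pure-$X$ factor $Z_\lambda$ — i.e.\ that the relevant colimits remain rigid — which is exactly the finality computation that has to be carried out carefully via Lemmas~\ref{dist} and~\ref{tfg}, together with the identification of the components of $\overline{\qa}^{(k)}$ by reduced $X$/$K$/$L$-strings from the example in Section~\ref{sec:quasi-tame}.
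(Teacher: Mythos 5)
Your proposal is correct and follows essentially the same route as the paper's own proof: reduce to controlling $w_k(u)$ via Proposition~\ref{Ktheorem}/Theorem~\ref{Kcorol}, split the colimit diagram $\XX$ into an $X$-factor and a $K/L$-factor, and use quasi-tameness together with Lemmas~\ref{dist} and~\ref{tfg} to prove a finality/splitting statement forcing the colimit to factor, so that $w_k(u)$ becomes (connected component by connected component) a fixed object tensored with a pushout-product, after which the Schwede--Shipley argument closes the proof. The paper phrases the splitting as finality of the diagonal-type functor $\db\colon\qa_c^{(k)}\to\qa_c^{(k)}[F^{-1}]\times\qa_c^{(k)}[X^{-1}]$ rather than as a fibration over a pattern category with $\cop$-fibres, but this is the same underlying mechanism with different bookkeeping.
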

\begin{proof}

We have to prove that for any cofibration $f:K\to L$ the canonical morphism $w_k:Q_k\to L_k$ is in $\Ii^{\otimes_p}$ and it is in $\WW\cap\Ii^{\otimes_p}$ if $f$ is a trivial cofibration.

 Let $X=(X,K,L,g,f) $ be an algebra of $\Pp_{f,g}$ and let ${\XX}:\h \rightarrow [A,\M]$ be the functor (\ref{XXX}).  To compute $Q_k$ it is enough to restrict $\XX$ to  $ \Pp^{\Pp_f}$ because  $\wa^{(k)}$ and $\qa^{(k)}$ are  full subcategories of $ \Pp^{\Pp_f}.$ This restriction represents the $\Pp_f$-algebra $(X,K,L,f)$ and we will denote it by $\XX$ also. 

Observe that the functor $\XX$ is canonically isomorphic to the product of two functors \begin{equation}\label{XxXf} \XX_x(b)\otimes_p \XX_f(b)\end{equation}  which are defined as follows:
$$\XX_x(b) = \bigotimes_{i\in \text{sources of~} X\text{-edges} }X_i  $$
and on $X$-generators it acts as $\XX$ (so one can think about it as replacing all $L$ and $K$ by the tensor unit $I$) but on $F$-generators it acts as  identity. 
The functor $\XX_f(b)$ is defined as
$$\XX_f(b) = \bigotimes_{j\in \text{sources of~} K\text{-edges}} K_j \bigotimes_{k\in \text{sources of~} L\text{-edges}} L_k  $$
and it acts as identity on $X$-generators and as $\XX$ on $F$-generators.  

We summarize the situation as the following commutative diagram
\begin{align} \label{decomposition}
\xymatrix@C = +4em@R = +2em{
 \Pp^{\Pp_{f}} \ar[r]^{\XX} \ar[d]_{\Delta} & [A, \M] \\
 \Pp^{\Pp_{f}}\times  \Pp^{\Pp_{f}} \ar[r]^{(\XX_x,\XX_f)} & [A,\M]\times [A,\M] \ar[u]_{-\otimes_p -} 
}
\end{align}

By the second point of Lemma \ref{tfg} we have a canonical isomorphism $\pi_0(\cop)\to \pi_0( \Pp^{\Pp_{f}} )$ which induces the following decomposition 
$$\qa^{(k)} \cong \coprod_{c\in \pi_0(\cop)} \qa^{(k)}_c. $$  
Thus the colimit of a functor over $\qa^{(k)}$ is a coproduct of the colimits over the categories $\qa^{(k)}_c.$  We now focus on how we can compute these colimits. 

Restricting diagram (\ref{decomposition}) to $\qa^{(k)}_c$ we get  a diagram: 
\begin{align} \label{diagram:quasi-tame finality triangle}
\xymatrix@C = +4em@R = +2em{
\qa_c^{(k)}\ar[r]^{\XX} \ar[d]_{\Delta} & [A,\M] \\
\qa^{(k)}_c\times  \qa^{(k)}_c \ar[r]^{(\XX_x,\XX_f)} &[A, \M]\times [A,\M] \ar[u]_{-\otimes_p -}  
}
\end{align}

Let now $\qa^{(k)}_c[X^{-1}]$ be the localisation of $\qa^{(k)}_c$ at the subset of $X$-generators and similarly $\qa^{(k)}_c[F^{-1}]$ be the localization of $\qa_c^{(k)}$ at the subset of $F$-generators and observe that $\XX_x$ can be factorized through $\qa_c^{(k)}[F^{-1}]$ and $\XX_f$ can be factorized through $\qa_c^{(k)}[X^{-1}].$
We then have a commutative diagram of functors
\begin{align} 
\xymatrix@C = +4em@R = +2em{
\qa_c^{(k)} \ar@/^-7.3ex/[dd]_{\db} \ar[r]^{\XX} \ar[d]_{\Delta} & [A,\M]  \\
\qa_c^{(k)}\times  \qa_c^{(k)}\ar[d] \ar[r]^{(\XX_x,\XX_f)} & [A,\M]\times [A,\M] \ar[u]_{-\otimes_p -} \\
\qa_c^{(k)}[F^{-1}]\times \qa_c^{(k)}[X^{-1}]\ar[ru]_{(\XX_x,\XX_f)}& 
}
\end{align}
We are going to prove that the left vertical composite  $\db$ is a final functor, so that the colimit of $\XX$ over $\qa_c^{(k)}$ is isomorphic to the  colimit of 
$\XX_x\otimes \XX_f$ over the category $\qa_c^{(k)}[F^{-1}]\times \qa_c^{(k)}[X^{-1}].$

\begin{lem}\label{PAA}  For a $\Sigma$-free quasi-tame substitude $(P,A)$ the   fundamental groupoid  $\Pi_1(\Pp^{\Pp+2{\Aa}})$ is equivalent to a discrete groupoid.

\end{lem}

\begin{proof}  Observe the classifier  (as a category) $\Pp^{\Pp+2{\Aa}}$ can be decomposed as a coproduct of categories 
$$\Pp^{\Pp+2{\Aa}} \simeq \coprod_{k\ge 0} \coprod_{p+q = k} \Pp^{\Pp+2{\Aa}}(p,q),$$
where $\Pp^{\Pp+2{\Aa}}(p,q)\subset \Pp^{\Pp+2{\Aa}}$ is the full subcategory of objects of type $(p,q).$ Since we have no $F$-generators in the classifier $\Pp^{\Pp+2{\Aa}}$ we have the following isomorphisms of categories for every $(p,q)$ with $p+q = k.$   $$ \Pp^{\Pp+2{\Aa}}(p,q) \simeq \Pp^{\Pp+2{\Aa}}(k,0) = \cop(k)$$
where $\cop(k)\subset \cop$ is the full subcategory of objects of type $(k,0).$
We finish the proof by observing that 
   $$\cop \simeq \coprod_{k\ge 0} \cop(k).$$

\end{proof}

We can now show that the comma-category $(b,b')/\db$ is nonempty and connected for every $(b, b')$ in $\qa_c^{(k)}[F^{-1}]\times \qa_c^{(k)}[X^{-1}].$

For nonemptiness we must find $(b'',b'')$ in the image of $\db$ with a morphism from $(b ,b')$ to $(b'',b'')$ in $\qa_c^{(k)}[F^{-1}]\times \qa_c^{(k)}[X^{-1}]$. 
By assumption, the $b$ and $b'$ live in the same connected component      $\qa_c^{(k)}$. Thus we have a zig-zag of morphisms in   $\qa_c^{(k)}$ where each morphism
is an $X$ or $F$-generator: 
$$b\ot b_0 \to b_1 \ot b_2 \to \cdots \ot b_n \to b'.$$ 
Using distributivity and commutativity of $X$-generators over $F$-generators from Lemma \ref{dist} we can assume that in this zig-zag there is a $b_k = b''$  such that on the left-hand side of $b''$ there are only $F$-generators but on the right-hand side there are only $X$-generators in the sequence. Then the left-hand side represents a morphism $b\to b''$ in  $\qa_c^{(k)}[F^{-1}]$ and the right-hand side represents a morphism $b'\to b''$ in $\qa_c^{(k)}[X^{-1}]$ so together they provide a morphism $(b ,b') \to (b'',b'')$ in $\qa_c^{(k)}[F^{-1}]\times \qa_c^{(k)}[X^{-1}]$.

Next, we study morphisms in the categories  $\qa_c^{(k)}[F^{-1}]$ and $ \qa_c^{(k)}[X^{-1}]$. First observe that any cospan of $F$-generators $b\stackrel{f_1}{\to} b'\stackrel{f_2}{\ot} b''$ in $\qa_c^{(k)}$ can be completed to a commutative square 

\begin{equation}
\xygraph{{b'''}="p0" [r] {b''}="p1" [d] {b'}="p2" [l] {b}="p3" "p0":"p1"^-{f_1'}:"p2"^-{f_2}:@{<-}"p3"^-{f_1}:@{<-}"p0"^-{f_2'}} 
\end{equation}
in which $f_1',f'_2$ are $F$-generators. To find such a $b'''$ it is enough to change all $L$-edges in $b$ to $K$-edges.  

It follows from this and Lemma \ref{dist} that a typical morphism in  $\qa_c^{(k)}[F^{-1}]$ from $b\to b''$ can be represented by a zigzag of morphisms of the form
$$b\stackrel{f_1}{\ot} b_0 \stackrel{f_2}{\to} b_1\stackrel{x}{\to} b''$$
where $f_1,f_2$ are $F$-generators and $x$ is an $X$-generator.

Similarly a morphism from $b'\to b''$ in $\qa_c^{(k)}[X^{-1}]$ can be represented by a zigzag of the form:
$$b'\stackrel{x_0}{\ot} b_0 \stackrel{x_1}{\to} b_1 \ot \cdots \to b_{n-1} \stackrel{x_n}{\to}b_n \stackrel{f}{\to}b''$$ 
where $x_0,\ldots,x_n$ are $X$-generators and $f$ is an $F$-generator. 

An object $(b,b')\to (b'',b'')$ in $(b,b')/\db$ is, therefore, represented by  a zigzag of the form
\begin{equation} \label{zz1} b\stackrel{f_1}{\ot} b_0 \stackrel{f_2}{\to} b_1\stackrel{x}{\to} b'' \stackrel{f}{\ot} b_n \stackrel{x_n}{\to} b_{n-1} \ot \cdots \to b_{1} \stackrel{x_1}{\ot}b_0 \stackrel{x_0}{\to} b' .\end{equation}
We now have to show that given a similar zigzag $(b,b')\to (b''',b''')$ 
\begin{equation}\label{zz2} b\stackrel{f'_1}{\ot} b'_0 \stackrel{f'_2}{\to} b'_1\stackrel{x'}{\to} b''' \stackrel{f'}{\ot} b'_{n'} \stackrel{x'_{n'}}{\to} b'_{n'-1} \ot \cdots \to b'_{1} \stackrel{x'_1}{\ot}b'_0 \stackrel{x'_0}{\to} b' \end{equation}
one can connect $b''$ and $b'''$ by a zigzag of morphisms in $\qa_c^{(k)}$ together with corresponding zigzags making everything commute. 

For this we first pay attention to the cospan 
$$b_1\stackrel{x}{\to} b'' \stackrel{f}{\ot} b_n$$ in the middle of the zigzag (\ref{zz1}). By Lemma \ref{dist} one can replace it by a span 
$$b_1\stackrel{\bar{f}}{\ot} \bar{b''} \stackrel{\bar{x}}{\to} b_n.$$
Moreover, one can assume that $\bar{b''}$ does not have $L$-colored edges because if it does we can always cover it by an object without any. 
We now can see that we have a morphism in  $(b,b')/\db$ from  the object $(b,b')\to (b,\bar{b''})$  (a zigzag obtained by replacement of the middle cospan by a span)  to  the object $(b,b')\to (b'',b'').$  Observe that all morphisms in the zigzag $(b,b')\to (b,\bar{b''})$ from the left hand side of $\bar{b''}$ are $F$-generators but all morphisms from the right hand side are $X$-generators. 

We do a similar transformation for the zigzag (\ref{zz2}) and observe that the new object $\bar{b'''}$ is related to $\bar{b''}$ by a zigzag of $F$-generators and both do not have $L$-colored edges. Hence, $\bar{b''} = \bar{b'''} = m$ and we have the following  diagram in $\qa_c^{(k)}:$ 
\begin{align} 
\xymatrix@C = +3em@R = +1em{
 & & b_1\ar[r]^x & b'' & \ar[l]_f b_n & &  \\
 & \ar[dl] b_0 \ar[ur]& &   & & &  \\
  b    &  &  &\ar[lll] d \ar[dd] \ar[uu] \ar[ruu]_{\bar{x}}\ar[rdd] \ar[ldd]\ar[lld] \ar[llu]\ar[luu]^{\bar{f}}   & & & \ar@{-->}[lluu]  \ar@{-->}[lldd] b'\\
  & \ar[lu] b'_0 \ar[rd]& & & & & \\
  & & b'_1\ar[r]&  b''' &\ar[l]  b'_{n'} & &      
}
\end{align}
in which the subdiagram of solid arrows commutes and in which the right-hand side diagram consists of two solid arrows and dashed arrows representing zigzags of $X$-generators in $\qa_c^{(k)}$. Hence, by virtue of Lemma \ref{PAA} it can be filled in to the commutative diagram using only $X$-generators. Hence the finality of $\db$ is proved.   

\medskip

We now come back to the proof that $w_k:Q_k\to L_k$ is in $\Ii^{\otimes_p}.$ It follows from what we have just proved  that the colimit of $\XX$ over $\qa_c^{(k)}$ is canonically isomorphic to the colimit of $\XX_x\otimes_p \XX_f$ over  $\qa_c^{(k)}[F^{-1}]\times \qa_c^{(k)}[X^{-1}].$ Then we observe that $w_k$ is the usual comparison morphism in the punctured $k$-cube and so the standard argument \cite{batanin-berger,SS00} show that under the pointwise pushout-product and pointwise monoid axioms for $\otimes_p$ it is in $\Ii^{\otimes_p}$ if $f$ is a cofibration and it is a weak equivalence if $f$ is a trivial cofibration. The proof of the rest of the theorem also follows standard arguments from \cite{batanin-berger}.   
\end{proof}

We conclude with a multi-paragraph remark illustrating what goes wrong for monads that fail to be quasi-tame.

\begin{remark}
Computations show that Theorem \ref{thm:quasi-tame-admissible} is close to a characterization of quasi-tameness, in the sense that, if $(P,A)$ is a $\Sigma$-free substitude that is admissible for all $\M$ such that $[A,\M]$ satisfies the pointwise monoid axiom, then $(P,A)$ is quasi-tame. Indeed, let $\M$ be the projective model structure on the category of chain complexes over a field of characteristic $p>0$. Let $(P,A)$ be a non-quasi-tame substitude, meaning that $\Pi_1(\cop)$ is not equivalent to a discrete groupoid. We conjecture that there cannot be a transferred model structure in this setting. A possible line of attack is to convert the non-quasi-tameness of $(P,A)$ into a symmetric group action, with which we can mimic Example 2.9 of \cite{batanin-white-eilenberg-moore} (where we showed that the substitude for $I$-colored symmetric operads is not admissible).

Skipping many technical details, we first take a coproduct of the terminal algebra of $P$ and the free algebra on a cofibrant contractible collection $K_i, i\in I$. Then $\Pi_1(\cop)$ acts on certain tensor products of $K_i$ by permutations, and we will show below that all morphisms in $\Pi_1(\cop)$ are just these permutations. More precisely, this action exhibits an embedding of the groupoid $\Pi_1(\cop)$ into the groupoid of permutations on the $K_i$. The reason is that morphisms in $\Pi_1(\cop)$ are generated by compositions in $P$, their inverses, and certain permutations of the $K_i$ which might happen when we compose operations in $P$. A nontrivial loop can only happen as a nontrivial permutation of $K_i$ terms. Indeed, one can show easily that in the classifier $P^P$, every morphism can be factored as a permutation of colors followed by an order-preserving morphism. This factorization can be lifted to $\cop$ via the morphism of classifiers induced by $P+A \to P$. So in $\cop$, we have two  subcategories: of order-preserving morphisms and of certain isomorphisms generated by permutations of $K_j.$ Using this factorization one can reduce any element in $\Pi_1(\cop)$ to a triangle like

\begin{align*}
\xymatrix{
b \ar[rr] \ar[dr] & & a \ar[dl]\\
& c & 
}
\end{align*}

where the top morphism is an isomorphism induced by permutations of $K_j$, the two other morphisms are order preserving, and $c$ is a local terminal object in the order-preserving subcategory of $\cop$.  If we assume that the induced element in $\Sigma_n$ is trivial then from the terminality of $c$ it follows that the triangle commutes. So, a non-trivial element of $\Pi_1$ yields a non-trivial $\Sigma_n$-action, which we can then convert into trivial cofibration whose pushout is not a weak equivalence, just as in Example 2.9 of \cite{batanin-white-eilenberg-moore}.

\end{remark}

\subsection{Examples of quasi-tame substitudes} \label{subsec:qt-examples}

Quasi-tame substitudes are much more numerous than tame substitudes. In this section, we prove that the polynomial monads for various Grothendieck constructions of Section \ref{sec:Gr(T)} are quasi-tame, so that transferred model structures exist. We begin by proving that every tame substitude is quasi-tame. 

\begin{proposition}\label{proptameimpliesquasitame}
Let $(P,A)$ be a tame substitude. Then $(P,A)$ is quasi-tame.
\end{proposition}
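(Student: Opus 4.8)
The plan is to unwind the definition of tameness for substitudes and reduce the statement to two elementary facts about fundamental groupoids: that a category with a terminal object has trivial fundamental groupoid, and that $\Pi_1$ sends coproducts of categories to coproducts of groupoids.

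First I would recall that, by definition, $(P,A)$ being a tame substitude means that the classifier $\cop = \Pp^{\Pp+\Aa}$ decomposes as a coproduct of categories
\[
\cop \;\cong\; \coprod_{c\in\pi_0(\cop)} \cop_c
\]
in which each connected component $\cop_c$ possesses a terminal object (for $A$ discrete this is exactly the Batanin--Berger condition that $T^{T+1}$ be a coproduct of categories with terminal object, cf. \cite{batanin-berger}). Next I would record the standard fact that a small category $\cat{C}$ with a terminal object $t$ has $\Pi_1(\cat{C})$ equivalent to the trivial groupoid $\ast$: the unique arrows $!_a\colon a\to t$ assemble into a natural transformation from $\mathrm{id}_{\cat{C}}$ to the constant functor at $t$, which induces a homotopy on nerves from $\mathrm{id}_{N\cat{C}}$ to the constant map at the vertex $t$; hence $N\cat{C}$ is contractible and $\Pi_1(\cat{C}) = \Pi_1(N\cat{C})$ is equivalent to $\ast$. (Alternatively, in the localisation $\cat{C}[\cat{C}^{-1}]$ each $!_a$ becomes invertible, so $\Pi_1(\cat{C})$ is connected, and the same natural transformation forces its automorphism group at $t$ to be trivial.)

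Then I would observe that, since $\cop$ has no morphisms between objects lying in distinct connected components, localising $\cop$ at all of its morphisms is computed componentwise, so
\[
\Pi_1(\cop) \;\simeq\; \coprod_{c\in\pi_0(\cop)} \Pi_1(\cop_c).
\]
Combining this with the previous step, each $\Pi_1(\cop_c)$ is equivalent to $\ast$, whence $\Pi_1(\cop)$ is equivalent to the discrete groupoid on the set $\pi_0(\cop)$. By definition this says precisely that $(P,A)$ is quasi-tame, which completes the proof.

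There is no genuine obstacle in this argument; the only points requiring care are purely bookkeeping ones, namely that the definition of tame substitude is the one phrased in terms of $\cop=\Pp^{\Pp+\Aa}$ (and not, say, in terms of $\ho=\Pp^{\Pp+2\Aa}$ or the larger classifier $\Pp^{\Pp_{f,g}}$), and the invocation of the classical fact that a terminal object makes a nerve contractible, so that the passage from "coproduct of categories with terminal object" to "$\Pi_1$ equivalent to a discrete groupoid" is immediate.
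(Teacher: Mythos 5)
Your proof is correct and follows essentially the same route as the paper's: tameness gives a discrete final subcategory of $\cop$ (equivalently, each connected component has a terminal object), whence each component is contractible and $\Pi_1(\cop)$ is equivalent to a discrete groupoid. The paper's proof is terser; you additionally spell out the two bookkeeping facts (contractibility of the nerve of a category with terminal object, and that $\Pi_1$ respects coproducts of categories), but the content is the same.
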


\begin{proof}
Since $(P,A)$ is tame, it has a discrete final subcategory. It follows that the fundamental groupoid consists of a discrete set of points, because $\cop$ is a coproduct of categories with terminal object. Each connected component of $\cop$ is contractible, so $(P,A)$ is quasi-tame.
\end{proof}

For tame $(P,A)$, the proof of Theorem \ref{thm:quasi-tame-admissible}, more precisely of the finality of $\db$, simplifies. We can let $C = \qa_c^{(k)}$ and let $C'$ be the discrete final subcategory guaranteed by tameness. In $C'$, all $X$-vertices have already been multiplied together, so $C'[X^{-1}]$ is isomorphic to $C'$. Since $C'[F^{-1}]$ is a contractible groupoid, it follows that $\bar{\Delta'}: C' \to C'[F^{-1}] \times C'[X^{-1}]$, taking $x\mapsto (x,x)$, is final. 

The monad for monoids is tame, hence quasi-tame. Its quasi-tameness can also be seen directly, following the logic above, since $C[X^{-1}] \cong C$ by inspection of the free monoid filtration. The case for non-symmetric operads is similar, but with alternating trees instead of alternating letters. In this setting, $C'[M^{-1}]$ is a coproduct of indiscrete categories.

The monad for nonreduced symmetric operads is polynomial but is not quasi-tame, because the presence of objects with nontrivial $\Sigma_2$-automorphisms creates non-contractible loops in $\pi_1(\cop)$. 
In practice, this means that, in the terminology of (\ref{diagram:quasi-tame finality triangle}), to compute the colimit of $\XX$, one would need $\XX_f \otimes_{\Sigma_2} \XX_x$.

We have seen several examples of quasi-tame polynomial monads in Section \ref{sec:quasi-tame} and above. We expect to see many more examples in future papers. The following is our main tool for proving quasi-tameness in this paper.

\begin{definition}
	We will say that a morphism of $\Sigma$-free substitudes $i: (T,A) \hookrightarrow (G,U)$ realizes a Grothendieck construction for $F: B \to \Cat$ if:
	\begin{enumerate}
		\item $\int F = \Alg_{(G,U)}$, $B= \Alg_{(T,A)}$,
		\item the canonical projection $\int F \to B$ is isomorphic to the restriction functor $i^*: \Alg_{(G,U)} \to \Alg_{(T,A)}$ along $i$,
		\item $i$ has a retraction $r: (G,U) \to (T,A)$.
	\end{enumerate}
\end{definition}

\begin{theorem}\label{conjecture}
	Assume that a morphism of $\Sigma$-free substitudes $i: (T,A) \hookrightarrow (G,U)$ realizes a Grothendieck construction for $F: B \to \Cat$ and
	\begin{enumerate}
		\item $(T,A)$ is quasi-tame,
		\item the functor $U \to A$ given by the retraction $r$ can be factorized as follows
		\[
			\xymatrix{
				U \ar[rr] \ar[rd] && A \\
				& A+A \ar[ru]_{\nabla}
			}
		\]
		where $\nabla$ is the folding morphism
		\item the square of polynomial monads
		\[
			\xymatrix{
				\Gg + \Uu \ar[r] \ar[d] & \Gg \ar[d] \\
				\Tt + 2\Aa \ar[r] & \Tt
			}
		\]
		induces a faithful functor
		\begin{equation}\label{equivalencegroupoids}
			\Gg^{\Gg + \Uu}  \to r^* \left( \Tt^{\Tt + 2\Aa} \right)
		\end{equation}
	\end{enumerate}
	Then $(G,U)$ is quasi-tame as well.
\end{theorem}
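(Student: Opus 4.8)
The plan is to establish that the fundamental groupoid $\Pi_1(\Gg^{\Gg+\Uu})$ is equivalent to a discrete groupoid, which is by definition the assertion that $(G,U)$ is quasi-tame; equivalently, that each connected component of $\Gg^{\Gg+\Uu}$ has trivial fundamental group. First I would extract the consequences of the hypotheses. By (1), $(T,A)$ is quasi-tame, so Lemma \ref{PAA} gives that $\Pi_1(\Tt^{\Tt+2\Aa})$ is equivalent to a discrete groupoid; restriction along $r$ does not change the underlying category, so $\Pi_1\big(r^*(\Tt^{\Tt+2\Aa})\big)$ is discrete as well, and so is $\Pi_1(\Tt^{\Tt+\Aa})$, the classifier whose homotopy discreteness is the quasi-tameness of $(T,A)$. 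Thus hypothesis (3) supplies a faithful functor $E\colon \Gg^{\Gg+\Uu}\to r^*(\Tt^{\Tt+2\Aa})$ whose target is homotopically discrete, and the whole problem becomes: upgrade this faithfulness, using the retraction $r$ and the folding hypothesis (2), into genuine control of $\Pi_1$ of the source. Faithfulness alone is not enough: a faithful functor can create a loop (e.g.\ the inclusion of a non-contractible poset into the terminal category), so the real work is structural.

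The core argument I would run mirrors the finality argument in the proof of Theorem \ref{thm:quasi-tame-admissible}. Hypothesis (2) splits the colours of $(G,U)$ into an \emph{operad part} --- the copy of $A$ through which $U\to A+A\xrightarrow{\nabla}A$ factors along the section $i$ --- and an \emph{algebra part}, the complementary copy; correspondingly the generating morphisms of $\Gg^{\Gg+\Uu}$ split into operad-direction generators, which $r$ carries to generators of $\Tt^{\Tt+\Aa}$, and algebra-direction generators. I would first prove that these two classes commute and distribute over each other exactly as in Lemma \ref{dist}, so that the functor remembering only the operad part presents $\Gg^{\Gg+\Uu}$ as sitting over $r^*(\Tt^{\Tt+\Aa})$ with ``fibres'' the algebra-direction subcategories. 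The base has discrete $\Pi_1$ by quasi-tameness of $(T,A)$; and along $E$, each algebra-direction fibre maps faithfully into a fibre of the decomposition $\Tt^{\Tt+2\Aa}\simeq\coprod_{p,q}\Tt^{\Tt+2\Aa}(p,q)$ of Lemma \ref{PAA}, each such fibre being isomorphic to a piece of $\Tt^{\Tt+\Aa}$ and hence homotopically discrete. A van Kampen / Thomason-type gluing of these two facts --- a loop in a component is, up to homotopy, either supported in a single fibre or projects to a loop in the base, and is nullhomotopic in either case --- then yields the claim; the folding condition (2) is what makes the component decomposition of $\Gg^{\Gg+\Uu}$ refine that of $r^*(\Tt^{\Tt+\Aa})$ compatibly with the fibres, so the argument can be run componentwise.

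The main obstacle is precisely the conversion of faithfulness of $E$ into triviality of $\Pi_1$ of the algebra-direction fibres. The input I would use is that, by the content of hypothesis (3), the relations in $\Gg^{\Gg+\Uu}$ are exactly those pulled back along $E$ --- the commuting squares of (the substitude analogue of) Lemma \ref{dist} together with the $E$-preimages of the relations in $\Tt^{\Tt+2\Aa}$ --- so that any elementary homotopy (commuting triangle) witnessing the nullhomotopy of $E(\gamma)$ downstairs lifts to one upstairs: its operad-direction part transports back along $i$ because $r\circ i=\mathrm{id}$, and by (2) its algebra-direction colours are rigid, forcing the lift to commute. A subsidiary point to check carefully is that the square of classifiers induced by the square of monads in (3) really is a homotopy pullback once the algebra direction is discarded, so that the fibration heuristic above is literal; granting that, the homotopy groups of the base, of the total classifier, and of $\Tt^{\Tt+2\Aa}$ being concentrated in degree $0$ forces the same for $\Gg^{\Gg+\Uu}$, completing the proof.
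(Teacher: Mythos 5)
Your proposal diverges substantially from the paper's own proof, which is only two sentences long: since $(T,A)$ is quasi-tame, Lemma~\ref{PAA} gives that $\Pi_1(\Tt^{\Tt+2\Aa})$ is equivalent to a discrete groupoid, and the paper then concludes immediately that $\Pi_1(\Gg^{\Gg+\Uu})$ is discrete \emph{because the functor $\Gg^{\Gg+\Uu}\to r^*(\Tt^{\Tt+2\Aa})$ is faithful}. You are right to be suspicious of exactly this step. Faithfulness (even full faithfulness) of a functor of categories does not imply that discreteness of $\Pi_1$ is inherited by the source: the face poset of $\partial\Delta^2$ sits fully inside the (contractible) face poset of $\Delta^2$, yet has $\Pi_1\cong\mathbb{Z}$; and, as you note, any poset maps faithfully to the terminal category. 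So the paper's proof, read literally, has the gap you identify. What actually closes the gap in the paper's applications (see the proof of Proposition~\ref{propositiongrnopquasitame}) is \emph{more} than faithfulness: there the functor factors as $\Gg^{\Gg+\Uu}\to C\hookrightarrow C\sqcup D\cong\Tt^{\Tt+2\Aa}$, where the first functor is a right adjoint (with identity unit) and the second is the inclusion of a summand. Adjoint functors induce homotopy equivalences of nerves, so $\Pi_1(\Gg^{\Gg+\Uu})\simeq\Pi_1(C)$, and $\Pi_1(C)$ is a disjoint sub-groupoid of the discrete $\Pi_1(\Tt^{\Tt+2\Aa})$. That is the structure the theorem really relies on.

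Your proposed replacement, however, is not the same repair and has its own unverified steps. You outline a fibration/van Kampen strategy: split the generating morphisms of $\Gg^{\Gg+\Uu}$ into ``operad-direction'' and ``algebra-direction'' generators, view the classifier as fibered over $r^*(\Tt^{\Tt+\Aa})$, and conclude discreteness of $\Pi_1$ by gluing. For this you assert (and flag as ``a subsidiary point to check carefully'') that the square of classifiers is a homotopy pullback; nothing in hypotheses (1)--(3) asserts or obviously implies that, and the distributivity relations of Lemma~\ref{dist} alone do not make a Grothendieck fibration. In the paper's examples the relevant functor is a \emph{coreflection onto a summand}, not a fibration, and the two are not interchangeable. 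So while your diagnosis of the gap is correct and genuinely useful, your proposed route introduces a substantial unproved claim, and the shortest correct fix is closer to what the applications already do: strengthen hypothesis~(3) to require that the induced functor admits an adjoint factoring it through a union of connected components of $\Tt^{\Tt+2\Aa}$ (or, equivalently, require faithfulness of the induced functor on fundamental groupoids rather than on the classifiers themselves).
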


\begin{proof}
	Using the fact that $(T,A)$ is quasi-tame, we get from Lemma \ref{PAA} that $\Pi_1(\Tt^{\Tt+2{\Aa}})$ is equivalent to a discrete groupoid. We deduce that $\Pi_1( \Gg^{\Gg + \Uu})$ is also equivalent to a discrete groupoid since the functor \ref{equivalencegroupoids} is faithful.
\end{proof}

We saw in Section \ref{sec:Gr(T)} that the monad for $Gr(NOp(I))$ is not tame, where $I$ is a non-empty set of colors. Fortunately, this monad is quasi-tame.

\begin{proposition}\label{propositiongrnopquasitame}
The substitude for pairs $(O,A)$, where $O$ is an $I$-colored non-symmetric operad and $A$ is an $O$-algebra, is quasi-tame.
\end{proposition}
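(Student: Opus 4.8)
The plan is to apply Theorem~\ref{conjecture} to the evident inclusion of $\Sigma$-free substitudes $i\colon NOp(I)\hookrightarrow Gr(NOp(I))$, which views a bouquet-decorated planar tree as an all-circled $Gr(NOp(I))$-tree with no boxes. By the construction of $Gr(NOp(I))$ in Section~\ref{sec:Gr(T)} (cf.\ Proposition~\ref{prop:poly-for-Gr}), this inclusion realizes the Grothendieck construction for the functor $\Phi\colon\Alg_{NOp(I)}^{op}\to\Cat$ sending an $I$-coloured non-symmetric operad $O$ to $\Alg_O$: indeed $\int\Phi=\Alg_{Gr(NOp(I))}$ and the projection $\int\Phi\to\Alg_{NOp(I)}$ is exactly $i^{*}$. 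It then remains to produce a retraction $r$ of $i$ and to check the three numbered hypotheses of Theorem~\ref{conjecture}.

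Hypothesis (1) holds because $NOp(I)$ is tame (see \cite{batanin-berger} and the alternating-trees discussion in Section~\ref{subsec:qt-examples}), hence quasi-tame by Proposition~\ref{proptameimpliesquasitame}. For the retraction, I would take $r\colon Gr(NOp(I))\to NOp(I)$ to be the identity on the colour subset $PBq(I)$, to send a colour $j\in I$ to the unary bouquet $(j;j)$, to send a circled tree to its underlying bouquet-decorated planar tree, and to send a boxed tree to the bouquet-decorated tree obtained by replacing each boxed leaf-vertex coloured $j$ with the unary vertex $(j;j)$ and erasing the outer box; a routine check shows that $r$ is a morphism of substitudes with $r\circ i=\mathrm{id}$. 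Since the colour set of $Gr(NOp(I))$ is the coproduct $PBq(I)\sqcup I$ and $r$ is the identity on the first summand, the colour map $U\to A$ factors through $PBq(I)+PBq(I)\xrightarrow{\nabla}PBq(I)$, giving hypothesis (2); the accompanying square of polynomial monads of hypothesis (3) is then available.

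The real content is the faithfulness in hypothesis (3) of the functor $\Gg^{\Gg+\Uu}\to r^{*}\!\left(\Tt^{\Tt+2\Aa}\right)$. I would prove this from the explicit combinatorial models of the two classifiers: objects of $\Gg^{\Gg+\Uu}$ are the circled/boxed planar trees of $Gr(NOp(I))$ carrying passive extra labels, objects of $r^{*}\!\left(\Tt^{\Tt+2\Aa}\right)$ are bouquet-decorated planar trees carrying passive labels, and in both classifiers the only generating morphisms are the $X$-generators, that is, internal compositions among the $X$-vertices --- on the $Gr(NOp(I))$-side also including the composite box-contraction move (\ref{morphismalgebra}) and the nullary move (\ref{morphismzerocircletozerobox}). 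The functor sends an object to its underlying bouquet-decorated tree (via $r$) and each generator to the corresponding internal composition downstairs. Because the box/circle data of a source or target tree is fixed independently of the morphisms between trees, and because boxed vertices are unary and carry no symmetry, no morphism information is lost downstairs, so the functor is faithful; the one point requiring care is to check that (\ref{morphismalgebra}) and (\ref{morphismzerocircletozerobox}) map to honest composites of $X$-generators in a way that remains injective on hom-sets. Granting this, Theorem~\ref{conjecture} --- via Lemma~\ref{PAA} applied to the quasi-tame $NOp(I)$ --- forces $\Pi_1(\Gg^{\Gg+\Uu})$ to be equivalent to a discrete groupoid, and Lemma~\ref{PAA} applied to $Gr(NOp(I))$ then delivers quasi-tameness. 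I expect this faithfulness to be the only genuine obstacle: one must rule out that passing to the underlying tree-of-bouquets identifies two distinct parallel morphisms, which would happen only if the boxed vertices carried extra automorphisms --- they do not, in contrast to the symmetric-operad situation of the preceding Remark.
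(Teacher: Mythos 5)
Your high-level approach is the same as the paper's: both invoke Theorem~\ref{conjecture} for the inclusion $i\colon NOp(I)\hookrightarrow Gr(NOp(I))$, and both reduce the question to faithfulness of the induced functor on classifiers. However, there are two concrete problems.

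First, your retraction $r$ is not a morphism of substitudes. You send a colour $j\in I$ to the \emph{unary} bouquet $(j;j)$ and propose to turn boxed leaf-vertices into unary vertices. Then a boxed tree with root colour $i$ and boxed leaves coloured $j_1,\dots,j_k$ is sent to a planar tree whose bouquet, after contraction, is $(j_1,\dots,j_k;i)$, whereas its target $i$ is sent to $(i;i)$. These disagree in general, so $r$ fails to commute with the target map. The paper instead sends $j\in I$ to the \emph{nullary} corolla $(\,;j)$; boxed leaf-vertices become $0$-ary vertices, the resulting tree contracts to $(\,;i)$, and targets match. (Equivalently: $r^*$ applied to an operad $O$ must return $(O,A)$ for some canonical $O$-algebra $A$; the nullary choice gives $A_j=O(\,;j)$, which is indeed an $O$-algebra, while $A_j=O(j;j)$ is not.)

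Second, and more seriously, you explicitly defer the faithfulness of $\Gg^{\Gg+\Uu}\to r^*\bigl(\Tt^{\Tt+2\Aa}\bigr)$ (``Granting this\ldots''), but that is precisely where the content lies. Your heuristic ``no morphism information is lost downstairs'' needs justification: the generator (\ref{morphismzerocircletozerobox}) is collapsed to an identity morphism by this functor, so some care is required to see that hom-sets do not get conflated. The paper's proof supplies the missing idea: factor the functor through the full subcategory $C\subset\Tt^{\Tt+2\Aa}$ of trees whose $L$-vertices have no inputs, show that the first leg $\Gg^{\Gg+\Uu}\to C$ admits a left adjoint (turn $L$-vertices into boxed $K$-vertices) with identity unit and with counit built from (\ref{morphismzerocircletozerobox}), and observe that $\Tt^{\Tt+2\Aa}$ decomposes as $C\sqcup D$ so the second leg is a full coproduct inclusion. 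You would need something of this nature to complete the argument.
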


\begin{proof}
	Let $(G,U)$ be the substitude for pairs $(O,A)$, where $O$ is an $I$-colored non-symmetric operad and $A$ is an $O$-algebra. We want to prove $(G,U)$ is quasi-tame. Let $(T,A)$ be the substitude for $I$-colored non-symmetric operads. We will prove that the conditions of Theorem \ref{conjecture} are satisfied.
	
	Since $(T,A)$ is tame \cite[Section 9.2]{batanin-berger}, it is also quasi-tame thanks to Proposition \ref{proptameimpliesquasitame}.
	
	Recall that the categories $A$ and $U$ are discrete and given by the sets $PBq(I)$ and $PBq(I) \sqcup I$ respectively, where $PBq(I)$ is the set of planar $I$-bouquets. The morphism $U \to A + A$ is given by the inclusion of $I$ in $PBq(I)$ which sends a color $i$ to the corolla with no inputs and the unique edge colored by $i$.
	
	Now let us prove that the last condition of Theorem \ref{conjecture} is satisfied. To do this, we need to describe more explicitly the morphism between algebras induced by the commutative square of polynomial monads. The category $\Gg^{\Gg + \Uu}$ has as objects trees with $X$-vertices or $K$-vertices, possibly boxed if they have valency one. The category $\Tt^{\Tt+2 \Aa}$ has as objects trees with $X$-vertices, $K$-vertices or $L$-vertices, all circled. The induced functor turns all the boxed $X$-vertices to circled $X$-vertices and boxed $K$-vertices to circled $L$-vertices. It leaves the circled vertices unchanged. Let $C$ be the full subcategory of $\Tt^{\Tt + 2 \Aa}$ of trees whose $L$-vertices have no inputs. Then the induced functor can be factorized
	\begin{equation}\label{factorisation}
		\xymatrix{
			\Gg^{\Gg+\Uu} \ar[rr] \ar[rd] && r^* \left( \Tt^{\Tt + 2 \Aa} \right) \\
			& C \ar@{^{(}->}[ru]
		}
	\end{equation}
	
	The first functor in the factorization has a left adjoint, which turns $L$-vertices to boxed $K$-vertices and leaves the other vertices unchanged. The unit is given by the identity morphisms and the counit is given by the morphisms \ref{morphismzerocircletozerobox}.

	Let $D$ be the full subcategory of $\Tt^{\Tt + 2 \Aa}$ of trees which contain at least one $L$-vertex with at least one input. Then $\Tt^{\Tt + 2 \Aa}$ is the disjoint coproduct $C \sqcup D$. The second functor in the factorization is the inclusion of the form $C \hookrightarrow C \sqcup D$. Therefore the composite of these two functors is indeed faithful, which concludes the proof.
\end{proof}

\begin{proposition}\label{propositionlmodquasitame}
	The substitude for pairs $(O,L)$, where $O$ is an $I$-colored non-symmetric operad and $L$ is a left $O$-module, is quasi-tame.
\end{proposition}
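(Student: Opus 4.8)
The plan is to deduce this from Theorem \ref{conjecture}, running exactly the argument of the proof of Proposition \ref{propositiongrnopquasitame} with ``algebra'' replaced throughout by ``left module''. Take $(T,A)$ to be the substitude for $I$-colored non-symmetric operads and $(G,U)$ the substitude for pairs $(O,L)$ with $L$ a left $O$-module. First I would check that $i\colon (T,A)\hookrightarrow (G,U)$ realizes the Grothendieck construction for $F\colon\Alg_{(T,A)}\to\Cat$, $O\mapsto\Alg_{O'}$: by the Lemma of Section \ref{subsec:Gr(Op,Mod)} and its $I$-colored version, $F(O)$ is the category of left $O$-modules, so $\Alg_{(G,U)}=\int F$; the canonical projection is restriction along $i$; and $i$ admits a retraction $r\colon(G,U)\to(T,A)$. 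On colors, $A=PBq(I)$ and $U=PBq(I)\sqcup PBq(I)$, the second summand being the color set $PBq(I)$ of the auxiliary operad $O'$ that encodes the module; $i$ is the inclusion of the first summand and $r$ the fold. Condition (1) of Theorem \ref{conjecture} holds since $(T,A)$ is tame by \cite[Section 9.2]{batanin-berger}, hence quasi-tame by Proposition \ref{proptameimpliesquasitame}; condition (2) holds because, under the identification $U\cong A+A$, the map $U\to A$ induced by $r$ is $\nabla$.

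The substance is condition (3): that the square of polynomial monads induces a faithful functor $\Gg^{\Gg+\Uu}\to r^*\!\bigl(\Tt^{\Tt+2\Aa}\bigr)$. I would first describe $\Gg^{\Gg+\Uu}$ in the spirit of Section \ref{subsec:Gr(Op,Mod)}: its objects are planar trees with circled $X$- and $K$-vertices (the operad part) together with a topmost layer of boxed $X$- and $K$-vertices (the module part), the one new feature relative to the algebra case being that these boxed vertices may now have arbitrary valency, since a module generator $L_{\vec{\jmath}}$ is indexed by a bouquet $\vec{\jmath}\in PBq(I)$ rather than by a single color. As in Proposition \ref{propositiongrnopquasitame}, the induced functor relabels boxed $X$-vertices as circled $X$-vertices and boxed $K$-vertices as circled $L$-vertices, fixing all else.

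I would then factor this functor as $\Gg^{\Gg+\Uu}\to C\hookrightarrow r^*\!\bigl(\Tt^{\Tt+2\Aa}\bigr)$, where $C$ is the full subcategory of $\Tt^{\Tt+2\Aa}$ spanned by those trees in which every vertex lying strictly above an $L$-vertex is an $X$-vertex, and $D$ is the complementary full subcategory. Since the generating morphisms of $\Tt^{\Tt+2\Aa}$ only contract internal $X$--$X$ edges and insert unary $X$-vertices --- and, as already in the monoid/module case, one cannot insert an $X$-vertex on the output edge of a boxed vertex --- neither $C$ nor $D$ can be exited by a morphism, so $\Tt^{\Tt+2\Aa}=C\sqcup D$ as categories and $C\hookrightarrow\Tt^{\Tt+2\Aa}$ is fully faithful. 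The image of $\Gg^{\Gg+\Uu}$ lies in $C$ because its boxed vertices carry nothing above them, and the functor $\Gg^{\Gg+\Uu}\to C$ is faithful by the same reflective-subcategory argument as in Proposition \ref{propositiongrnopquasitame}: one produces an adjoint that absorbs, into each boxed vertex, the tree of $X$-vertices sitting above the corresponding $L$-vertex and then relabels $L$-vertices as boxed $K$-vertices, with structure maps the identity on one side and composites of the morphisms \eqref{morphismzerocircletozerobox} on the other. Composing the two faithful functors gives condition (3), and Theorem \ref{conjecture} then yields that $(G,U)$ is quasi-tame.

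I expect the verification of condition (3) to be the main obstacle. In the algebra case of Proposition \ref{propositiongrnopquasitame} one could simply take $C$ to be the trees whose $L$-vertices have no inputs; with modules the $L$-vertices acquire inputs, and along with them ``$X$-forests'' can sprout above them under the morphisms of $\Tt^{\Tt+2\Aa}$, so the description of $C$ must be chosen (as above) so as to remain closed under those morphisms while still containing the image of $\Gg^{\Gg+\Uu}$ and receiving it faithfully. Pinning down the combinatorics of $\Gg^{\Gg+\Uu}$ when the boxed vertices have positive valency, and checking these three properties of $C$, is the only point where genuine extra work beyond Proposition \ref{propositiongrnopquasitame} is needed; once that is done the proof proceeds essentially verbatim as there and in \cite{batanin-berger}.
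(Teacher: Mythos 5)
Your overall strategy --- apply Theorem~\ref{conjecture} by factoring the comparison functor as $\Gg^{\Gg+\Uu}\to C\hookrightarrow r^*(\Tt^{\Tt+2\Aa})$ and exhibiting a left adjoint to the first factor --- is exactly the paper's, and you correctly flag that describing $\Gg^{\Gg+\Uu}$ and choosing $C$ is where the work lies. However, your choice of $C$ and your description of the adjunction do not work, and the paper's version is genuinely different at both points.

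You take $C$ to be the trees in which every vertex strictly above an $L$-vertex is an $X$-vertex, and you argue that this $C$ is a disjoint summand of $\Tt^{\Tt+2\Aa}$. The paper instead takes the strictly smaller full subcategory of trees in which the $L$-vertices have \emph{nothing} above them. Your larger $C$ is indeed closed under the generating morphisms, but that closure is not the needed property; what is needed is that the left adjoint to $\Gg^{\Gg+\Uu}\to C$ exist, and for your $C$ it does not. If $c\in C$ carries a nonempty $X$-tree $T$ sitting above an $L$-vertex $v$, then for every $a\in\Gg^{\Gg+\Uu}$ the tree $R(a)$ has its $L$-vertices topmost, and since the generators of $\Tt^{\Tt+2\Aa}$ only insert unary $X$-vertices or contract $X$--$X$ edges, there is no morphism $c\to R(a)$ at all: nothing can delete $T$ or fuse it into the non-$X$ vertex $v$. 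So the comma category $c\downarrow R$ is empty and the left adjoint is undefined at $c$. Relatedly, the ``absorbing'' operation you describe is not realized by any morphism of the classifier, because it would change the colour (arity) of the $L$-vertex, which no generator does.

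With the paper's smaller $C$, the left adjoint $L$ is well-defined, but its description has a feature you omit: besides turning each $L$-vertex into a boxed $K$-vertex, $L$ also inserts a boxed \emph{unary} $X$-vertex above every top circled $X$-vertex, so that the result is a legitimate module-outputting tree of $\Gg^{\Gg+\Uu}$ (all leaves of the circled part must be capped by boxed vertices once boxed vertices have positive arity). Consequently the unit $1_C\to RL$ is \emph{not} the identity --- it inserts circled unary $X$-vertices, i.e., it is built from the operad units --- and the counit $LR\to 1$ is given by the left module action morphisms, the higher-arity analogue of \eqref{morphismalgebra}, not by composites of \eqref{morphismzerocircletozerobox}. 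So the module case is not a direct transcription of the algebra case of Proposition~\ref{propositiongrnopquasitame}: both the left adjoint and both triangle morphisms change, which is precisely the ``genuine extra work'' you anticipated but then mis-sketched.
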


\begin{proof}
	The proof is very similar to the proof of Proposition \ref{propositiongrnopquasitame}. This time we will define the full subcategory $C$ of $\Tt^{\Tt + 2 \Aa}$ of trees whose $L$-vertices are all on top. Again, we will have a factorization as in \ref{factorisation}. The left adjoint of the first functor in the factorization turns $L$-vertices to boxed $K$-vertices and adds a boxed unary $X$-vertex above all the top $X$-vertices.

	The unit is given from the units of the operad, that is the morphisms which insert a circled unary vertex with label $X$. The counit is given from the left module actions, as in \ref{morphismalgebra}.

\end{proof}

Following the same strategy, we can prove the following.

\begin{proposition} \label{prop:quasi-tame-bimodules}
	The substitute for triples $(A,B,C)$ where $A$ and $B$ are non-symmetric operads and $C$ is an $A-B$-bimodule is quasi-tame.
\end{proposition}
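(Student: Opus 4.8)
The plan is to apply Theorem \ref{conjecture}, exactly as in the proofs of Propositions \ref{propositiongrnopquasitame} and \ref{propositionlmodquasitame}. I would take $(\Tt,\Aa)$ to be the substitude whose algebras are pairs $(A,B)$ of $I$-colored non-symmetric operads; this is tame (its algebra classifier is a product of two copies of the tame classifier for non-symmetric operads, each a coproduct of categories with terminal object), hence quasi-tame by Proposition \ref{proptameimpliesquasitame}. Let $(\Gg,U)$ be the substitude for triples $(A,B,C)$. The inclusion $i:(\Tt,\Aa)\hookrightarrow(\Gg,U)$ of a pair of operads as a triple with trivial bimodule realizes the Grothendieck construction for the functor sending $(A,B)$ to its category of $A-B$-bimodules, and has the retraction $r:(\Gg,U)\to(\Tt,\Aa)$ forgetting $C$. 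On colors, $\Aa$ is the disjoint union of the planar bouquets for $A$ and for $B$, while $U$ adjoins one more copy of planar bouquets indexing $C$; the composite $U\to\Aa$ factors through the fold $\Aa+\Aa\to\Aa$ via the evident inclusion on the first summand and the inclusion of the $C$-colors on the second. This gives conditions (1) and (2) of Theorem \ref{conjecture}.

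Condition (3) is the heart of the matter. I would describe $\Gg^{\Gg+\Uu}$ as planar trees built from circled $X$- and $K$-vertices together with boxed $X$- and $K$-vertices (each carrying an $A$- or $B$-flavor), and $\Tt^{\Tt+2\Aa}$ as planar trees with all vertices circled and decorated by $X$, $K$, or $L$ (again in two flavors), the induced functor turning boxed $X$-vertices into circled $X$-vertices, boxed $K$-vertices into circled $L$-vertices, and fixing circled vertices. The crucial step is to define the full subcategory $C\subset\Tt^{\Tt+2\Aa}$ of trees in which the $L$-vertices form a horizontal cut with all $A$-flavored circled vertices below it and all $B$-flavored circled vertices above it --- this is forced by the shape of a bimodule operation, whose left $A$-operations act from below and whose right $B$-operations act from above --- and $D$ its complement. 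Because no generating morphism of $\Tt^{\Tt+2\Aa}$ (contraction of an internal edge between circled $X$-vertices, or insertion of a circled unary $X$-vertex) can move a vertex across this cut, $\Tt^{\Tt+2\Aa}=C\sqcup D$, and the induced functor factors as
\[
\xymatrix{
\Gg^{\Gg+\Uu} \ar[rr] \ar[rd] && r^*\left(\Tt^{\Tt+2\Aa}\right) \\
& C \ar@{^{(}->}[ru]
}
\]
so it suffices to show $\Gg^{\Gg+\Uu}\to C$ is faithful.

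As in the previous two propositions, faithfulness would follow from exhibiting a left adjoint to $\Gg^{\Gg+\Uu}\to C$ whose unit is the identity: it turns each $L$-vertex into a boxed $K$-vertex of the matching flavor and inserts boxed unary $X$-vertices where the cut condition requires them, with counit built from the operad units and from the left and right bimodule actions, in the spirit of \ref{morphismalgebra} and \ref{morphismzerocircletozerobox}. Then $\Gg^{\Gg+\Uu}\to C\hookrightarrow C\sqcup D=r^*(\Tt^{\Tt+2\Aa})$ is faithful, Theorem \ref{conjecture} applies, and $(\Gg,U)$ is quasi-tame. The main obstacle I anticipate is getting the subcategory $C$ and this left adjoint exactly right: the sandwiching condition must be precisely strong enough to be simultaneously a coproduct summand of $\Tt^{\Tt+2\Aa}$ and the identity-unit reflective image of $\Gg^{\Gg+\Uu}$, and checking that every classifier morphism respects the cut --- and that the left adjoint exists --- forces one to track how the left $A$-action and the right $B$-action interleave in the tree combinatorics, which is bookkeeping-heavier than the one-sided case of Proposition \ref{propositionlmodquasitame} even though the structure of the argument is the same.
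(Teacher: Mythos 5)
Your proposal is structured around Theorem \ref{conjecture} with $(\Tt,\Aa)$ taken to be the substitude for \emph{pairs} of non-symmetric operads, which is a genuinely different route from the paper. The paper does not invoke Theorem \ref{conjecture} here at all: it defines a reflective subcategory $C \subset \Gg^{\Gg+\Uu}$ consisting of objects from which no non-trivial morphism arising from $B$ or the right $B$-action can be applied, uses the reflection to conclude $\Pi_1(\Gg^{\Gg+\Uu}) \simeq \Pi_1(C)$, and then observes that each connected component of $C$ is canonically isomorphic to a connected component of the left-module classifier $\Gg'^{\Gg'+\Uu'}$, so Proposition \ref{propositionlmodquasitame} finishes the argument. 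This two-step reduction --- absorb the $B$-side by a reflection, recognize what remains as the left-module classifier --- is both shorter and sidesteps the bookkeeping you flag as the main obstacle.

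The gap in your proposal is in condition (3). With $\Tt$ the monad for pairs of operads, the operations of $\Tt$ with target an $A$-color are $A$-trees and with target a $B$-color are $B$-trees; there is no operation that mixes the two flavors in a single tree. Consequently the objects of $\Tt^{\Tt+2\Aa}$ are single-flavor trees, and the subcategory $C$ you describe --- trees with $L$-vertices forming a horizontal cut, $A$-flavored vertices below, $B$-flavored above --- simply does not exist inside $\Tt^{\Tt+2\Aa}$. Worse, the induced functor $\Gg^{\Gg+\Uu} \to r^*\bigl(\Tt^{\Tt+2\Aa}\bigr)$ cannot be faithful: the fiber of $r^*\bigl(\Tt^{\Tt+2\Aa}\bigr)$ over a bimodule color $c$ is the fiber of $\Tt^{\Tt+2\Aa}$ over $r(c)$, which is a single-flavor tree, so the functor must forget either the $A$-side or the $B$-side of each bimodule tree, and every contraction or module action acting purely on the forgotten side gets sent to an identity morphism. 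So condition (3) of Theorem \ref{conjecture} fails for this choice of base, and the theorem does not apply. The paper's reduction to Proposition \ref{propositionlmodquasitame} is the right move precisely because the bimodule classifier, unlike the classifier for the pair monad, keeps both sides in one tree.
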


\begin{proof}
	Let $(G,U)$ be the substitude for such triples. We want to prove that $\Pi_1( \Gg^{\Gg + \Uu})$ is equivalent to a discrete groupoid. Let $C$ be the full subcategory of $\Gg^{\Gg + \Uu}$ of objects from which there can be no non-trivial morphisms coming from the operad $B$ or the right $B$-module action. There is an inclusion functor from $C$ to $\Gg^{\Gg + \Uu}$ and this inclusion functor has a left adjoint which automatically applies all the morphisms coming from the operad $B$ or the right $B$-module action.
	
	Now let $(G',U')$ be the substitude for pairs $(O,L)$ where $O$ is a non-symmetric operad and $L$ is a left module. Each connected component of this category $C$ is canonically isomorphic to a connected component of $\Gg'^{\Gg' + \Uu'}$, since the morphisms in $C$ are only the morphisms from the operad $A$ and the left $A$-module action. We get the conclusion from Proposition \ref{propositionlmodquasitame}.
\end{proof}

We will explore consequences of these results in the next section. We turn now to the case of symmetric operads. Our techniques are well-suited to the study of constant-free symmetric operads and their modules, a setting that has been applied in work related to the bar construction for strongly dualizable $n$-categories, among other places \cite{batanin-markl}.

\begin{proposition} \label{prop:constant-free-qt}
The substitude for pairs $(O,L)$, where $O$ is an $I$-colored constant-free symmetric operad and $L$ is a constant-free left $O$-module, is quasi-tame.
\end{proposition}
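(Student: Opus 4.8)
The plan is to apply Theorem \ref{conjecture} to the morphism of $\Sigma$-free substitudes $i: (T,A) \hookrightarrow (G,U)$, where $(T,A)$ is the substitude for $I$-colored constant-free symmetric operads and $(G,U)$ is the substitude for pairs $(O,L)$ with $O$ a constant-free symmetric operad and $L$ a constant-free left $O$-module. First I would verify that $(T,A)$ is quasi-tame: constant-free symmetric operads are encoded by a tame polynomial monad (this follows from the analysis in \cite[Section 9.2]{batanin-berger}, since constant-freeness removes the problematic nullary operations, and the associated classifier $T^{T+1}$ is a coproduct of categories with terminal object), so Proposition \ref{proptameimpliesquasitame} gives quasi-tameness. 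Then I must check that $i$ realizes a Grothendieck construction for the functor $F: B\to\Cat$ sending a constant-free symmetric operad $O$ to its category of constant-free left modules; the projection $\int F\to B$ is the restriction $i^*$, and the retraction $r:(G,U)\to(T,A)$ sends a pair $(O,L)$ to $O$ (equivalently, on the polynomial level, it collapses the module-colors back onto the operad colors, exactly as in the constructions of Section \ref{sec:Gr(T)}).

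Next I would check hypothesis (2) of Theorem \ref{conjecture}: the color category $U$ is a disjoint union of the colors $A$ (bouquets for the operad part) and a second copy encoding the module colors, and the map $U\to A$ induced by $r$ factors through $A+A$ via the folding morphism $\nabla$. This is the constant-free analogue of the discussion in Section \ref{subsec:Gr(Op,Mod)}, and it holds because constant-freeness forces the module colors to be genuine bouquets (with at least one input, in the relevant sense), so they embed cleanly into a second copy of the operad colors. The remaining and most delicate point is hypothesis (3): I need the functor $\Gg^{\Gg+\Uu}\to r^*(\Tt^{\Tt+2\Aa})$ induced by the commutative square of polynomial monads to be faithful. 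Following the proof of Proposition \ref{propositionlmodquasitame}, I would describe $\Gg^{\Gg+\Uu}$ explicitly as a category of (constant-free) trees with circled $X$- and $K$-vertices together with boxed unary vertices on top encoding the module structure, and $\Tt^{\Tt+2\Aa}$ as trees with circled $X$-, $K$-, and $L$-vertices. The induced functor converts boxed $X$-vertices to circled $X$-vertices and boxed $K$-vertices to circled $L$-vertices, and I would exhibit the factorization $\Gg^{\Gg+\Uu}\to C\hookrightarrow r^*(\Tt^{\Tt+2\Aa})$, where $C$ is the full subcategory of trees whose $L$-vertices are all on top; the first functor has a left adjoint (turning $L$-vertices into boxed $K$-vertices and inserting boxed unary $X$-vertices above top $X$-vertices, with unit from the operad units and counit from the module action), hence is final/faithful, and the second is the inclusion of a summand in a coproduct decomposition $\Tt^{\Tt+2\Aa}=C\sqcup D$, hence faithful.

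The main obstacle I anticipate is the bookkeeping needed to pin down exactly which trees occur in the constant-free setting and to confirm that the coproduct decomposition $\Tt^{\Tt+2\Aa}=C\sqcup D$ survives the constant-free restriction — in particular that there are no morphisms in the classifier between a tree with an $L$-vertex possessing inputs and a tree without one, which is what makes the inclusion $C\hookrightarrow C\sqcup D$ faithful rather than merely full. Constant-freeness should only help here, since $P(0)=\emptyset$ eliminates precisely the nullary operations that would otherwise produce extra identifications; but one must check that the left-adjoint construction (adding boxed unary $X$-vertices on top) is still well-defined when there are no nullary operations to absorb. Once these structural facts are in place, faithfulness of the composite follows as in Proposition \ref{propositionlmodquasitame}, and Theorem \ref{conjecture} yields that $(G,U)$ is quasi-tame.

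\begin{proof}
Let $(G,U)$ be the substitude for pairs $(O,L)$, where $O$ is an $I$-colored constant-free symmetric operad and $L$ is a constant-free left $O$-module, and let $(T,A)$ be the substitude for $I$-colored constant-free symmetric operads. We verify the hypotheses of Theorem \ref{conjecture} for the canonical inclusion $i: (T,A) \hookrightarrow (G,U)$.

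First, $(T,A)$ is tame: the classifier $T^{T+1}$ for constant-free symmetric operads is a coproduct of categories with terminal object, since imposing $O(0)=\emptyset$ removes exactly the nullary operations responsible for the failure of tameness in the non-reduced case (compare \cite[Section 9.2]{batanin-berger}). By Proposition \ref{proptameimpliesquasitame}, $(T,A)$ is quasi-tame.

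The morphism $i$ realizes the Grothendieck construction for the functor $F: B \to \Cat$ sending a constant-free symmetric operad $O$ to its category of constant-free left $O$-modules: we have $\int F = \Alg_{(G,U)}$, $B = \Alg_{(T,A)}$, the projection $\int F \to B$ is the restriction $i^*$, and the retraction $r: (G,U) \to (T,A)$ is $(O,L)\mapsto O$, collapsing the module colors onto the operad colors as in Section \ref{subsec:Gr(Op,Mod)}. As there, the color categories $A$ and $U$ are discrete, $U$ decomposes into the operad-colors (bouquets) together with a second copy encoding the module-colors, and constant-freeness forces each module-color to embed as a genuine bouquet, so the induced map $U \to A$ factors through the folding morphism $\nabla: A+A \to A$. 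This gives hypotheses (1) and (2).

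For hypothesis (3), we argue exactly as in the proof of Proposition \ref{propositionlmodquasitame}. The category $\Gg^{\Gg+\Uu}$ has as objects (constant-free) trees with circled $X$- and $K$-vertices, together with boxed unary vertices on top encoding the module structure; the category $\Tt^{\Tt+2\Aa}$ has as objects trees with circled $X$-, $K$-, and $L$-vertices. The induced functor $\Gg^{\Gg+\Uu} \to r^*(\Tt^{\Tt+2\Aa})$ sends boxed $X$-vertices to circled $X$-vertices and boxed $K$-vertices to circled $L$-vertices. Let $C$ be the full subcategory of $\Tt^{\Tt+2\Aa}$ of trees whose $L$-vertices are all on top. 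Then the induced functor factors as
\[
\xymatrix{
\Gg^{\Gg+\Uu} \ar[rr] \ar[rd] && r^*\left(\Tt^{\Tt+2\Aa}\right) \\
& C \ar@{^{(}->}[ru]
}
\]
The first functor has a left adjoint, which turns $L$-vertices into boxed $K$-vertices and inserts a boxed unary $X$-vertex above every top $X$-vertex; the unit is given by the operad units (insertion of circled unary $X$-vertices) and the counit by the left module action, as in (\ref{morphismalgebra}). Hence the first functor is faithful. Let $D$ be the full subcategory of $\Tt^{\Tt+2\Aa}$ of trees containing at least one $L$-vertex with at least one input. Constant-freeness ensures there are no morphisms in the classifier between an object of $C$ and an object of $D$, so $\Tt^{\Tt+2\Aa} = C \sqcup D$ and the inclusion $C \hookrightarrow \Tt^{\Tt+2\Aa}$ is faithful. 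Therefore the composite $\Gg^{\Gg+\Uu} \to r^*(\Tt^{\Tt+2\Aa})$ is faithful, verifying hypothesis (3).

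By Theorem \ref{conjecture}, $(G,U)$ is quasi-tame.
\end{proof}
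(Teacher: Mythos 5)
Your proposal is correct and takes essentially the same route as the paper: the paper's proof simply states that the argument is completely analogous to Proposition \ref{propositionlmodquasitame} (the non-symmetric left-module case), relying on the tameness of the constant-free symmetric operad substitude, and you have spelled out precisely that analogous argument via Theorem \ref{conjecture} with the same subcategory $C$ of trees whose $L$-vertices are all on top. One small discrepancy: the paper cites \cite[Section 9.4]{batanin-berger} (not Section 9.2) for tameness of constant-free symmetric operads, so you should adjust that reference.
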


\begin{proof}
	The proof is completely analogous to the proof of Proposition \ref{propositionlmodquasitame} and is based on the fact that the substitude for $I$-colored constant-free symmetric operads is tame \cite[Section 9.4]{batanin-berger}, therefore also quasi-tame.
\end{proof}

Recall \cite{batanin-berger} that an $n$-operad in an operad in the operadic category of $n$-ordinals. An $n$-operad $A$ is \emph{constant-free} if $A_0 = \varnothing$ where $0$ is the initial $n$-ordinal.

\begin{proposition} \label{prop:cf-quasi-tame}
	The substitude for pairs $(O,L)$, where $O$ is a constant-free $n$-operad and $L$ is a constant-free left $O$-module, is quasi-tame.
\end{proposition}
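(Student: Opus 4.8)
The plan is to apply Theorem \ref{conjecture}, following verbatim the strategy used for Propositions \ref{propositionlmodquasitame} and \ref{prop:constant-free-qt}. Let $(T,A)$ be the substitude for constant-free $n$-operads and let $(G,U)$ be the substitude for pairs $(O,L)$ with $O$ a constant-free $n$-operad and $L$ a constant-free left $O$-module. The morphism $i\colon (T,A)\hookrightarrow (G,U)$ realizes the Grothendieck construction for the functor $F$ sending a constant-free $n$-operad to its category of constant-free left modules, the projection $\int F\to B$ being the restriction $i^*$, and $i$ has the evident retraction $r\colon (G,U)\to (T,A)$ taking a pair to its underlying operad. First I would record that $(T,A)$ is quasi-tame: the substitude for constant-free $n$-operads is tame \cite{batanin-berger} (the $n$-operadic counterpart of the constant-free symmetric case used in Proposition \ref{prop:constant-free-qt}), hence quasi-tame by Proposition \ref{proptameimpliesquasitame}. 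This verifies hypothesis (1).

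For hypothesis (2), the color category $A$ is the discrete set of colors of constant-free $n$-operads, and $U$ is obtained from $A$ by adjoining one ``module color'' for each operad color. The map $U\to A$ induced by $r$ is the identity on operad colors and sends each module color to the corresponding operad color, so it factors through $A+A\xrightarrow{\;\nabla\;}A$ by using the first copy of $A$ for operad colors and the second for module colors. Thus (2) holds.

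The crux will be hypothesis (3): faithfulness of the induced functor $\Gg^{\Gg+\Uu}\to r^*\bigl(\Tt^{\Tt+2\Aa}\bigr)$. As in Proposition \ref{propositionlmodquasitame}, I would describe $\Gg^{\Gg+\Uu}$ combinatorially: its objects are the $n$-operadic ``trees'' (pasting diagrams in the operadic category of $n$-ordinals) whose vertices are decorated by $X$ or $K$, with the $K$-vertices carrying the module structure constrained to sit on top and to have no genuine inputs (reflecting constant-freeness of $L$), and whose morphisms are the $n$-operadic compositions together with the left $O$-module actions. The induced functor turns boxed $X$-vertices into circled $X$-vertices and boxed $K$-vertices into $L$-vertices of $\Tt^{\Tt+2\Aa}$, fixing everything else. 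Let $C\subset\Tt^{\Tt+2\Aa}$ be the full subcategory of decorated trees whose $L$-vertices are all on top; then the functor factors as in \ref{factorisation} through $C$. The first map has a left adjoint — turn $L$-vertices into boxed $K$-vertices and insert a boxed unary $X$-vertex above each top $X$-vertex — with unit built from the operad units and counit from the left-module actions, exactly as in Proposition \ref{propositionlmodquasitame}. The decorated trees containing an $L$-vertex with a nontrivial input form a full subcategory $D$, and since there are no morphisms between $C$ and $D$ we have $\Tt^{\Tt+2\Aa}=C\sqcup D$; hence $C\hookrightarrow \Tt^{\Tt+2\Aa}$ is faithful, so the composite is faithful and Theorem \ref{conjecture} gives quasi-tameness of $(G,U)$.

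I expect the main obstacle to be bookkeeping rather than structure: making precise what ``trees,'' ``boxed vertices,'' and ``on top'' mean in the operadic category of $n$-ordinals (as opposed to planar trees), and checking carefully that the left $O$-module action is encoded exactly by the contraction-type generators in $\Gg^{\Gg+\Uu}$, so that the description of the left adjoint and of the decomposition $\Tt^{\Tt+2\Aa}=C\sqcup D$ is literally correct. Once these translations are in place, the argument is identical to the non-symmetric and constant-free symmetric cases already treated.
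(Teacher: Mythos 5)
Your proposal matches the paper's argument: invoke tameness of the polynomial monad for constant-free $n$-operads (\cite[Theorem 12.28]{batanin-berger}), hence quasi-tameness, then apply Theorem \ref{conjecture} via the factorization through a subcategory $C$ of trees whose module-vertices are on top, with the left adjoint built from operad units and the left-module action, exactly as in Proposition \ref{propositionlmodquasitame}. One small slip worth flagging: you say the module-vertices are ``constrained to sit on top and to have no genuine inputs (reflecting constant-freeness of $L$),'' but constant-freeness forbids arity-zero operations, so these vertices must in fact have at least one input---the constraint is the opposite of what you wrote.
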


\begin{proof}
	Recall \cite[Proposition 12.17]{batanin-berger} that the polynomial monad for constant-free $n$-operads is generated by the polynomial
	\[
		\xymatrix{
			nROrd & nPTr^*_{reg} \ar[r] \ar[l] & nPTr_{reg} \ar[r] & nROrd
		}
	\]
	where $nROrd$ is the set of isomorphism classes of regular, that is different from initial, $n$-ordinals. The set $nPTr_{reg}$ is the set of isomorphism classes of regular $n$-planar trees, that is the $n$-ordinals decorating each vertex is regular. As usual, $nPTr^*_{reg}$ is the set of trees of $nPTr_{reg}$ with one vertex marked. This polynomial monad is tame \cite[Theorem 12.28]{batanin-berger}. Again, the proof is very similar to the proof of Proposition \ref{propositiongrnopquasitame}. This time the substitudes $(G,U)$ and $(T,A)$ will be the substitudes for pair $(O,L)$, where $O$ is a constant-free $n$-operad and $L$ is a constant-free left $O$-module, and for constant-free $n$-operads respectively. We will have a triangle \ref{factorisation}, where the first morphism of the factorization has a left adjoint. The unit of the adjunction is given thanks to the unit of the operad and the counit is given thanks to the left module action.
\end{proof}

We conclude with an example of a non-quasi-tame polynomial monad.

\begin{example}
Consider the monad $\cat P = Gr(SOp(J))$ for the Grothendieck construction of (any flavor of) symmetric operads, and their algebras. This monad is not quasi-tame. If it were, then taking $\M = Ch(\mathbb{F}_2)$, we would obtain a full model structure on $\int \Phi$, which would imply a full model structure on $\Phi(O)$ for any $O$. In particular, there would be a vertical model structure on commutative differential graded algebras, contradicting \cite[Section 5.1]{white-commutative-monoids}. 

We can also see directly that this monad is not quasi-tame, analogously to \cite[Section 9]{batanin-berger}. In the internal algebra classifier for $Gr(\cat P)$, there will be objects with, for example, nontrivial $\Sigma_2$-automorphisms, as already mentioned in Subsection \ref{subsec:qt-examples}.
Hence $\Pi_1(\cat P^{\cat P+\cat A})$ cannot be equivalent to a discrete groupoid.
\end{example}

\section{Applications} \label{sec:applications}

In this section, we provide a few applications of the previous section.

\subsection{Operads}

Our motivating example takes $\cat B$ to be a category of operads (any flavor given by a $\Sigma$-free substitude \cite{Reedy-paper}) and $\Phi(O)$ to be the category of $O$-algebras or left $O$-modules. In \cite[Theorem 3.9]{companion}, we prove that, if $\int \Phi$ admits the global model structure, then the categories $\cat B$ (e.g., of monoids or operads) and $\Phi(O)$ (of algebras and modules) admit horizontal and vertical model structures, that are furthermore (relatively) left proper if the ambient model category $\M$ satisfies the conditions of Theorem \ref{thm:quasi-tame-admissible}. Furthermore, we prove a rectification result, regarding when a weak equivalence $\phi: O\to O'$ induces a Quillen equivalence between $\Phi(O)$ and $\Phi(O')$. The following is an immediate consequence of the work of the previous section.

\begin{theorem} \label{thm:global-model-examples}
Assume $\M$ is a compactly generated monoidal model category satisfying the monoid axiom. Then the following global model structures exist:
\begin{enumerate}
\item The category of pairs $(R,M)$ where $R$ is a monoid (or, more generally, an $A$-algebra for a commutative monoid $A$) and $M$ is an $R$-module (left, right, or bimodule).
\item The category of pairs $(O,A)$ where $O$ is a non-symmetric operad and $A$ is an $O$-algebra.
\item The category of pairs $(O,M)$ where $O$ is a non-symmetric operad and $M$ is a left $O$-module.
\item The category of pairs $((O,P),M)$ where $O$ and $P$ are non-symmetric operads, and $M$ is an $O-P$-bimodule.
\item The category of pairs $(O,M)$ where $O$ is a constant-free symmetric operad (meaning $P(0)=\emptyset$ is the initial object of $\M$) and $M$ is a constant-free left $O$-module.
\item The category of pairs $(O,M)$ where $O$ is a constant-free $n$-operad and $M$ is a constant-free left $O$-module.
\end{enumerate}

If $\M$ is $h$-monoidal then these model structures are relatively left proper. If $\M$ is strongly $h$-monoidal then they are left proper.
\end{theorem}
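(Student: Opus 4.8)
The plan is to reduce the entire theorem, case by case, to Theorem \ref{thm:quasi-tame-admissible}, using the quasi-tameness results established in Subsection \ref{subsec:qt-examples} together with Proposition \ref{prop:poly-for-Gr} (or its substitude analogue). Concretely, for each of the six families of pairs listed we first identify the relevant Grothendieck construction $\int\Phi$ with the category $\Alg_{(P,A)}(\M)$ of algebras over a $\Sigma$-free substitude $(P,A)$: case (1) uses that $Gr(Mon)$ is tame by Proposition \ref{prop:Gr(Mon)-tame}, hence quasi-tame by Proposition \ref{proptameimpliesquasitame}, together with the analogous statements for right modules and bimodules recorded in the remark following that proposition; cases (2), (3), (4) use Propositions \ref{propositiongrnopquasitame}, \ref{propositionlmodquasitame}, and \ref{prop:quasi-tame-bimodules} respectively; and cases (5), (6) use Propositions \ref{prop:constant-free-qt} and \ref{prop:cf-quasi-tame}. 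In every case the color category $A$ is discrete, so $[A,\M]$ is simply a product of copies of $\M$.

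The next step is to translate the hypotheses on $\M$ into the pointwise hypotheses of Theorem \ref{thm:quasi-tame-admissible}. Since $A$ is a discrete set of colors, $[A,\M]$ carries the product model structure and the pointwise tensor $\otimes_p$ is computed coordinatewise; hence $\M$ compactly generated implies $[A,\M]$ is pointwise compactly generated, the monoid axiom for $\M$ implies the pointwise monoid axiom, $h$-monoidality implies pointwise $h$-monoidality, and strong $h$-monoidality implies strong pointwise $h$-monoidality, all by checking coordinatewise. Applying Theorem \ref{thm:quasi-tame-admissible}(1) then produces a transferred model structure on $\Alg_{(P,A)}(\M)$ along $\eta^{\ast}$; parts (2) and (3) of that theorem give relative left properness under $h$-monoidality and left properness under strong $h$-monoidality. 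Because the weak equivalences and fibrations of the transferred structure are created by the forgetful functor to $[A,\M]$, which records exactly the underlying collections, this transferred model structure is precisely the global model structure defined in Section \ref{sec:Gr(T)}, and the theorem follows in all six cases.

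The one case requiring a small additional observation is the parenthetical in (1), where $R$ is an $A$-algebra for a commutative monoid $A$ rather than a monoid. Here we replace the ambient model category $\M$ by the category of $A$-modules, which by \cite[Theorem 4.1]{SS00} is again a monoidal model category satisfying the pushout-product and monoid axioms (and inherits compact generation, as well as $h$-monoidality when $\M$ has it); we then run the argument above for $Gr(Mon)$ over this new base, using that $A$-algebras are exactly monoids in $A$-modules and that $R$-modules are the corresponding module objects.

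I expect the only genuinely delicate point to be bookkeeping: verifying that in each case the substitude supplied by the cited proposition really does have $\int\Phi$ as its algebra category with the projection $\int\Phi\to\cat{B}$ matching the appropriate restriction functor, and that the ``constant-free'' qualifiers in cases (5) and (6) are carried through consistently (so that one remains inside the tame base substitude whose quasi-tameness is inherited). Everything else is a direct application of the machinery already assembled, and the further consequences (horizontal and vertical model structures, rectification) are deferred to \cite{companion}.
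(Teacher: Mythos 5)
Your proposal is correct and takes essentially the same route as the paper, which simply cites Theorem \ref{thm:quasi-tame-admissible} together with Propositions \ref{prop:Gr(Mon)-tame}, \ref{propositiongrnopquasitame}, \ref{propositionlmodquasitame}, \ref{prop:quasi-tame-bimodules}, \ref{prop:constant-free-qt}, and \ref{prop:cf-quasi-tame}. You helpfully spell out the (routine) verification that the hypotheses on $\M$ pass to the pointwise hypotheses on $[A,\M]$ for discrete $A$, and the base-change to $A$-modules for the parenthetical in item (1), both of which the paper leaves implicit.
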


\begin{proof}
This follows from Theorem \ref{thm:quasi-tame-admissible} and Propositions \ref{prop:Gr(Mon)-tame}, \ref{propositiongrnopquasitame}, \ref{propositionlmodquasitame}, \ref{prop:quasi-tame-bimodules}, \ref{prop:constant-free-qt}, and \ref{prop:cf-quasi-tame}.
\end{proof}

Thanks to \cite[Theorem 3.9]{companion} we obtain model structures on the category of non-symmetric operads and on categories of algebras and modules over them, and a rectification result: if $P$ and $O$ are non-symmetric operads such that $P(n)$ and $O(n)$ are cofibrant in $\M$ for all $n$, then a weak equivalence $f: P \to O$ induces a Quillen equivalence on algebras. These results recover results from \cite{muro}, and extend them to the case of many-colored non-symmetric operads.

If $\M$ is compactly generated and satisfies the monoid axiom (resp. strongly $h$-monoidal), then the global model structure is relatively left proper, relative to entrywise cofibrant non-symmetric operads (resp. left proper), by Theorem \ref{thm:quasi-tame-admissible}. In \cite[Theorem 3.18]{companion}, we prove that this implies the category of non-symmetric operads and the categories of $O$-algebras or left $O$-modules, are relatively left proper (resp. left proper). This improves on \cite[Corollary 8.4]{muro}, which required all objects in $\M$ to be cofibrant, in order to deduce left properness.

Another application is to the theory of bimodules, which was important in recent work of Turchin and Dwyer-Hess on the space of long knots \cite{turchin,dwyerhess}. The category of $(P,Q)$-bimodules, where $P$ and $Q$ are non-symmetric operads, can itself be viewed as a category of algebras over a colored non-symmetric operad. Our work provides the first left proper model structure for the category of $(P,Q)$-bimodules over a general base model category $\M$ (satisfying our usual hypotheses). Of course, it is also possible to study operadic bimodules globally via the Grothendieck construction, since Proposition \ref{prop:quasi-tame-bimodules} implies the global model structure exists. The global perspective provides change-of-operad results via the rectification machinery of \cite[Theorem 3.22]{companion}.

We explore the case of symmetric operads in \cite[Section 4]{companion}, where we prove, for example, that the category of $(P,Q)$-bimodules has a transferred semi-model structure under cofibrancy conditions on $P$ and $Q$, proving an analogue of \cite[Theorem 16.2.A]{fresse-book}. 
For topological symmetric operads this can be upgraded to a full model structure, since all colored operads in topological spaces are admissible.

Our last example of an important category of algebras over a colored non-symmetric operad is the category of infinitessimal $O$-bimodules where $O$ is a non-symmetric operad. Hence, we obtain a model structure on this category, useful to \cite{deleger}.

\subsection{Opetopic sequences}

Recall that every polynomial monad $T$ has an associated monad $T^+$, defined via the Baez-Dolan plus construction \cite[Section 11]{batanin-berger}. For example, if $T = NOp$ then $Alg_{T^+}$ consists of hyperoperads. In Section \ref{subsectionhigherextensions} we prove that there is a polynomial monad $Gr(T^+)$ for the category of pairs $(O,A)$ where $O$ is a $T^+$-algebra and $A$ is an $O$-algebra.

As a consequence, we may study the following \textit{opetopic sequence}:
\[
Id, Mon, NOp, NOp^+, \dots
\]

An \textit{opetopic fibration} is one where the base comes from the sequence above. We next discuss quasi-tameness, and hence transferred model structures, associated with an opetopic fibration. For example, with $NOp^+$, we obtain a model structure on pairs $(O',O)$ where $O$ is a non-symmetric operad. 

\begin{proposition}\label{prop:GrT+ is qt}
	If $P$ is a polynomial monad in the opetopic sequence, then $Gr(P)$ is quasi-tame.
\end{proposition}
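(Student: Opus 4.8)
The plan is to argue by induction along the opetopic sequence, applying Theorem~\ref{conjecture} at every stage. The first terms are already handled: $Gr(Id)$ and $Gr(Mon)$ are tame by Proposition~\ref{prop:Gr(Mon)-tame} and a direct check, and $Gr(NOp(I))$ is quasi-tame by Proposition~\ref{propositiongrnopquasitame}. For a general term $P=T^{+}$ of the sequence, with $T$ the preceding term, I would take $(G,U)=Gr(T^{+})$ and let $(T,A)$ be $T^{+}$ viewed as the $\Sigma$-free substitude for $T^{+}$-algebras, and then verify the three hypotheses of Theorem~\ref{conjecture} for the canonical inclusion $i\colon T^{+}\hookrightarrow Gr(T^{+})$ together with its retraction $r$.

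Hypothesis~(1) requires $T^{+}$ to be quasi-tame. I would get this from a secondary induction showing that every term of the opetopic sequence is in fact tame: $Id$ is tame, and the plus construction of each of these (purely planar, hence symmetry-free) monads is again tame, which one reads off from the description of the classifier $S^{S+1}$ for $S=T^{+}$ in Section~\ref{subsectionhigherextensions} --- its objects are $B$-decorated trees, its generating morphisms contract $X$--$X$ edges and insert unary $X$-vertices, and every connected component has the fully contracted tree as a terminal object, exactly as in the non-symmetric operad case. Then Proposition~\ref{proptameimpliesquasitame} upgrades this to quasi-tameness. Hypothesis~(2) is immediate from the shape of the $Gr$-polynomial produced in Proposition~\ref{prop:poly-for-Gr}: the colour category of $Gr(T^{+})$ is the disjoint union of $B(T^{+})$ with the colours of the operad for $T^{+}$-algebras, and $r$ carries the extra colours to the nullary corollas, so the induced map on colours factors through the folding map $A+A\to A$, precisely as in Proposition~\ref{propositiongrnopquasitame}.

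Hypothesis~(3) is the substantive point, and I would prove it by transcribing the proof of Proposition~\ref{propositiongrnopquasitame} to this setting. The internal-algebra classifier $Gr(T^{+})^{Gr(T^{+})+U}$ has as objects ``trees of $T$-trees'' whose vertices carry $X$- or $K$-labels, boxed when they are valency-one leaves; the functor induced by the commuting square of monads turns boxed $X$-vertices into circled $X$-vertices and boxed $K$-vertices into circled $L$-vertices while leaving the circled vertices alone, and it factors as $Gr(T^{+})^{Gr(T^{+})+U}\to C\hookrightarrow r^{*}\bigl((T^{+})^{(T^{+})+2A}\bigr)$, where $C$ is the full subcategory of trees whose $L$-vertices have no inputs. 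Since $(T^{+})^{(T^{+})+2A}=C\sqcup D$ with $D$ the trees containing an $L$-vertex with an input, the second functor is faithful; and the first functor has a left adjoint (replacing $L$-vertices by boxed $K$-vertices) whose counit is the evident analogue of morphism~(\ref{morphismzerocircletozerobox}), so it is final and in particular faithful. The composite is then the faithful functor~(\ref{equivalencegroupoids}) demanded by Theorem~\ref{conjecture}, which concludes.

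The main obstacle I expect is exactly this last verification: one must isolate which features of the concrete $NOp$-picture (trees, boxing rules, reduction rules, the adjoint-functor bookkeeping of Proposition~\ref{propositiongrnopquasitame}) are generic and which are special to $NOp$, and record a uniform combinatorial description of the classifier of $Gr(T^{+})$ valid for every opetopic $T^{+}$. A secondary point needing care, unless it is already stated at the required generality in \cite{batanin-berger}, is the claim that every term of the opetopic sequence is tame; this should follow from the $S^{S+1}$ description recalled above, but it warrants its own (routine) induction.
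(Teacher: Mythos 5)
Your proposal takes essentially the same route as the paper: after dispatching the first two terms, apply Theorem~\ref{conjecture} to $i\colon T^{+}\hookrightarrow Gr(T^{+})$, get hypothesis~(1) from tameness of the plus construction, and verify hypothesis~(3) by transcribing the factorization-through-$C$ argument of Proposition~\ref{propositiongrnopquasitame} to the $B$-decorated trees describing the $T^{+}$ classifiers. The only minor divergence is that you propose a separate induction to show each term of the opetopic sequence is tame, whereas the paper simply relies on the general fact (already invoked elsewhere, from \cite{batanin-berger}) that $S^{+}$ is tame for \emph{every} polynomial monad $S$, so your secondary induction is correct but unnecessary.
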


\begin{proof}
	If $P=Id$ or $P=Mon$, the result is given by Subsection \ref{subsec:monoids-modules}. Let us now write $P^{++} = (P^+)^+$ and prove that $Gr(P^{++})$ is quasi-tame. Recall from \cite[Subsection 3.4]{batanin-kock-joyal} that the polynomial for $P^{++}$ is given by
	\[
		\xymatrix{
			tr(B) & tr^*(P^+) \ar[l] \ar[r] & tr(P^+) \ar[r] & tr(B)
		}
	\]
	where $tr(B)$ is the set of isomorphism classes of rooted trees decorated by elements of the set of operations $B$ of $P$, $tr(P^+)$ is the set of isomorphism classes of rooted trees whose vertices are decorated by elements of $tr(B)$ and edges are decorated by elements of $B$. The set of colors of $Gr(P^{++})$ is $tr(B) \sqcup B$. Its set of operations is the set of isomorphism classes of rooted trees whose vertices are decorated by elements of $tr(B)$ and edges are decorated by elements of $B$, and some vertices can be boxed if they are decorated with the trivial tree without any edges. As before we can apply Theorem \ref{conjecture}, taking $(T,A)$ to be the substitute for $P^{++}$-algebras and $(G,U)$ to be the substitute for $Gr(P^{++})$-algebras. The induced functor \ref{equivalencegroupoids} turns all the boxed $X$-vertices to circled $X$-vertices and turns boxed $K$-vertices to circled $L$-vertices. The rest of the proof is similar to the proof of Proposition \ref{propositiongrnopquasitame}.
\end{proof}

We envision several future applications of this result. For example, we hope this result can be used to prove a generalization of the Turchin/Dwyer-Hess double-delooping theorem \cite{turchin,dwyerhess}, where we replace the category of non-symmetric operads by the category of algebras for a polynomial monad in the opetopic sequence.

\subsection{Homotopy T-algebras}

The global perspective provides another point of view for the study of homotopy $T$-algebras for a given polynomial monad $T$. Recall \cite[Section 11]{batanin-berger} that if $T$ is a polynomial monad then the category of \textit{homotopy $T$-algebras} is constructed as $Q\tau$-alg where $Q\tau$ is the cofibrant replacement of the terminal $T^+$-algebra $\tau$. We know that $Alg_\tau \cong Alg_T$ has a transferred semi-model structure under very general conditions because $T$ is $\Sigma$-cofibrant as a $J$-colored operad \cite[Theorem 6.3.1]{white-yau}. We can take the cofibrant replacement $Q\tau$ for $\tau$ in the model category $Alg_{T^+}$ (using that $T^+$ is tame, for any polynomial monad $T$). Algebras over this $Q\tau$ are defined to be $Alg_{\phi_!(Q\tau)}$. Since $\phi_!$ is a left Quillen functor, and $Q\tau$ is cofibrant, $\phi_!(Q\tau)$ is cofibrant in $SOp(Bq(J))$. Thus, $Alg_{\phi_!(Q\tau))}$ has a transferred semi-model structure, claimed to be a full model structure in Theorem 4 of \cite{spitzweck-thesis} if the ambient model category $\M$ satisfies the monoid axiom. This provides a form of an answer to the Batanin-Berger Conjecture \cite[Section 11]{batanin-berger}, and a powerful tool to study homotopy $T$-algebras.

Another consequence of Proposition \ref{prop:GrT+ is qt} involves rectification, as \cite[Theorem 3.22]{companion} yields a Quillen equivalence between $Alg_{Q\tau}$ and  $Alg_{\tau}= Alg_T.$ In particular any homotopy $T$-algebra is equivalent to a strict $T$-algebra. We emphasize that $T$ is a polynomial monad (hence $\Sigma$-cofibrant).

\subsection{Twisted modular operads}

In this section, we summarize an application of the global model structure proven in \cite[Section 5]{companion}. The theory of twisted modular operads dates back to \cite{getzler-kapranov}. The setting is the projective model structure $Ch(k)$ on $\mathbb{Z}$-graded chain complexes of $k$-vector spaces where $k$ is a field of characteristic zero. In \cite[Section 4]{getzler-kapranov}, the authors construct a hyperoperad whose algebras are modular operads. For a fixed cocycle $D$, they define an object, which we denote $O$, such that $O$-algebras are twisted modular operads. Because there is no known Set-valued operad whose algebras are twisted modular operads, standard techniques cannot be used to transfer a model structure to the category of twisted modular operads. However, the Grothendieck construction provides an alternative approach. The following is proven in \cite[Theorem 5.1]{companion}:

\begin{theorem}
The category of twisted modular operads \cite{getzler-kapranov} admits a vertical model structure.
\end{theorem}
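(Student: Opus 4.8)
The plan is to realize the category of $D$-twisted modular operads, for the fixed Getzler--Kapranov cocycle $D$, as a single fiber of a Grothendieck construction whose total category is algebras over a polynomial monad, and then to extract the vertical model structure from \cite[Theorem 3.9]{companion}. We work throughout in $\M = Ch(k)$ with $\mathrm{char}(k) = 0$. First I would recall (Section \ref{subsectionhigherextensions}) that there is a $\Set$-valued polynomial monad $MOp^+$ --- indeed a tame one, since it is a Baez-Dolan plus construction --- whose algebras are exactly the hyperoperads of \cite{getzler-kapranov} (the plus construction of the monad $MOp$ for modular operads), that there is a canonical morphism of polynomial monads $\phi\colon MOp^+ \to SOp(J)$, and that, by definition, a $D$-twisted modular operad for a $MOp^+$-algebra $D$ is a $D$-algebra, i.e.\ an object of $Alg_{\phi_!(D)}(\M)$; the Getzler--Kapranov cocycle determines such a $D$. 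Taking $\cat{B} = Alg_{MOp^+}$ and $\Phi(P) = Alg_P$, the category of $D$-twisted modular operads is the fiber $\Phi(D)$ of $\int\Phi$, and by Proposition \ref{prop:poly-for-Gr} we have $\int\Phi \cong Alg_{Gr(MOp^+)}(\M)$, with $Gr(MOp^+)$ a $\Sigma$-free substitude. It therefore remains to produce the global model structure on $\int\Phi$.

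The subtlety is that $Gr(MOp^+)$ is \emph{not} quasi-tame: exactly as for $Gr(SOp(J))$ in Subsection \ref{subsec:qt-examples}, its internal algebra classifier contains decorated graphs carrying nontrivial finite automorphism groups, so $\Pi_1(\cop)$ is not equivalent to a discrete groupoid, and neither Theorem \ref{thm:quasi-tame-admissible} nor Theorem \ref{conjecture} applies directly. However, the filtration of Proposition \ref{filtration} holds for \emph{every} $\Sigma$-free substitude, not only the quasi-tame ones, and the characteristic-zero hypothesis is exactly what lets one push the proof of Theorem \ref{thm:quasi-tame-admissible} through it. Concretely, on each connected component $c$ of $\cop$ one still obtains a final functor onto a product of localizations, but now twisted by the finite automorphism group $\Gamma_c$ detected by $\Pi_1$, so that $\colim_{\qa^{(k)}_c}\XX \cong (\XX_x \otimes_p \XX_f)_{\Gamma_c}$ --- the $\otimes_{\Sigma_2}$ phenomenon flagged in Subsection \ref{subsec:qt-examples}. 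Since $\mathrm{char}(k) = 0$, the functor of $\Gamma_c$-coinvariants on $Ch(k)$ is exact, is a retract of the non-quotiented construction on underlying complexes, and preserves weak equivalences; hence the comparison maps $w_k\colon Q_k \to L_k$ of Proposition \ref{filtration} still lie in $\Ii^{\otimes_p}$, and in $\WW \cap \Ii^{\otimes_p}$ whenever the cofibration $f\colon K \to L$ is trivial, which is what Proposition \ref{Ktheorem}(a) requires; the same coinvariants argument applied to the cube (\ref{cube2}) gives left properness, $Ch(k)$ being strongly $h$-monoidal. As $Ch(k)$ is compactly generated, monoidal, and satisfies the monoid axiom, the transfer yields the (left proper) global model structure on $\int\Phi$.

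Finally, \cite[Theorem 3.9]{companion} turns the global model structure on $\int\Phi$ into a vertical model structure on every fiber $\Phi(P)$, and specializing $P = D$ produces the desired vertical model structure on the category of $D$-twisted modular operads. I expect the main obstacle to be the second paragraph: pinning down that the failure of quasi-tameness of $Gr(MOp^+)$ is \emph{precisely} a finite-group-coinvariants phenomenon --- that on each component the colimit computing $Q_k$ is genuinely coinvariants by the graph-automorphism group and not by something larger --- and verifying that the structure maps of the twisting hyperoperad $D$ are compatible with the identification $\colim_{\qa^{(k)}_c}\XX \cong (\XX_x \otimes_p \XX_f)_{\Gamma_c}$ that the filtration of Proposition \ref{filtration} demands. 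Everything else is essentially a transcription of the arguments of Sections \ref{sec:Gr(T)} and \ref{sec:quasi-tame}.
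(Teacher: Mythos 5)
Your overall strategy matches the paper's: realize the category of $D$-twisted modular operads as a single fiber of the Grothendieck construction $\int\Phi$ over the category of hyperoperads, use Proposition \ref{prop:poly-for-Gr} to encode $\int\Phi$ as algebras over a polynomial monad, transfer a global model structure on $\int\Phi$ using the characteristic-zero hypothesis, and then invoke \cite[Theorem 3.9]{companion} to extract the vertical model structure on the fiber. The concluding steps are exactly what the paper does.

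However, there are two points where your argument diverges from, or goes beyond, what the paper claims, and both deserve scrutiny. First, the identification of the governing polynomial monad with $MOp^+$, the Baez--Dolan plus construction of ``the monad $MOp$ for modular operads,'' is not asserted in the paper, and it is not innocuous. The paper's own justification for using the Grothendieck construction is precisely that ``there is no known Set-valued operad whose algebras are twisted modular operads''; the operations of modular operads are stable graphs, which carry nontrivial automorphism groups, and this is exactly the sort of non-freeness that can obstruct polynomiality (and hence the applicability of the Baez--Dolan plus construction as used in this paper, which is defined for polynomial monads). The paper simply posits a polynomial monad $P$ whose algebras are Getzler--Kapranov hyperoperads and refrains from claiming it is a plus construction, or tame. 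Your claim that $MOp^+$ ``is tame, since it is a Baez-Dolan plus construction'' is contingent on $MOp$ being polynomial, which you do not establish and the paper does not assume.

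Second, you route the admissibility of $Gr(P)$ through the machinery of Section \ref{sec:quasi-tame}---the canonical filtration of Proposition \ref{filtration} augmented by a finite-group-coinvariants correction, with exactness of coinvariants in characteristic zero doing the work. This is a plausible mechanism, and you honestly flag it as the likely gap in your argument. But the paper deliberately distances itself from this route: it states that ``since it does not require quasi-tameness, this result is an application of Proposition \ref{prop:poly-for-Gr} more than Section \ref{sec:quasi-tame},'' and that ``the fact that $k$ has characteristic zero tells us that $Gr(P)$ is admissible.'' In other words, the paper invokes the classical characteristic-zero admissibility of arbitrary (colored) operads/substitudes over $Ch(k)$, rather than re-deriving it from the filtration with a coinvariants twist. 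Your approach would require you to actually prove that on every connected component of the classifier the colimit is genuinely the coinvariants of $\XX_x \otimes_p \XX_f$ by the fundamental groupoid, not something larger---precisely the obstacle you name. The paper's route sidesteps that verification entirely. So your proposal is a correct-in-spirit but harder-than-necessary path to the same conclusion, and it rests on an unverified identification of the base polynomial monad.
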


The idea of the proof is to transfer a model structure to the category $\int \Phi$ whose objects are pairs $(H,O)$ where $H$ is a hyperoperad and $O$ is an algebra over $H$. Even though we don't know that the relevant polynomial monad $Gr(P)$ is quasi-tame, the fact that $k$ has characteristic zero tells us that $Gr(P)$ is admissible. Since the global model structure exists, so does each vertical model structure. Since it does not require quasi-tameness, this result is an application of Proposition \ref{prop:poly-for-Gr} more than Section \ref{sec:quasi-tame}. As far as the authors are aware, this is the first non-trivial model structure on the category of twisted modular operads.

\section{Grothendieck construction for commutative monoids} \label{sec:non-poly}

In this section, we illustrate that our techniques can also be used for non-polynomial monads, at least in certain special cases. For a cofibrantly generated monoidal model category $\M$, let $\CMon(\M)$ denote the category of commutative monoids and define a functor $\Phi$ such that for a given commutative monoid $R$, $\Phi(R)$ is the category of left $R$-modules. We first discuss transferred (semi-)model structures on $\CMon(\M)$ and then on $\int \Phi$.

\subsection{Commutative monoids}

Say that $\M$ satisfies the \textit{commutative monoid axiom} if, whenever $f$ is a trivial cofibration then $f^{\boxprod n}/\Sigma_n$, obtained from the $n$-fold iterated pushout product, is a trivial cofibration. For such $\M$, $\CMon(\M)$ has a transferred semi-model structure, which is a model structure if $\M$ also satisfies the monoid axiom \cite[Definition 3.1, Theorem 3.2, Corollary 3.8]{white-commutative-monoids}. This result is proven using a filtration that the author constructed directly, that realizes the critical pushout (\ref{diagram:pushout}) as a transfinite composition of pushouts of morphisms of the form $X\otimes f^{\boxprod n}/\Sigma_n$. Our first step is to show how this filtration is related to the theory of classifiers \cite{batanin-berger}, so that we can generalize it for the global model structure.

The free commutative monoid monad $Com$ is not polynomial. It is, however, possible to develop a theory of internal algebras and their classifiers for monads coming from arbitrary colored symmetric operads in $\Set$, following \cite{batanin-berger}. Despite the fact that such a monad is not necessary cartesian, we can apply a different construction which produces a polynomial operad in $\Cat$ whose internal algebras behave in similar way as for the cartesian case. Calculations in general are harder, since they involve a calculation of nondiscrete codescent objects (e.g., due to the non-free $\Sigma_n$-actions). However, there is a procedure for how to do such calculations \cite{Weber}.  It is well-known fact (which can be obtained also from general theory of classifiers \cite{Weber}) that the symmetric monoidal category of finite sets is a classifier for commutative monoids.

When translated to the realm of classifiers, the explicit calculations of \cite{white-commutative-monoids}, to describe the critical pushout (\ref{diagram:pushout}), enable a decomposition of the classifier $Com^{Com_{f,g}}$ in a way very analogous to the theory of quasi-tame polynomial monads. We now explicitly describe the relevant classifiers, referring the reader to \cite{Weber} for full details.

As in the polynomial case, $Com^{Com+1}$ computes semifree coproducts of commutative monoids, which are, of course, just tensor products of $X$ and $\Sym(K).$ The objects of this classifier are finite sets over two element set $\{X,K\}$, and the morphisms are any morphisms of these two-colored sets that are bijections on $K.$ It is not hard to see that this category has a final subcategory consisting of objects with exactly one element with $X$ color. This is, actually, a groupoid which is equivalent to the groupoid of symmetric groups.

The classifier $Com^{Com_{f,g}}$ can be constructed in a similar way. Its objects are finite sets with three colors $X,K,L.$ Morphisms are again morphisms of colored sets which are bijections on $K$ and $L$ elements. There are also two generators $g:K\to X$ which simply change the color of an element from $K$ to $X$ and similarly $f:K\to L.$ 

We have a final subcategory of $Com^{Com_{f,g}}$ which consists in this case of finite sets with a single $X$-colored element. This final subcategory has a filtration $\bf t^{(n)}$ by the number of elements with $K$ and $L$ colors (that is, just a cardinality of the set minus one). And so, the colimit over $Com^{Com_{f,g}}$ can be computed as a sequential colimit over colimits over $\bf t^{(n)}$ like in the non-commutative case. The difference with the non-commutative case is that colimit over $\bf t^{(n)}$ cannot be computed as a pushout of a colimit over punctured cubes. In the commutative case, the relevant cube has a nontrivial group of automorphisms equal to $\Sigma_n.$ So, we need to saturate the class of morphisms $X\otimes f^{\boxprod n}/\Sigma_n$ where $f$ runs over trivial cofibrations. We need then the commutative monoid axiom to be sure that result of such a colimit is a weak equivalence. 

\subsection{Commutative monoids and their modules}

It is possible to generalize Proposition \ref{prop:poly-for-Gr} to $\Cat$-valued polynomials. We are most interested in the case of $\int \Phi$ where $\Phi(R)$ is the category of $R$-modules for a commutative monoid $R$. 

The monad $Gr(Com)$ is determined by a symmetric operad in $\Set$ with two colors $r$ and $m.$ The corresponding polynomial monad in $\Cat$ is 
\begin{diagram}\{r,m\} &\lTo^{s}&D^*&\rTo^{p}&D&\rTo^{t}&\{r,m\}\end{diagram}    
Here, $D$ is the category whose objects are finite sets and pointed finite sets. The morphisms are bijections which preserve points (so there are no morphisms between pointed and unpointed sets).  The target of an unpointed set is $r$ and the target of a pointed set is $m.$ The category $D^*$ has objects the objects of $D$ with one point marked, and morphisms are morphisms of $D$ which preserve the marked points.  The source of such an object is $r$ if the marked point is not a distinguished point. Otherwise the source is $m.$ The substitution operation is just replacing a marked element by the set which we want to substitute in an obvious sense. 

It is not hard to see that the algebras of such a polynomial monad are pairs $(\cat R,\cat C)$ where $\cat R$ is a strict symmetric monoidal category and $\cat C$ is a category equipped with a strict action of $\cat R$ on $\cat C.$ A pseudoalgebra of this monad consists of a   symmetric  monoidal category and a category on which it acts in a pseudo sense. The internal algebras in a pseudoalgebras $(\cat R,\cat C)$ of this monad is a commutative monoid $r$ in $\cat R$ together with an object  $m\in \cat C$ and an action $r\otimes m \to m$ where $\otimes$ is the action of $\cat R$ on $\cat C$ subject natural axioms (Baez and Dolan called this action riding the action of $\cat R$ on $\cat C$). In particular, for $(\cat R,\cat C)=(\M,\M)$ such an internal algebra is simply a commutative monoid $R$ in $\M$ together with an $R$-module. 

The techniques of Section \ref{sec:Gr(T)} can now be used to produce the classifier $Gr(Com)^{Gr(Com)+1}$. Just like the cases of monoids and non-symmetric operads, it looks like the classifier $Com^{Com+1}$ but with boxes as well, corresponding to the pointed finite sets in $D$. This classifier has a final subcategory consisting of finite sets and finite pointed sets with a single $X$-colored element. The classifier $Gr(Com)^{Gr(Com)_{f,g}}$ has a similar final subcategory and, just like the case of $Com$, this final subcategory has a filtration based on the number of elements with $K$ and $L$ colors. The critical pushout (\ref{diagram:pushout}), now where $T$ is the free $Gr(Com)$-functor, may now be computed as a sequential colimit as in Proposition \ref{filtration}.
We arrive at the following result, where we use $I^{\otimes}$ to denote the monoidal saturation of the class of cofibrations, i.e., the class of transfinite compositions of pushouts of morphisms of the form $X\otimes f$ where $X$ is any object and $f$ is a cofibration.

\begin{theorem} \label{thm:Gr(Com)-model}
Suppose $\M$ is a cofibrantly generated monoidal model category satisfying the commutative monoid axiom, where the domains of the generating (trivial) cofibrations are small relative to $I^{\otimes}$. Then the Grothendieck construction $\int \Phi$, whose objects are pairs $(R,A)$ where $R$ is a commutative monoid and $A$ is an $R$-module, inherits a transferred semi-model structure which is a full model structure if $\M$ satisfies the monoid axiom.
\end{theorem}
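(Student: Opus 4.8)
The plan is to transfer the (semi-)model structure from the category of collections for the two-colored operad underlying $Gr(Com)$ --- which, since the colors are $r$ and $m$, is just $\M\times\M$ with the product model structure --- to $\int\Phi\cong\Alg_{Gr(Com)}(\M)$ along the free--forgetful adjunction, using Theorem~\ref{thm:admissible-implies-(semi)-models}. The free $Gr(Com)$-functor is finitary (it is built from symmetric powers, which preserve filtered colimits), and the smallness hypothesis on the domains of the generating (trivial) cofibrations, together with the fact that weak equivalences in a monoidal model category are stable under the relevant transfinite colimits, makes $\M\times\M$ into a $\Kk$-compactly generated model category for $\Kk = I^{\otimes}$. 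Hence everything reduces to showing that the free $Gr(Com)$-functor is $(I^{\otimes},\,I^{\otimes}\cap\WW)$-(semi-)admissible, which is an assertion about the critical pushout~(\ref{diagram:pushout}).

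To analyze that pushout I would run the classifier machinery of Section~\ref{freealgext}, but for the non-cartesian monad $Gr(Com)$, using the $\Cat$-valued polynomial presentation from Section~\ref{sec:non-poly} (the analogue of Proposition~\ref{prop:poly-for-Gr}) and the codescent computations of \cite{Weber}. This realizes the underlying collection $U(P)$ of~(\ref{diagram:pushout}) as a colimit over the classifier $Gr(Com)^{Gr(Com)_{f,g}}$ described above. That classifier has a final subcategory consisting of finite sets and finite pointed sets carrying a single $X$-colored element, filtered by $\mathbf{t}^{(n)}$ according to the total number of $K$- and $L$-colored elements, so that exactly as in Proposition~\ref{filtration} one obtains $U(P)\cong\colim_n P_n$ with each $P_{n-1}\to P_n$ a pushout of a comparison morphism $w_n$. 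The one essential difference from the polynomial (quasi-tame) case is that the relevant $n$-dimensional cube now carries a \emph{non-free} $\Sigma_n$-action: the colimit over the $n$-th layer is the $\Sigma_n$-coinvariants of the punctured-cube colimit, tensored with the surviving $X$ (and, in the module direction, with the single module object, which enters linearly and so contributes no extra symmetry). Concretely, $w_n$ takes the form $X\otimes\bigl(f^{\boxprod n}/\Sigma_n\bigr)$, precisely the class of morphisms governing the filtration of the free commutative monoid functor in \cite{white-commutative-monoids}.

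Granting this identification, the admissibility statements follow from \cite{white-commutative-monoids}. If $f$ is a cofibration then $f^{\boxprod n}/\Sigma_n$, hence $w_n$, lies in $I^{\otimes}$, and since $I^{\otimes}$ is saturated so does the transfinite composition $U(P)\to U(P')$; if $f$ is a trivial cofibration then the commutative monoid axiom makes $f^{\boxprod n}/\Sigma_n$ a trivial cofibration, and the analysis of \cite{white-commutative-monoids} shows that the corresponding $w_n$ (and the transfinite composition of their pushouts) is a trivial cofibration for pushouts into cofibrant $Gr(Com)$-algebras. Thus the free $Gr(Com)$-functor is $(I^{\otimes},\,I^{\otimes}\cap\WW)$-semi-admissible, and Theorem~\ref{thm:admissible-implies-(semi)-models} produces the transferred semi-model structure on $\int\Phi$. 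If in addition $\M$ satisfies the monoid axiom, then for an \emph{arbitrary} object $X$ the morphism $X\otimes(f^{\boxprod n}/\Sigma_n)$ and every transfinite composition of pushouts of such morphisms is a weak equivalence, so the free functor is genuinely $(I^{\otimes},\,I^{\otimes}\cap\WW)$-admissible and Theorem~\ref{thm:admissible-implies-(semi)-models} upgrades the conclusion to a full transferred model structure.

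The main obstacle is the step identifying the $n$-th layer of the filtration of $U(P)$ with $X\otimes(f^{\boxprod n}/\Sigma_n)$. Because $Gr(Com)$ is neither polynomial nor even cartesian, the $\Sigma_n$-action on the relevant $n$-cube is not free, so the layer colimit is a genuine codescent object rather than an ordinary colimit over a punctured cube, and one must verify that this codescent object computes the symmetric-power quotient --- this is exactly where the explicit filtration of \cite{white-commutative-monoids} and the codescent calculus of \cite{Weber} are imported into the language of classifiers. One must also check that the module color contributes only linearly, so that no spurious symmetrization is introduced in the $m$-direction, and keep track of the two-color bookkeeping of the operad for $Gr(Com)$. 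Once this identification is secured, the remainder of the argument is formal and runs parallel to the proof of Theorem~\ref{thm:quasi-tame-admissible}.
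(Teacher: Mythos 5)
Your overall strategy --- transfer the (semi-)model structure from $\M\times\M$ along the free--forgetful adjunction, use the classifier filtration, and invoke the commutative monoid axiom layer by layer --- matches the paper's, and your discussion of the $r$-direction is essentially right. The genuine gap is in the $m$-direction. You assert that the comparison morphism at the $n$-th layer takes the form $X\otimes\bigl(f^{\boxprod n}/\Sigma_n\bigr)$, and dismiss the module color with ``enters linearly and contributes no extra symmetry.'' But the generating (trivial) cofibration in $\M\times\M$ is a \emph{pair} $(f_1,f_2):(K_1,K_2)\to(L_1,L_2)$, and the module component of the filtration cannot be described by $f_1$-pushout-products alone: the boxed vertex carries its own cofibration $f_2$, which you never analyze.

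The paper's proof deals with this by splitting the critical pushout (\ref{diagram:pushout-Gr(Com)}) into its two components. The $r$-component is exactly the commutative monoid filtration of \cite{white-commutative-monoids}: pushouts of $R\otimes f_1^{\boxprod n}/\Sigma_n$, handled by the (strong) commutative monoid axiom plus (for the full model structure) the monoid axiom. The $m$-component, however, is reduced via \cite[Lemma~3.23]{companion} to a pushout in the category of $\Sym(K_1)$-modules that mixes $f_1$ (extension of scalars along $\Sym(K_1)\to\Sym(L_1)$) with $f_2$ (change of underlying module), and that pushout is controlled by \cite[Lemma~A.3]{white-commutative-monoids} --- a module-category argument, not a symmetric-power argument. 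Equivalently, as in the remark following the paper's proof, the critical pushout factors as a composite of two pushout squares, one living in $\CMon(\M)$ and one in a module category. Your proposal collapses these into a single filtration of the form $X\otimes(f^{\boxprod n}/\Sigma_n)$, which is the wrong shape for the module direction, and so the admissibility statement in the $m$-component is not established. The closing claim that ``the remainder of the argument runs parallel to the proof of Theorem~\ref{thm:quasi-tame-admissible}'' is also not accurate: $Gr(Com)$ is neither polynomial nor quasi-tame, and the two-component split with a subsidiary module-pushout has no analogue in that theorem. To repair the proof, carry out the $\Sym(K_1)$-module pushout explicitly (via \cite[Lemma~3.23]{companion}) and invoke \cite[Lemma~A.3]{white-commutative-monoids} for the module-direction layers.
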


\begin{proof}
The free functor $T:\M \times \M \to \int \Phi$, defined by $T(X,Y) = (\Sym(X),\Sym(X)\otimes Y)$ is left adjoint to the forgetful functor $U:\int \Phi \to \M \times \M$, since $\Sym(X)\otimes Y$ is the free $\Sym(X)$-module on $Y$.

The critical pushout starts with a trivial cofibration $(f_1,f_2):(K_1,K_2)\to (L_1,L_2)$ in $\M \times \M$, and an attaching morphism $(g_1,g_2):(K_1,K_2)\to U(R,A)$. One then takes the following pushout in $\int \Phi$:
\begin{align} \label{diagram:pushout-Gr(Com)}
\xymatrix{ \po
T(K_1,K_2) \ar[r]^{(\beta,b)} \ar[d]_{(\alpha,a)} & T(L_1,L_2) \ar[d]^{(\psi,h)} \\
(R,A) \ar[r]_{(\phi,f)} & (P,B) \\
}
\end{align}
One must show that transfinite compositions of morphisms $(\phi,f):(R,A)\to (P,B)$, obtained by such pushouts, are weak equivalences in $\int \Phi$. 
\cite[Lemma 3.23]{companion} shows how to compute such pushouts. The classifier filtration described above allows us to calculate $(\phi,f)$ as a sequential colimit of pushouts in $\M \times \M$. On the first component, the morphisms in the sequential colimit are pushouts of morphisms of the form $R\otimes f_1^{\boxprod n}/\Sigma_n$, so the commutative monoid axiom guarantees that $\phi$ (and transfinite compositions of such morphisms) is a weak equivalence as soon as either $R$ is cofibrant or $\M$ satisfies the monoid axiom. We turn to the second component. By \cite[Lemma 3.23]{companion}, we must compute the following pushout in $\Sym(K_1)$-mod:
\[
\xymatrix{ \po
\Sym(K_1)\otimes K_2 \ar[r] \ar[d]^{a} & \phi^*(\Sym(L_1) \otimes L_2) \ar[d] \\
\alpha^*(A) \ar[r] & \alpha^* \phi^* B = \beta^* \psi^* B\\}
\]

When we compute this pushout in $\M$, we conclude that $f$ is a trivial cofibration, thanks to \cite[Lemma A.3]{white-commutative-monoids} and hence transfinite compositions of morphisms of the form $(\phi,f)$ arising from (\ref{diagram:pushout-Gr(Com)}) are all weak equivalences, hence \cite[Lemma 2.3]{SS00} (resp. \cite[Theorem 2.2.1]{Reedy-paper}) gives the model (resp. semi-model) structure.
\end{proof}

\begin{remark}
To better understand (\ref{diagram:pushout-Gr(Com)}), it is helpful to factor the pushout square into a composite of two easier pushout squares:

\[
\xymatrix{
(\Sym(K_1),\Sym(K_1)\otimes K_2) \ar[r]^{(\beta,b)} \ar[d]^{(\alpha,\eta)} & (\Sym(L_1),\Sym(L_1)\otimes L_2) \ar[d]^{(\psi,\eta)} \\ 
(R,\phi_!(\Sym(K_1)\otimes K_2)) \ar[r] \ar[d]_{(id,{}^\dashv a)} & (P,\psi_!(\Sym(L_1)\otimes L_2)) \ar[d] \\
(R,A) \ar[r]_{(\phi,f)} & (P,B)\\
}
\]
Since the outer rectangle is a pushout, the bottom square will be a pushout as soon as the top square is proven to be one. This is an easy exercise.

The top pushout is essentially happening in $\CMon(\M)$ and the bottom pushout is essential happening in a module category (it can be computed either in $R$-mod or $P$-mod, by \cite[Lemma 3.23]{companion}. 

The top pushout can be handled by the techniques of \cite{white-commutative-monoids}, since the second component (the module part) plays no role. 
Similarly, analysis of the bottom pushout reduces to second component, which is straightforward, using the methods of Theorem 4.1 of \cite{SS00}. Essentially, this comes down to checking that the monad $T_{\Sym(K_1)}(M) = \Sym(K_1)\otimes M$ has $J_{T_{\Sym(K_1)}}$-cofibrations contained in the weak equivalences. When $\Sym(K_1)$ is a cofibrant commutative monoid and $M$ is cofibrant, this is always true. In general, this is true when the monoid axiom holds.
\end{remark}

\begin{remark}
The proof of Lemma A.1 of \cite{white-commutative-monoids} can be mimicked to prove that it suffices to check (\ref{diagram:pushout-Gr(Com)}) for the \textit{generating} trivial cofibrations $j$ of $\M \times \M$.
\end{remark} 

\begin{remark}
In \cite{harpaz-prasma-integrated}, $\M$ was required to be \textit{commutatively flat} in order to produce the global model structure of Theorem \ref{thm:Gr(Com)-model}. This means that, for every $R$ and every $R$-module $M$, the operation $- \otimes_R M$ is required to take weak equivalences of $R$-modules to weak equivalences in $\M$. Our treatment does not require this condition.
\end{remark}

\begin{remark}
The work in this section also proves that, if $A$ is a commutative monoid, then the category of pairs $(R,M)$ where $R$ is a commutative $A$-algebra and $M$ is an $R$-module, inherits a transferred model structure. For this setting, we work with $A$-modules as our base model category and note that, by \cite{SS00} (Theorem 4.1), this category of $A$-modules satisfies the pushout product axiom and monoid axiom.
\end{remark}

\begin{remark}
A similar approach to Theorem \ref{thm:Gr(Com)-model} may be applied to the Swiss-cheese operad and the category of pairs $(\cat B, \cat M)$ where $\cat B$ is a braided category and $\cat M$ is a monoidal category. Even more generally, it applies to pseudoalgebras of a contractible categorical  $2$-operad with two colors.
\end{remark}

\subsection{Left properness for commutative monoids}

With the filtration of the previous section in hand, it is easy to prove that the category $\int \Phi$ of pairs $(R,M)$ is left proper. For this, we must also assume that $\M$ is $h$-monoidal and we must assume the strong commutative monoid axiom, which is the commutative monoid axiom plus the requirement that, whenever $f$ is a cofibration, then $f^{\boxprod n}/\Sigma_n$ is a cofibration. Under these conditions, $\CMon(\M)$ is left proper \cite[Definition 3.4, Theorem 4.17]{white-commutative-monoids}. We now obtain a relative version, and the relevant results for $\int \Phi$. The following theorem complements \cite[Theorem 4.17]{white-commutative-monoids} and extends \cite[Theorem 3.1]{batanin-berger} to the case of commutative monoids.

\begin{theorem} \label{thm:cmon-left-proper}
Let $\M$ be a compactly generated monoidal model category satisfying the strong commutative monoid axiom and the monoid axiom. Then $\CMon(\M)$ is relatively left proper. If, furthermore, $\M$ is strongly $h$-monoidal, then $\CMon(\M)$ is left proper.
\end{theorem}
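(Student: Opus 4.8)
The plan is to mimic the proof of Theorem~\ref{thm:tame-admissible}, replacing the tame classifier filtration by the explicit filtration of the classifier $Com^{Com_{f,g}}$ described earlier in Section~\ref{sec:non-poly}. Since $\M$ satisfies the monoid axiom, $\CMon(\M)$ already carries a full transferred model structure \cite{white-commutative-monoids}, so left properness is meaningful, and by Theorem~\ref{thm:admissible-implies-(semi)-models} together with Definition~\ref{defn:h-cofibration} it suffices to show that the free commutative monoid functor $\Sym$ sends a cofibration $u\colon K\to L$ of $\M$ to a relative $h$-cofibration of commutative monoids, and to an honest $h$-cofibration when $\M$ is strongly $h$-monoidal. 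Concretely, given a weak equivalence $w\colon R\to S$ of commutative monoids and attaching data $\alpha\colon \Sym(K)\to R$, I must prove that the induced map $R[u,\alpha]\to S[u,w\alpha]$ in the cell extension diagram (\ref{diagram:cell-extension-for-left-proper}) is a weak equivalence: under the cofibrancy hypotheses $U(R),U(S)$ cofibrant in $\M$ in the relative case, and unconditionally in the strongly $h$-monoidal case.

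The key input is the filtration recalled above: the critical pushout (\ref{diagram:pushout}) with $T=\Sym$ computes $R[u,\alpha]$ as a sequential colimit $\colim_n P_n$ in $\M$, where $P_{n-1}\to P_n$ is a pushout along the comparison morphism $w_n\colon Q_n\to L_n$ of the punctured $n$-cube on $u$ modulo its $\Sigma_n$-action, exactly as in Proposition~\ref{filtration}; thus $w_n$ is (a pushout of) a map of the form $R\otimes (u^{\boxprod n}/\Sigma_n)$, and $Q_n$, $L_n$ are built by iterated pointwise tensoring and $\Sigma_n$-quotients from $K$, $L$ and $R$. I would filter $R[u,\alpha]$ and $S[u,w\alpha]$ compatibly in $n$ and argue by induction that $P_n(R)\to P_n(S)$ is a weak equivalence, the base case $P_0(R)=U(R)$ being $w$ itself. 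For the inductive step, form the cube whose front and back faces are the defining pushout squares for $P_n(R)$ and $P_n(S)$ and whose connecting maps are induced by $w$. Given that $P_{n-1}(R)\to P_{n-1}(S)$ is a weak equivalence by induction, it remains to verify that (i) $Q_n(R)\to Q_n(S)$ and $L_n(R)\to L_n(S)$ are weak equivalences, and (ii) the front and back faces are homotopy pushouts; the gluing lemma then finishes the step, and compact generation transports the conclusion to the colimit $P=\colim_n P_n$, giving the desired weak equivalence $R[u,\alpha]\to S[u,w\alpha]$.

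For (ii), the strong commutative monoid axiom is exactly what guarantees that $u^{\boxprod n}/\Sigma_n$ is a cofibration; strong $h$-monoidality (Definition~\ref{defn:h-monoidal}) then makes $R\otimes(u^{\boxprod n}/\Sigma_n)$ an $h$-cofibration, so both faces are genuine homotopy pushouts and we obtain full left properness. In the merely $h$-monoidal case one instead uses that pushout along a cofibration tensored with a cofibrant object preserves weak equivalences between cofibrant objects --- which is what $h$-monoidality together with the cofibrancy of $U(R),U(S)$, and hence of the intermediate $P_k$'s, supplies --- yielding relative left properness. For (i), one checks that $Q_n$ and $L_n$, which are not literal iterated pushout--products but codescent objects for the (generally non-free) $\Sigma_n$-actions, carry the weak equivalence $U(R)\to U(S)$ to weak equivalences; here I would invoke the analysis of the colimits $X\otimes u^{\boxprod n}/\Sigma_n$ from \cite{white-commutative-monoids}, now with the fixed object $X$ replaced by the weak equivalence $U(R)\to U(S)$, using throughout that the intermediate objects remain cofibrant in $\M$. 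When $\M$ is strongly $h$-monoidal, the left-proper conclusion may alternatively be deduced directly from \cite{white-commutative-monoids}; the genuinely new content is the relative statement, which extends \cite[Theorem~3.1]{batanin-berger} to commutative monoids.

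The step I expect to be the main obstacle is (i): outside the cofibrant regime a quotient by a non-free group action is not homotopically controlled, so one must carefully track that every object appearing in the filtrations of $R[u,\alpha]$ and $S[u,w\alpha]$ is cofibrant in $\M$, so that the strong commutative monoid axiom (rather than merely the commutative monoid axiom) applies and the $\Sigma_n$-quotients behave well. This bookkeeping will mirror the arguments in the appendix of \cite{white-commutative-monoids}, but carried out relatively along the weak equivalence $w$ rather than for a single object.
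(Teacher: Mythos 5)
Your proposal is correct and follows essentially the same route as the paper: reduce via Theorem~\ref{thm:admissible-implies-(semi)-models} to showing $\Sym$ sends cofibrations to (relative) $h$-cofibrations, filter the free-commutative-monoid pushout as in \cite{white-commutative-monoids}, form the cube comparing the $R$- and $S$-filtrations, and run the gluing-lemma induction --- with the strong commutative monoid axiom supplying cofibrancy of the $\Sigma_n$-quotients, $h$-monoidality plus cofibrancy of $U(R),U(S)$ handling the relative case, and strong $h$-monoidality handling the absolute case. The one bookkeeping point you flag as a potential obstacle (cofibrancy of all intermediate objects) is dealt with in the paper by invoking \cite[Proposition~2.12]{batanin-berger}, which reduces to cofibrations $u$ with cofibrant domain, exactly as you anticipate.
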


\begin{proof}
The proof proceeds just like \cite[Theorem 4.17]{white-commutative-monoids}, but fewer hypotheses are needed because we must only analyze weak equivalences between underlying cofibrant objects. They key step in the proof of \cite[Theorem 4.17]{white-commutative-monoids} (where all the hypotheses are used) is to study the following cube, where $f: A\to B$ is a weak equivalence (between underlying cofibrant objects in the relative left properness case) and $u: K\to L$ is a cofibration that we are attaching via $\Sym(u)$, $\alpha: K \to U(A)$, and a pushout like (\ref{diagram:pushout}). The morphisms $A[u]^{(n-1)} \to A[u]^{(n)}$ filter $A\to A[u,\alpha]$, and similarly for $B$.

\begin{align*}
\xymatrix{A\otimes Q_n/\Sigma_n \ar[rr] \ar[dd] \ar[dr] && A \otimes L^{\otimes n}/\Sigma_n \ar[dd] \ar[dr] & \\
& A[u]^{(n-1)} \ar[rr] \ar[dd] && A[u]^{(n)} \ar[dd] \\
B\otimes Q_n/\Sigma_n \ar[rr] \ar[dr] && B \otimes L^{\otimes n}/\Sigma_n \ar[dr] & \\
& B[u]^{(n-1)} \ar[rr] && B[u]^{(n)}}
\end{align*}

In the proof of \cite[Theorem 4.17]{white-commutative-monoids}, one assumes $A[u]^{(n-1)} \to B[u]^{(n-1)}$ is a weak equivalence and must deduce the same for $A[u]^{(n)} \to B[u]^{(n)}$. In the relatively left proper case, \cite[Proposition 2.12]{batanin-berger} allows us to start with a cofibration $u: K\to L$ between cofibrant objects, so all objects are cofibrant in this cube, and the two vertical morphisms in the back face are of the form $f\otimes X$ where $f$ is a weak equivalence between cofibrant objects and $X$ is a cofibrant object (such as $Q_n/\Sigma_n$) by the strong commutative monoid axiom. Such morphisms $f\otimes X$ are always weak equivalences, so the induction of \cite[Theorem 4.17]{white-commutative-monoids} proves the relevant pushout of $f$ is a weak equivalence, as required for relative left properness.

If $\M$ is strongly $h$-monoidal then the morphisms $X\otimes f^{\boxprod n}/\Sigma_n$ are $h$-cofibrations (for $X$ either $A$ or $B$). Since $\M$ is strongly $h$-monoidal, the vertical morphisms $f\otimes X$ are weak equivalences and $\M$ is left proper \cite[Lemma 1.12]{batanin-berger}. Thus, the induced morphism $A[u]^{(n)} \to B[u]^{(n)}$ is a weak equivalence, by \cite[Proposition 1.8]{batanin-berger}.
\end{proof}

We now give the analogous result for the Grothendieck construction $\int \Phi$ whose objects are pairs $(R,M)$ where $R$ is a commutative monoid and $M$ is an $R$-module.

\begin{theorem}
Let $\M$ be a compactly generated monoidal model category satisfying the strong commutative monoid axiom and the monoid axiom. Then the model structure of Theorem \ref{thm:Gr(Com)-model} on $\int \Phi$ is relatively left proper. 
Assume either of the following two conditions:
\begin{enumerate}
\item $\M$ is $h$-monoidal, the domains of the generating cofibrations and the monoidal unit are cofibrant, and cofibrant objects are flat (i.e., $X\otimes -$ preserves weak equivalences).
\item $\M$ is strongly $h$-monoidal.
\end{enumerate}
Then $\int \Phi$ is left proper.
\end{theorem}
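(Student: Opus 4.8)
The plan is to mimic the proof of Theorem~\ref{thm:cmon-left-proper}, but now carried out on both coordinates of $\int\Phi$, using the classifier filtration for $Gr(Com)$ described in Section~\ref{sec:non-poly} together with the factorization of the critical pushout~(\ref{diagram:pushout-Gr(Com)}) into a ``commutative-monoid'' piece and a ``module'' piece given in the Remark following Theorem~\ref{thm:Gr(Com)-model}. First I would invoke the characterization of (relative) left properness from Theorem~\ref{thm:admissible-implies-(semi)-models}: it suffices to show that for every cofibration $(u_1,u_2)\colon(K_1,K_2)\to(L_1,L_2)$ in $\M\times\M$ attached via the free functor $T$ of Theorem~\ref{thm:Gr(Com)-model}, every attaching morphism, and every weak equivalence $(f_1,f_2)\colon(A,M)\to(B,N)$ in $\int\Phi$ (with underlying cofibrant objects, in the relative case), the induced morphism $(A,M)[u,\alpha]\to(B,N)[u,f\alpha]$ is a weak equivalence; and by \cite[Proposition~2.12]{batanin-berger} we may assume $(u_1,u_2)$ has cofibrant domain. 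Since weak equivalences in $\int\Phi$ are defined coordinatewise in $\M\times\M$, it is then enough to treat the two coordinates separately.

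For the first coordinate, write $A'$ and $B'$ for the pushouts of $A\leftarrow\Sym(K_1)\to\Sym(L_1)$ and $B\leftarrow\Sym(K_1)\to\Sym(L_1)$ in $\CMon(\M)$. Pasting of pushouts along the cofibration $\Sym(u_1)$ of commutative monoids identifies $A\to A'$ with a cofibration and $A'\to B'$ with the pushout of the weak equivalence $A\to B$ along $A\to A'$, so relative left properness on this coordinate is exactly the relative left properness of $\CMon(\M)$, and left properness follows from Theorem~\ref{thm:cmon-left-proper} when $\M$ is strongly $h$-monoidal (condition~(2)). Under condition~(1) one re-runs the cube of \cite[Theorem~4.17]{white-commutative-monoids} verbatim: the objects $Q_n/\Sigma_n$ and $L_1^{\otimes n}/\Sigma_n$ occurring there are cofibrant by the strong commutative monoid axiom, hence flat, so the back-face vertical maps $f_1\otimes(-)$ are weak equivalences; $h$-monoidality makes the horizontal maps $(-)\otimes u_1^{\boxprod n}/\Sigma_n$ into $h$-cofibrations, so the top and bottom faces are homotopy pushouts; and \cite[Proposition~1.8]{batanin-berger}, together with left properness of $\M$ (\cite[Lemma~1.12]{batanin-berger}), yields that $A'\to B'$ is a weak equivalence.

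For the second (module) coordinate, by \cite[Lemma~3.23]{companion} the module part of the pushout may be computed in $\M$, where it is governed by the monad $\Sym(K_1)\otimes(-)$; the classifier filtration of Section~\ref{sec:non-poly} (a special case of Proposition~\ref{filtration}) then realizes $M\to M'$ and $N\to N'$ as transfinite composites of pushouts of the successive filtration maps of $u_2$ tensored with copies of $\Sym(K_1)$ and cofibrant symmetric powers. The same cube as above now has back-face vertical maps of the form $f_2'\otimes(\text{cofibrant object})$ and $f_1'\otimes(-)$, the latter arising from the change of base monoid $R\to P$; these are weak equivalences under the strong commutative monoid axiom together with flatness of cofibrant objects (condition~(1)) or strong $h$-monoidality (condition~(2)), while $h$-monoidality again makes the top and bottom faces homotopy pushouts. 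Applying \cite[Proposition~1.8]{batanin-berger} stage by stage shows $M'\to N'$ is a weak equivalence, and combining the two coordinates proves the theorem.

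I expect the main obstacle to be the second coordinate: tracking the change of the base commutative monoid from $R$ to $P$ through the module filtration, and verifying that the tensor factors that occur there (namely $R$, $P$, the symmetric powers $L_1^{\otimes n}/\Sigma_n$, and the module inputs) preserve the relevant weak equivalences under the comparatively weak hypotheses of~(1). This is precisely where the cofibrancy of the domains of the generating cofibrations, the cofibrancy of the unit (so that $\Sym(K_1)$ is a cofibrant commutative monoid when $K_1$ is cofibrant), and the flatness of cofibrant objects get used; in the relatively-left-proper statement all objects in sight forget to cofibrant objects, so Ken Brown's lemma makes the needed tensor-preservation automatic and only the (non-strong) commutative monoid axiom and the monoid axiom are required, exactly as in Theorem~\ref{thm:cmon-left-proper}.
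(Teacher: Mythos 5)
Your proposal is correct and lands in the same place as the paper's proof, but you organize the argument differently. The paper works directly with the single filtration coming from the $Gr(Com)^{Gr(Com)_{f,g}}$ classifier, so that the cube from Theorem~\ref{thm:cmon-left-proper} simply reappears with the module factors $K_2$, $L_2$ (and $Q_n/\Sigma_n\otimes K_2$, etc.) glued onto the back face, and the back-face horizontals carry the extra $\otimes u_2$ factor. You instead invoke the two-step factorization of the pushout~(\ref{diagram:pushout-Gr(Com)}) given in the Remark, handle the first coordinate purely inside $\CMon(\M)$ by appealing to Theorem~\ref{thm:cmon-left-proper}, and then run a separate cube for the module coordinate. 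Both approaches are sound; yours is more modular and isolates the change-of-base-ring issue cleanly, whereas the paper's is more parsimonious, reusing one cube rather than two.

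Two small points worth tightening. First, in your module-coordinate cube you describe the filtration as ``successive filtration maps of $u_2$ tensored with copies of $\Sym(K_1)$ and cofibrant symmetric powers,'' but the filtration steps actually involve the mixed terms $u_1^{\boxprod n}/\Sigma_n\otimes(-)$ together with the $K_2\to L_2$ factor and the passage $R\to P$; being explicit about this is exactly where the hypotheses of conditions (1) and (2) enter, and it is where a genuinely new verification is needed beyond Theorem~\ref{thm:cmon-left-proper}. Second, the relative case: you correctly note that once $u$ has cofibrant domain (via \cite[Proposition~2.12]{batanin-berger}) and $(A,M)$, $(B,N)$ forget to cofibrant objects, Ken Brown's lemma makes the tensor maps weak equivalences without needing flatness or strong $h$-monoidality; this matches the paper's observation that in the relative case ``all objects in the cube are cofibrant.'' Overall, a correct proof taking a slightly different but equivalent route.
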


\begin{proof}
The proof of the first part is just like the first part of Theorem \ref{thm:cmon-left-proper}, using the filtration for the pushout (\ref{diagram:pushout-Gr(Com)}). Since we are allowed to take $f = (f_1,f_2)$ as a cofibration between cofibrant objects, all objects in the cube are cofibrant, and the vertical weak equivalences follow in the same way (still $f\otimes X$ but now with $X = Q_n/\Sigma_n \otimes K_2$ and similarly for the $L$-part).

The proof of (1) follows exactly as in \cite[Theorem 4.17]{white-commutative-monoids}, but now carrying around extra terms of the form $K_2$ and $L_2$ that do not affect the proof in any way. Again, they can be taken to be cofibrant because of the hypothesis on $\M$.

The proof of (2) follows exactly like in Theorem \ref{thm:cmon-left-proper}. The horizontal morphisms in the back face are of the form $X\otimes u^{\boxprod n}/\Sigma_n \otimes f_2$ where $X$ is $A$ or $B$. Hence, these morphisms are $h$-cofibrations and the rest of the proof goes in the same way.
\end{proof}

In future work, we plan to study left/right Bousfield localizations of the various Grothendieck model structures we have produced in this paper, along with operad-algebra preservation results after \cite{white-localization, batanin-white-eilenberg-moore, white-yau3, white-yau2, white-yau4}, and hence left/right properness will be very valuable for us.

\end{document}